\numberwithin{equation}{section}
\newtheorem{thm}{Theorem}[section]
\newtheorem{prop}[thm]{Proposition}
\newtheorem{oss}[thm]{Remark}
\newtheorem{cor}[thm]{Corollary}
\def\RR{{\mathbb{R}}}
\def\NN{{\mathbb{N}}}
\newcommand{\norm}[1]{\left\lVert #1 \right\rVert}
\definecolor{green(ryb)}{rgb}{0.173, 0.627, 0.173}
\journal{Journal Name}
\begin{document}

\begin{frontmatter}


\title{Bilinear control of evolution equations with unbounded lower order terms. Application to the Fokker-Planck equation\tnoteref{fund}}
\tnotetext[fund]{This paper was partly supported by the INdAM National Group for Mathematical Analysis, Probability and their Applications (GNAMPA) and by the French-German-Italian Laboratoire International Associ\'{e} (LIA) COPDESC.}


\author{Fatiha Alabau Boussouira}
\address{Laboratoire Jacques-Louis Lions, Sorbonne Universit\'{e}, 75005, Paris, France

Universit\'{e} de Lorraine, 57000 Metz, France

alabau@ljll.math.upmc.fr}
\author{Piermarco Cannarsa} 
\address{Dipartimento di Matematica, Universit\`{a} di Roma Tor Vergata, 00133, Roma, Italy 

cannarsa@mat.uniroma2.it}
\author{Cristina Urbani}
\address{Dipartimento di Matematica, Universit\`{a} di Roma Tor Vergata, 00133, Roma, Italy 

urbani@mat.uniroma2.it}

\begin{abstract}
We study the exact controllability of the evolution equation
\begin{equation*}
u'(t)+Au(t)+p(t)Bu(t)=0
\end{equation*}
where $A$ is a nonnegative self-adjoint operator on a Hilbert space $X$ and $B$ is an unbounded linear operator on $X$, which is dominated by the square root of $A$. The control action is bilinear and only of scalar-input form, meaning that the control is the scalar function $p$, which is assumed to depend only on time. Furthermore, we only consider square-integrable controls. Our main result is the local exact controllability of the above equation to the ground state solution, that is, the evolution through time, of the first eigenfunction of $A$, as initial data. 

The analogous problem (in a more general form) was addressed in our previous paper [Exact controllablity to eigensolutions for evolution equations of parabolic type via bilinear control, Alabau-Boussouira F., Cannarsa P. and Urbani C., Nonlinear Diff. Eq. Appl. (2022)] for a bounded operator $B$. The current extension to unbounded operators allows for many more applications, including the Fokker-Planck equation in one space dimension, and a larger class of control actions.
\end{abstract}
\begin{keyword}
bilinear control \sep evolution equations \sep exact controllability  \sep parabolic PDEs \sep ground state \sep Fokker-Planck equation \sep moment method

\MSC[2010] 35Q93 \sep 93C25 \sep 93C10 \sep 93B05 \sep 35K90
\end{keyword}

\end{frontmatter}


\section{Introduction}
In a series of recent papers (\cite{acu,acue,cu}), we have studied stabilization and exact controllability to eigensolutions for evolution equations of the form
\begin{equation}\label{intro-eq-u}
\begin{cases}
u'(t)+Au(t)+p(t)Bu(t)=0,&t\in(0,T)\\
u(0)=u_0
\end{cases}
\end{equation}
where $u_0$ belongs to a Hilbert space $(X,\langle\cdot,\cdot\rangle,\norm{\cdot})$ and
\begin{itemize}
\item[i)] $A:D(A)\subset X\to X$ is a self-adjoint operator with compact resolvent and such that $A\geq -\sigma I$, with $\sigma\geq0$,
\item[ii)] $B:X\to X$ is a bounded linear operator,
\item[iii)] $p\in L^2(0,T)$ is a bilinear control.
\end{itemize}
We refer to equation \eqref{intro-eq-u} as being of parabolic type, since assumption i) is usually satisfied by parabolic operators.

The scalar-input bilinear controllability problem has been addressed by several authors, starting with the negative result by Ball, Marsden, Slemrod \cite{bms}. Controllability issues are interesting also in the hyperbolic or diffusive context, where several results are now available to describe the reachable set of specific partial differential equations in $1$-D, such as the Schr{\"o}dinger equation (\cite{au,beau,bl}) and the classical (\cite{au,b}) and degenerate (\cite{cmu}) wave equation. The above problem enters in the so-called class of nonlinear control problems. 

We refer the readers to the book by Coron\cite{coron} where general control systems are studied as well as mathematical methods to treat them, with a focus on systems for which the nonlinearities are determinant for controllability issues, including the Schr\"odinger equation.

Most of the above results devoted to scalar-input bilinear controllability issues have in common the fact that they address controllability properties of \eqref{intro-eq-u} near the ground state solution $\psi_1(t)=e^{-\lambda_1 t}\varphi_1$ (\cite{b,bl,cmu}) or, more in general, to eigensolutions $\psi_j(t)=e^{-\lambda_j t}\varphi_j$ (\cite{acu,acue}), that are solutions of the free dynamics ($p=0$) associated to \eqref{intro-eq-u}, namely 
\begin{equation*}
u'(t)+Au(t)=0,
\end{equation*}
with initial condition $u_0=\varphi_j$, where we denote by $\{\lambda_k\}_{k\in\NN^*}$ the eigenvalues of $A$, and by $\{\varphi_k\}_{k\in\NN^*}$ the associated eigenfunctions.

To prove local controllability to trajectories, we have introduced in \cite{acu} the notion of $j$-null controllability in time $T>0$ for the pair $\{A,B\}$: we require the existence of a constant $N_T>0$ such that for any initial condition $y_0\in X$, there exists a control $p\in L^2(0,T)$ satisfying
\begin{equation}\label{estimpnew}
\norm{p}_{L^2(0,T)}\leq N_T\norm{y_0} ,
\end{equation}
and for which $y(T)=0$, where $y(\cdot)$ is the solution of the following linear problem
\begin{equation}\label{newlin}
\begin{cases}
y'(t)+Ay(t)+p(t)B\varphi_j=0,&t\in[0,T]\\
y(0)=y_0.
\end{cases}
\end{equation}
We call the best constant, defined as
\begin{equation}\label{controlcost}
N(T):=\sup_{\norm{y_0}=1}\inf\left\{\norm{p}_{L^2(0,T)}:y(T;y_0,p)=0\right\},
\end{equation}
the \emph{control cost}.

Furthermore, we have shown that sufficient conditions for $\{A,B\}$ to be $j$-null controllable are a gap condition of the eigenvalues of $A$ and a rank condition on $B$: we assume $B$ to spread $\varphi_j$ in all directions (see \cite[Theorem 1.2]{acue}).
When, for instance, $X=L^2(0,1)$, $B$ could be taken as a multiplicative operator, i.e.,
\begin{equation*}
(B\varphi)(x)=\mu(x)\varphi(x),\qquad x\in (0,1),
\end{equation*}
where $\mu$ has to be chosen in order to guarantee the dispersive action mentioned above (see \cite{au} for a general method to construct infinite classes of such $\mu$, including polynomial type classes, and for various PDE's, as well as various boundary conditions). 

Another common feature of the aforementioned references is that $B$ is assumed to be bounded. In many applications, however, one is forced to consider a bilinear control acting on a drift term rather than a potential, which leads to allowing $B$ to be unbounded.

A typical example of such a situation occurs when dealing with the Fokker-Planck equation
\begin{equation*}
u_t-u_{xx}-p(t)(\mu(x)u)_x=0,
\end{equation*}
which is satisfied by the probability density of the diffusion process associated with
\begin{equation*}
dX_t=p(t)\mu(X_t)dt+\sqrt{2}dW_t,
\end{equation*}
where $W_t$ is the standard Wiener process on a probability space $(\Omega, \mathcal{A},\mathbb{P})$.

This paper aims to extend the theory of \cite{acue} to unbounded operators $B:D(B)\subset X\to X$ satisfying
\begin{equation*}
D(A_\sigma^{1/2})\hookrightarrow D(B),\quad \text{and}\quad \norm{B\varphi}\leq C\left(\norm{A_\sigma\varphi}^2+\norm{\varphi}^2\right)^{1/2},
\end{equation*}
for some constant $C>0$, where $A_\sigma:=A+\sigma I$ (the aforementioned $j$-null controllability property of the pair $\{A,B\}$ has to be assumed unchanged).

We consider \eqref{intro-eq-u} where $A:D(A)\subset X\to X$ is a self-adjoint accretive operator with compact resolvent and $B$ is an unbounded linear operator. By adapting the approach of \cite{acue}, we first obtain in Section \ref{MainResult} a local controllability result (in the topology of $D(A^{1/2})$), to the first eigensolution $\psi_1$ (the ground state), see Theorem \ref{teo-contr-B-unb}. Then, in Section \ref{SemiGlobal} we derive two semi-global results, Theorem \ref{teoglobal} and \ref{teoglobal0}. Finally, in Section \ref{Applications} we discuss applications to partial differential equations including
\begin{itemize}
\item[a)] the Fokker-Planck equation,
\item[b)] the heat equation with control on the drift  under Neumann boundary conditions,
\item[c)] a class of degenerate parabolic equations under Dirichlet or Neumann boundary conditions.
\end{itemize}

\section{Preliminaries}
Let $(X,\langle\cdot,\cdot\rangle,\norm{\cdot})$ be a separable Hilbert space. Let $A:D(A)\subset X\to X$ be a densely defined linear operator with the following properties:
\begin{equation}\label{ipA}
\begin{array}{ll}
(a) & A \mbox{ is self-adjoint},\\
(b) &\langle Ax,x\rangle \geq0,\,\, \forall\, x\in D(A),\\
(c) &\exists\,\lambda>0\,:\,(\lambda I+A)^{-1}:X\to X \mbox{ is compact}.
\end{array}
\end{equation}
We recall that under the above assumptions $A$ is a closed operator and $D(A)$ is itself a Hilbert space with respect to the scalar product
\begin{equation}
(x,y)_{D(A)}=\langle x,y\rangle+\langle Ax,Ay\rangle,\quad \forall\,x,y\in D(A).
\end{equation}
Moreover, $-A$ is the infinitesimal generator of a strongly continuous semigroup of contractions on $X$ which is also analytic and will be denoted by $e^{-tA}$.

In view of the above assumptions, there exists an orthonormal basis $\{\varphi_k\}_{k\in\NN^*}$ in $X$ of eigenfunctions of $A$, that is, $\varphi_k\in D(A)$ and $A\varphi_k=\lambda_k\varphi_k$ $\forall\, k \in \NN^*$, where $\{\lambda_k\}_{k\in\NN^*}\subset \RR$ denote the corresponding eigenvalues. We recall that $\lambda_k\geq 0$, $\forall\, k\in\NN^*$ and we suppose --- without loss of generality --- that $\{\lambda_k\}_{k\in\NN^*}$ is ordered so that $0\leq\lambda_k\leq\lambda_{k+1}\to \infty$ as $k\to\infty$.
The associated semigroup has the following representation
\begin{equation}\label{semigr}
e^{-tA}\varphi=\sum_{k=1}^\infty\langle \varphi,\varphi_k\rangle e^{-\lambda_k t}\varphi_k,\quad\forall\, \varphi\in X.
\end{equation}
For any $s\geq 0$ we can define the operator $A^s:D(A^s)\subset X\to X$ which is the $s$-fractional power of $A$ (see \cite{p}). Under our assumptions, such a linear operator is characterized as follows
\begin{equation}
\begin{array}{l}
D(A^s)=\left\{x\in X ~\left|~ \sum_{k\in\NN^*}\lambda_k^{2s}|\langle x,\varphi_k\rangle|^2<\infty\right.\right\}\\\\
A^{s} x=\sum_{k\in\NN^*}\lambda_k^{s}\langle x,\varphi_k\rangle\varphi_k,\qquad \forall\, x\in D(A^s).
\end{array}
\end{equation}
The space $D(A^s)$, equipped with the norm
\begin{equation*}
\norm{x}_{D(A^s)}:=\left(\norm{x}^2+\norm{A^sx}^2\right)^{1/2},\quad\forall\, \varphi\in D(A^s),
\end{equation*}
induced by the scalar product
\begin{equation*}
\langle x,y\rangle_s=\langle x,y\rangle+\langle A^{s}x,A^{s}x\rangle
\end{equation*}
is a Hilbert space for any $s\geq0$. Note that we have trivially the inequality
$$
\norm{x}_{D(A^s)} \leqslant \left(\norm{x}+\norm{A^sx}\right),\quad\forall\, \varphi\in D(A^s)
$$
Of course, the right hand side also defines an equivalent norm to $\norm{\cdot}_{D(A^s)}$ on $D(A^s)$, but is not associated to a scalar product. We will make use of the above inequality in some computations without further referring to it.

We indicate by $B_{R,s}(x)$ the unit ball of radius $R$ with respect to the norm $\norm{\cdot}_{D(A^s)}$ centered at $x$.

Let $T>0$ and consider the following problem
\begin{equation}
\begin{cases}\label{controlsystem}
u'(t)+Au(t)=f(t),&t\in[0,T]\\
u(0)=u_0
\end{cases}
\end{equation}
where $u_0\in X$ and $f\in L^2(0,T;X)$. We now recall two definitions of solution of problem \eqref{controlsystem}:
\begin{itemize}
\item the function $u\in C([0,T], X)$ defined by $$u(t)=e^{-tA}u_0+\int_0^t e^{-(t-s)A}f(s)ds$$ is called the \emph{mild solution} of \eqref{controlsystem},
\item a function 
\begin{equation}\label{012}
u\in H^1(0,T;X)\cap L^2(0,T;D(A))
\end{equation}
is called a \emph{strict solution} of \eqref{controlsystem} if $u(0)=u_0$ and $u$ satisfies the equation in \eqref{controlsystem} for a.e. $t\in [0,T]$.
\end{itemize}
The well-posedness of the Cauchy problem \eqref{controlsystem} is a classical result (see, for instance, \cite[Theorem 3.1, p. 143]{bd}). We observe that the space
\begin{equation*}
W(D(A);X)=H^1(0,T;X)\cap L^2(0,T;D(A))
\end{equation*}
is a Banach space with the norm
\begin{equation*}
\norm{\varphi}_W=\left(\norm{\varphi}_{H^1(0,T;X)}^2+\norm{\varphi}_{L^2(0,T;D(A))}^2\right)^{1/2}
\end{equation*}
for all $\varphi\in W(D(A);X)$.

\begin{thm}\label{maxreg}
Let $u_0\in D(A^{1/2})$ and $f\in L^2(0,T;X)$. Under hypothesis \eqref{ipA}, the mild solution of system \eqref{controlsystem} 
\begin{equation}\label{013}
u(t)=e^{-tA}u_0+\int_0^t e^{-(t-s)A}f(t)dt.
\end{equation}
is a strict solution. 

Moreover, there exists a constant $C>0$ such that
\begin{equation}\label{014}
\norm{u}_W\leq C\left(\norm{f}_{L^2(0,T;X)}+\norm{u_0}_{D(A^{1/2})}\right).
\end{equation}
\end{thm}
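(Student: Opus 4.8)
The plan is to exploit the spectral decomposition of $A$ to reduce the abstract problem to a family of decoupled scalar ODEs, prove a uniform energy estimate for each one, and then reassemble. Writing $u_0=\sum_{k\in\NN^*}a_k\varphi_k$ with $a_k=\langle u_0,\varphi_k\rangle$ and $f(t)=\sum_{k}f_k(t)\varphi_k$ with $f_k(t)=\langle f(t),\varphi_k\rangle$, the components $u_k(t):=\langle u(t),\varphi_k\rangle$ of the mild solution \eqref{013} are given explicitly by $u_k(t)=e^{-\lambda_k t}a_k+\int_0^t e^{-\lambda_k(t-s)}f_k(s)\,ds$, so that each $u_k$ solves the scalar Cauchy problem $u_k'+\lambda_k u_k=f_k$, $u_k(0)=a_k$. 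Since $f_k\in L^2(0,T)$, each $u_k$ lies in $H^1(0,T)$, which makes the manipulations below rigorous mode by mode.

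The key step is the per-mode energy estimate. Multiplying $u_k'+\lambda_k u_k=f_k$ by $u_k'$ and using Young's inequality $f_ku_k'\le\tfrac12|u_k'|^2+\tfrac12|f_k|^2$ gives
\[
\tfrac12|u_k'(t)|^2+\tfrac{\lambda_k}{2}\tfrac{d}{dt}|u_k(t)|^2\le\tfrac12|f_k(t)|^2 .
\]
Integrating over $(0,T)$ and discarding the nonnegative boundary term $\lambda_k|u_k(T)|^2$ yields
\[
\int_0^T|u_k'(t)|^2\,dt\le\int_0^T|f_k(t)|^2\,dt+\lambda_k|a_k|^2 .
\]
Summing over $k\in\NN^*$ and recalling that $\sum_{k}\lambda_k|a_k|^2=\norm{A^{1/2}u_0}^2\le\norm{u_0}_{D(A^{1/2})}^2$ while $\sum_{k}\int_0^T|f_k|^2\,dt=\norm{f}_{L^2(0,T;X)}^2$, this gives $u'\in L^2(0,T;X)$ together with a bound of the required form. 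The term $Au$ is then recovered from the equation through $\lambda_k u_k=f_k-u_k'$, which provides
\[
\int_0^T\norm{Au(t)}^2\,dt=\sum_{k}\int_0^T\lambda_k^2|u_k(t)|^2\,dt\le 2\norm{f}_{L^2(0,T;X)}^2+2\int_0^T\norm{u'(t)}^2\,dt,
\]
so $u\in L^2(0,T;D(A))$. The $L^2(0,T;X)$ bound on $u$ itself follows at once from the contraction property of $e^{-tA}$, and collecting the three estimates produces \eqref{014}.

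The main technical point to handle carefully is the passage from the mode-by-mode identities to statements about the vector-valued function $u$: one must justify the interchange of summation and integration and confirm that the partial sums $S_N(t)=\sum_{k\le N}u_k(t)\varphi_k$ converge in both $\norm{\cdot}_{H^1(0,T;X)}$ and $\norm{\cdot}_{L^2(0,T;D(A))}$. Since every quantity appearing after the energy estimate is nonnegative, this is delivered by monotone convergence — equivalently, by carrying out the estimates on the Galerkin truncations $S_N$ and passing to the limit — and the uniform-in-$N$ bounds above survive the limit, showing that $u$ has the regularity \eqref{012} and coincides with the mild solution. I expect this summation/limiting bookkeeping, rather than the energy inequality itself, to be the only delicate part; as an alternative one could invoke the maximal $L^2$-regularity of the analytic semigroup generated by the self-adjoint operator $A$, but the spectral argument is self-contained in the present setting.
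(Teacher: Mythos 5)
Your proof is correct, but it follows a genuinely different route from the paper, which in fact offers no self-contained argument at all: Theorem \ref{maxreg} is stated there as a classical consequence of maximal $L^2$-regularity for analytic semigroups, with a citation to \cite[Theorem 3.1, p. 143]{bd} and the remark that the regularity \eqref{012} ``is due to the analyticity of $e^{-tA}$.'' You instead exploit the full strength of hypothesis \eqref{ipA} --- self-adjointness, nonnegativity, compact resolvent --- to diagonalize the problem: the scalar ODEs $u_k'+\lambda_k u_k=f_k$, the multiplication by $u_k'$ (correct, including the boundary term $\lambda_k|u_k(T)|^2\geq 0$ you discard and the degenerate case $\lambda_k=0$), the identity $\sum_k\lambda_k|a_k|^2=\norm{A^{1/2}u_0}^2$ which is exactly where $u_0\in D(A^{1/2})$ enters, and the recovery of $Au$ from $\lambda_k u_k=f_k-u_k'$ are all sound, and your closing discussion correctly identifies the one point needing care, namely that the mode-by-mode bounds must be transferred to the vector-valued function via Galerkin truncations $S_N$ (with $S_N'\to u'$ and $AS_N\to Au$ in $L^2(0,T;X)$, using closedness of $A$ to conclude $u(t)\in D(A)$ a.e.). The trade-off between the two approaches is the usual one: the cited semigroup theorem is far more general (it requires only analyticity, not self-adjointness or discreteness of the spectrum, and extends to $L^p$ settings), whereas your spectral argument is elementary, self-contained, yields explicit constants, and makes transparent why the initial datum must lie in $D(A^{1/2})$ rather than merely in $X$; in the specific functional setting of this paper it is a perfectly adequate substitute for the citation.
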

The regularity \eqref{012} of the function $u$ given by \eqref{013} is called \emph{maximal regularity}. Under our assumptions such a property is due to the analyticity of $e^{-tA}$. Observe that \eqref{014} ensures that the strict solution of \eqref{controlsystem} is unique.

We now recall a useful result which derives from spaces interpolation.
\begin{prop}\label{lio-mag}
Let $u\in W(D(A);X)$ then 
\begin{equation*}
u\in C([0,T];D(A^{1/2})).
\end{equation*}
\end{prop}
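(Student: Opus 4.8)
The statement is the classical trace theorem of Lions and Magenes, and I would prove it in two stages: first reduce it to an abstract interpolation fact, then carry out the one identification that is specific to the present operator $A$. The abstract ingredient I would invoke is the following: whenever $V_1\hookrightarrow V_0$ is a continuous dense embedding of Hilbert spaces, the space $\{v:v\in L^2(0,T;V_1),\ v'\in L^2(0,T;V_0)\}$ embeds continuously into $C([0,T];[V_1,V_0]_{1/2})$, where $[V_1,V_0]_{1/2}$ denotes the intermediate space of exponent $1/2$. Applying this with $V_1=D(A)$ and $V_0=X$ --- precisely the regularity encoded in the definition of $W(D(A);X)$, and using that $D(A)$ is dense in $X$ --- yields $u\in C([0,T];[D(A),X]_{1/2})$, so that it only remains to identify the intermediate space.

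This identification is where assumption \eqref{ipA} enters. Since $A$ is self-adjoint and nonnegative, one may replace $A$ by the strictly positive operator $A+I$, which induces equivalent norms on $D(A)$ and on $D(A^{1/2})$; for such an operator one has the classical identity $[D(A),X]_{1/2}=D(A^{1/2})$ with equivalent norms, as may be read off directly from the spectral representation. Combining the two stages gives $u\in C([0,T];D(A^{1/2}))$, which is the assertion.

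Alternatively --- and this is the route I would actually favour, since it uses only the spectral apparatus already set up above --- one can argue by hand through the Fourier coefficients $u_k(t):=\langle u(t),\varphi_k\rangle$. From $u\in W(D(A);X)$ one has $\sum_k(1+\lambda_k^2)\norm{u_k}_{L^2(0,T)}^2<\infty$ and $\sum_k\norm{u_k'}_{L^2(0,T)}^2<\infty$; in particular each $u_k\in H^1(0,T)\hookrightarrow C([0,T])$. The plan is to show that the partial sums $S_N(t):=\sum_{k\le N}u_k(t)\varphi_k$, which are plainly continuous with values in $D(A^{1/2})$, are Cauchy in $C([0,T];D(A^{1/2}))$. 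The crucial step is a pointwise-in-$t$ bound with $k$-summable constants: writing $(1+\lambda_k)|u_k(t)|^2=(1+\lambda_k)|u_k(s)|^2+2(1+\lambda_k)\int_s^t\mathrm{Re}(\overline{u_k}\,u_k')$, averaging over $s\in(0,T)$, and using Cauchy--Schwarz, Young's inequality and the elementary estimate $1+\lambda_k\le\sqrt2\,(1+\lambda_k^2)^{1/2}$, one obtains
\begin{equation*}
\sup_{t\in[0,T]}(1+\lambda_k)|u_k(t)|^2\le C_T\Big((1+\lambda_k^2)\norm{u_k}_{L^2(0,T)}^2+\norm{u_k'}_{L^2(0,T)}^2\Big)=:M_k,
\end{equation*}
with $\sum_kM_k\le C_T\norm{u}_W^2<\infty$. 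Hence $\sup_{[0,T]}\norm{u(t)-S_N(t)}_{D(A^{1/2})}^2\le\sum_{k>N}M_k\to0$, so $S_N\to u$ uniformly in the complete space $C([0,T];D(A^{1/2}))$.

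The main obstacle in both routes is essentially the same fact seen from two angles. In the direct argument the whole weight rests on producing the uniform-in-$t$, $k$-summable bound $M_k$ above; once it is in hand the conclusion is soft. The only additional subtlety is to verify that the $C([0,T];D(A^{1/2}))$-limit of $S_N$ really is $u$ and not merely an a.e. representative: this follows because $u\in W(D(A);X)\subset H^1(0,T;X)\hookrightarrow C([0,T];X)$ is already $X$-continuous and agrees a.e. with the limit. In the interpolation route the same difficulty is compressed entirely into the identification $[D(A),X]_{1/2}=D(A^{1/2})$, i.e. the statement that the intermediate space coincides with the fractional power domain.
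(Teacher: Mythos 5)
Your proposal is correct, and it in fact contains two proofs. The first route is, word for word, what the paper does: the paper gives no self-contained argument but refers to Lions--Magenes \cite{Lions-Magenes} (Proposition 2.1, p.~22 and Theorem 3.1, p.~23), i.e.\ precisely the abstract trace theorem $W(V_1;V_0)\hookrightarrow C([0,T];[V_1,V_0]_{1/2})$ for a dense continuous embedding of Hilbert spaces, combined with the identification $[D(A),X]_{1/2}=D(A^{1/2})$, which is legitimate for the nonnegative self-adjoint operator $A$ (after the harmless replacement of $A$ by $A+I$). Your second, favoured route is genuinely different from the paper and is sound as well: averaging the identity $|u_k(t)|^2=|u_k(s)|^2+2\int_s^t\mathrm{Re}\bigl(\overline{u_k}\,u_k'\bigr)\,dr$ over $s\in(0,T)$ and applying Cauchy--Schwarz and Young's inequality does yield
\begin{equation*}
\sup_{t\in[0,T]}(1+\lambda_k)|u_k(t)|^2\leq C_T\Bigl((1+\lambda_k^2)\norm{u_k}_{L^2(0,T)}^2+\norm{u_k'}_{L^2(0,T)}^2\Bigr),
\end{equation*}
whose sum over $k$ is controlled by $C_T\norm{u}_W^2$; uniform convergence of the partial sums $S_N$ in the complete space $C([0,T];D(A^{1/2}))$ then gives the conclusion, and you correctly dispose of the only delicate point, namely that the uniform limit coincides with the $X$-continuous representative of $u$ furnished by $H^1(0,T;X)\hookrightarrow C([0,T];X)$. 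Comparing the two: the interpolation route is shorter and more general --- it requires no compact resolvent, only a densely embedded Hilbert couple --- which is why the paper can dispatch the proposition with a citation; your spectral route is self-contained, uses only the eigenbasis machinery the paper has already set up under hypothesis \eqref{ipA}, and produces the quantitative estimate $\sup_{t\in[0,T]}\norm{u(t)}_{D(A^{1/2})}\leq C_T\norm{u}_W$ explicitly, which is the form of the statement actually exploited afterwards (e.g.\ in Corollary \ref{cor-lio-mag}). Its price is that it leans on assumption (c) of \eqref{ipA}: without compactness of the resolvent there is no orthonormal eigenbasis, and the argument would have to be recast in terms of spectral measures, whereas the interpolation argument would survive unchanged.
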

We refer to \cite[Proposition 2.1, p. 22 and Theorem 3.1, p. 23]{Lions-Magenes}) for the proof.

From Proposition \ref{lio-mag} we deduce the following regularity property for the solution of problem \eqref{controlsystem}.
\begin{cor}\label{cor-lio-mag}
Let $u_0\in D(A^{1/2})$ and $f\in L^2(0,T;X)$. Then, the strict solution $u$ of \eqref{controlsystem} is such that $u\in C([0,T];D(A^{1/2})$ and there exists $C_0>0$ such that 
\begin{equation}\label{normC0}
\sup_{t\in[0,T]}\norm{u(t)}_{D(A^{1/2})}\leq C_0\left(\norm{f}_{L^2(0,T;X)}+\norm{u_0}_{D(A^{1/2})}\right).
\end{equation}
\end{cor}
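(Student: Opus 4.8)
The plan is to chain together the two results just recalled: the maximal regularity estimate of Theorem \ref{maxreg} and the interpolation embedding of Proposition \ref{lio-mag}. First I would invoke Theorem \ref{maxreg}: since $u_0\in D(A^{1/2})$ and $f\in L^2(0,T;X)$, that theorem guarantees that the mild solution $u$ is in fact a strict solution, so it belongs to $W(D(A);X)=H^1(0,T;X)\cap L^2(0,T;D(A))$ and satisfies estimate \eqref{014}, namely
$$
\norm{u}_W\leq C\big(\norm{f}_{L^2(0,T;X)}+\norm{u_0}_{D(A^{1/2})}\big).
$$
Next, Proposition \ref{lio-mag} applies directly to this $u\in W(D(A);X)$ and yields $u\in C([0,T];D(A^{1/2}))$, which is exactly the continuity assertion of the corollary.

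The only point requiring care, and where I expect the sole real obstacle to lie, is upgrading the set-theoretic inclusion $W(D(A);X)\hookrightarrow C([0,T];D(A^{1/2}))$ stated in Proposition \ref{lio-mag} into a \emph{continuous} embedding, i.e.\ producing a constant $C_1>0$ with
$$
\sup_{t\in[0,T]}\norm{u(t)}_{D(A^{1/2})}\leq C_1\norm{u}_W .
$$
One clean way is to note that $W(D(A);X)$ and $C([0,T];D(A^{1/2}))$ are both Banach spaces and the inclusion is linear; since $W$-convergence controls the pointwise values in $D(A^{1/2})$, the inclusion map has closed graph, so the closed graph theorem furnishes $C_1$. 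Alternatively, the quantitative trace estimate underlying the cited result in \cite{Lions-Magenes} already supplies this constant explicitly, so no genuinely new work is needed.

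Finally I would simply compose the two inequalities. Setting $C_0:=C_1C$ gives
$$
\sup_{t\in[0,T]}\norm{u(t)}_{D(A^{1/2})}\leq C_1\norm{u}_W\leq C_1 C\big(\norm{f}_{L^2(0,T;X)}+\norm{u_0}_{D(A^{1/2})}\big),
$$
which is precisely \eqref{normC0}. In short, the corollary is a formal consequence of Theorem \ref{maxreg} and Proposition \ref{lio-mag}, the entire content being the continuity of the interpolation embedding; all constants are absolute (independent of $u_0$ and $f$), so the stated uniform bound follows at once.
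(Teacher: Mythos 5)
Your argument is exactly the one the paper intends: the corollary is stated as an immediate consequence of the maximal regularity estimate \eqref{014} of Theorem \ref{maxreg} combined with the embedding $W(D(A);X)\hookrightarrow C([0,T];D(A^{1/2}))$ of Proposition \ref{lio-mag}, whose continuity (which you correctly identify as the only point needing justification, and settle via the closed graph theorem or the quantitative trace estimate in \cite{Lions-Magenes}) yields \eqref{normC0} by composing the two bounds. The proposal is correct and matches the paper's approach.
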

Given $T>0$, let $B:D(B)\subset X\to X$ be a linear unbounded operator such that
\begin{equation}\label{ipB}
D(A^{1/2})\hookrightarrow D(B),
\end{equation} 
namely $D(A^{1/2})\subset D(B)$ and there exists a constant $C_B>0$ (that we can suppose, without loss of generality, to be greater than one) such that 
\begin{equation}\label{BA12}
\norm{B\varphi}\leq C_B\norm{\varphi}_{D(A^{1/2})},\qquad \forall\, \varphi\in D(A^{1/2}).
\end{equation}

In the proposition that follows we show the well-posedness of the bilinear control problem with a source term
\begin{equation}\label{a1f}
\begin{cases}
u'(t)+Au(t)+p(t)Bu(t)+f(t)=0,& t\in [0,T]\\
u(0)=u_0.
\end{cases}
\end{equation}
We introduce the following notation: $\forall s\geq0$ we set
\begin{equation*}\begin{array}{l}
\norm{\varphi}_{s}:=\norm{\varphi}_{D(A^s)},\qquad\forall\, \varphi\in D(A^s)\\\\
\norm{f}_{2,s}:=\norm{f}_{L^2(0,T;D(A^s))},\qquad\forall\,f\in L^2(0,T;D(A^s))\\\\
\norm{f}_{\infty,s}:=\norm{f}_{C([0,T];D(A^s))}=\sup_{t\in [0,T]}\norm{A^{s}f(t)},\qquad\forall\, f\in C([0,T];D(A^s)).
\end{array}
\end{equation*}
\begin{prop}\label{propa24}
Let $T>0$ and fix $p\in L^2(0,T)$. Then, for all $u_0\in D(A^{1/2})$ and $f\in L^2(0,T;X)$ there exists a unique mild solution $u$ of \eqref{a1f}.

Moreover, $u\in C([0,T],D(A^{1/2})$ and the following equality holds for every $t\in [0,T]$
\begin{equation}
u(t)=e^{-tA}u_0-\int_0^t e^{-(t-s)A}[p(s)Bu(s)+f(s)]ds.
\end{equation}
Furthermore, $u$ is a strict solution of \eqref{a1f} and there exists a constant $C_1=C_1(p)>0$ such that 
\begin{equation}\label{a5}
||u||_{\infty,1/2}\leq C_1 (||u_0||_{1/2}+||f||_{2,0}).
\end{equation}
\end{prop}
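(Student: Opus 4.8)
The plan is to construct $u$ by a fixed-point argument built on the linear theory of Theorem~\ref{maxreg} and Corollary~\ref{cor-lio-mag}. The crucial observation is that, by the domination hypothesis \eqref{BA12}, the operator $B$ maps $D(A^{1/2})$ boundedly into $X$; hence, whenever $v\in C([0,T];D(A^{1/2}))$ and $p\in L^2(0,T)$, the product $p(\cdot)Bv(\cdot)$ lies in $L^2(0,T;X)$, with $\norm{p\,Bv}_{2,0}\leq C_B\norm{p}_{L^2(0,T)}\norm{v}_{\infty,1/2}$. Thus the right-hand side $-p(\cdot)Bv(\cdot)-f(\cdot)$ is an admissible source for the linear problem \eqref{controlsystem}, and I would define the map $\Phi$ sending $v\in C([0,T];D(A^{1/2}))$ to the strict solution $u=\Phi(v)$ of $u'+Au=-pBv-f$, $u(0)=u_0$. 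By Corollary~\ref{cor-lio-mag}, $\Phi$ takes values in $C([0,T];D(A^{1/2}))$, and a fixed point of $\Phi$ is precisely a mild solution of \eqref{a1f} satisfying the stated representation formula.

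Next I would show that $\Phi$ is a contraction on a suitable interval. Since $\Phi(v_1)-\Phi(v_2)$ solves the linear problem with zero initial datum and source $-p\,B(v_1-v_2)$, estimate \eqref{normC0} gives
\begin{equation*}
\norm{\Phi(v_1)-\Phi(v_2)}_{\infty,1/2}\leq C_0 C_B\norm{p}_{L^2(0,T)}\norm{v_1-v_2}_{\infty,1/2}.
\end{equation*}
The contraction constant $C_0C_B\norm{p}_{L^2(0,T)}$ need not be less than one, which is the main point requiring care. To overcome this, I would exploit the absolute continuity of $t\mapsto\int_0^t|p(s)|^2\,ds$ to partition $[0,T]$ into finitely many subintervals $0=t_0<t_1<\cdots<t_n=T$ on each of which $C_0C_B\norm{p}_{L^2(t_{i-1},t_i)}<1$. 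On $[0,t_1]$ the Banach fixed-point theorem yields a unique solution; taking $u(t_1)\in D(A^{1/2})$ as new initial datum, the same argument on $[t_1,t_2]$ extends it, and after $n$ steps one obtains $u\in C([0,T];D(A^{1/2}))$.

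Once $u$ is obtained, its regularity improves automatically: the source $-p\,Bu-f$ belongs to $L^2(0,T;X)$, so Theorem~\ref{maxreg} shows that $u$ is in fact a strict solution in $W(D(A);X)$, satisfying the equation in \eqref{a1f} for a.e.\ $t$, and the representation formula of the statement follows from the mild-solution formula.

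Finally, for the quantitative bound \eqref{a5} and for uniqueness I would close the estimate by Gronwall's inequality. Writing $M(t)=\sup_{\tau\in[0,t]}\norm{u(\tau)}_{1/2}$ and applying \eqref{normC0} on $[0,t]$ together with the bound $\norm{p\,Bu}_{L^2(0,t;X)}^2\leq C_B^2\int_0^t|p(s)|^2M(s)^2\,ds$, one arrives at
\begin{equation*}
M(t)^2\leq 2C_0^2C_B^2\int_0^t|p(s)|^2M(s)^2\,ds+2C_0^2\big(\norm{f}_{2,0}+\norm{u_0}_{1/2}\big)^2,
\end{equation*}
whence Gronwall gives \eqref{a5} with $C_1=\sqrt{2}\,C_0\exp\!\big(C_0^2C_B^2\norm{p}_{L^2(0,T)}^2\big)$, exhibiting the dependence on $p$. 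The same argument applied to the difference of two solutions (zero initial datum and source) forces it to vanish, giving uniqueness. The only genuine obstacle is the possibly large size of $C_0C_B\norm{p}_{L^2(0,T)}$; both the partition (for existence) and the Gronwall closure (for the estimate) are the devices that neutralize it.
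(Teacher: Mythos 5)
Your proposal is correct, and its core is the same as the paper's: you define the very same fixed-point map $\Phi$ (the paper writes it directly through the mild-solution formula, which by Theorem~\ref{maxreg} agrees with your ``strict solution of the linear problem'' formulation), prove invariance of $C([0,T];D(A^{1/2}))$ via Corollary~\ref{cor-lio-mag}, obtain the identical contraction estimate $C_0C_B\norm{p}_{L^2}\norm{v_1-v_2}_{\infty,1/2}$ from \eqref{normC0}--\eqref{BA12}, and neutralize a large $\norm{p}_{L^2(0,T)}$ by the same subdivision of $[0,T]$ into pieces where the $L^2$ norm of $p$ is small. Where you genuinely diverge is the proof of \eqref{a5} and of uniqueness: the paper closes the estimate by absorption, bounding $\norm{u}_{\infty,1/2}\leq\norm{u_0}_{1/2}+C_0C_B\norm{p}_{L^2(0,T)}\norm{u}_{\infty,1/2}+C_0\norm{f}_{2,0}$ and moving the middle term to the left, which again requires $C_0C_B\norm{p}_{L^2}\leq 1/2$ and hence a second appeal to the subdivision (leaving $C_1(p)$ implicit, assembled interval by interval); you instead run Gronwall on $M(t)^2=\sup_{\tau\le t}\norm{u(\tau)}_{1/2}^2$ with the $L^1$ kernel $|p|^2$, which works in one pass on all of $[0,T]$, produces the explicit constant $C_1=\sqrt{2}\,C_0\exp\bigl(C_0^2C_B^2\norm{p}_{L^2(0,T)}^2\bigr)$, and gives uniqueness for free by applying the same inequality to the difference of two solutions (the paper gets uniqueness from the contraction instead). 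Both closures are sound; yours buys an explicit, global dependence of $C_1$ on $\norm{p}_{L^2(0,T)}$, while the paper's absorption argument avoids Gronwall at the price of re-using the partition.
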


Hereafter, we denote by $C$ a generic positive constant which may differ from line to line. Constants which play a specific role will be distinguished by an index i.e., $C_0$, $C_B$, \dots.

\begin{proof}
The existence and uniqueness of the solution of (\ref{a1f}) comes from a fixed point argument which is based on Theorem \ref{maxreg}.

For any $\xi\in C([0,T];D(A^{1/2})$, let us consider the map 
\begin{equation}\nonumber
\Phi(\xi)(t)=e^{-tA}u_0-\int_0^t e^{-(t-s)A}[p(s)B\xi(s)+f(s)]ds.
\end{equation}
We want to prove that $\Phi$ is a contraction. First, we prove that $\Phi$ maps $C([0,T];D(A^{1/2}))$ into itself. Since $u_0\in D(A^{1/2})$ and $p(\cdot)B\xi(\cdot)$, $f(\cdot)\in L^2(0,T;X)$, applying Theorem \ref{maxreg} and Corollary \ref{cor-lio-mag} it turns out that $\Phi(\xi) \in C([0,T];D(A^{1/2}))$.

Now we have to prove that $\Phi$ is a contraction. Thanks to \eqref{normC0}, we have
\begin{equation}\begin{split}\label{contr}
\norm{\Phi(\xi)-\Phi(\zeta)}_{\infty,1/2}&=\sup_{t\in[0,T]}\norm{\int_0^t e^{-(t-s)A}p(s) B(\xi-\zeta)(s)ds}_{1/2}\\
&\leq C_0\norm{ p B(\xi-\zeta)}_{2,0}\\
&=C_0\left(\int_0^T|p(s)|^2\norm{ B(\xi-\zeta)(s)}^2ds\right)^{1/2}\\
&\leq C_0C_B \left(\int_0^T|p(s)|^2\norm{ (\xi-\zeta)(s)}_{D( A^{1/2})}^2ds\right)^{1/2}\\
&\leq C_0C_B \norm{p}_{L^2(0,T)}\norm{\xi-\zeta}_{\infty,1/2}
\end{split}
\end{equation}
where we have used the fact that $D(A^{1/2}) \hookrightarrow D(B)$.
If $C_0C_B\norm{p}_{L^2(0,T)}<1$, then \eqref{contr} shows that $\Phi$ is a contraction. If this quantity is larger than one, we subdivide the interval $[0,T]$ into $N$ subintervals $[T_0,T_1],[T_1,T_2],\dots,[T_{N-1},T_N]$, with $T_0=0,T_N=T$, such that $C_0C_B\norm{p}_{L^2(T_j,T_{j+1})}\leq 1/2$ in $[T_j,T_{j+1}]$, $\forall\, j=0,\dots, N-1$ and we repeat the contraction argument in each interval.

It remains to prove (\ref{a5}). By using once again \eqref{normC0}, we get
\begin{equation}\begin{split}
\norm{u}_{\infty,1/2}&\leq \norm{u_0}_{1/2}+\sup_{t\in[0,T]}\norm{ \int_0^t e^{-(t-s)A}[p(s) Bu(s)+ f(s)]ds}_{1/2}\\
&\leq \norm{u_0}_{1/2}+C_0(\norm{p Bu}_{2,0}+\norm{f}_{2,0})\\
&= \norm{u_0}_{1/2}+C_0\left( \int_0^T |p(s)|^2\norm{ Bu(s)}^2ds\right)^{1/2}+C_0\norm{f}_{2,0}\\
&\leq \norm{u_0}_{1/2}+C_0C_B\left( \int_0^T |p(s)|^2\norm{ u(s)}_{D(A^{1/2})}^2ds\right)^{1/2}+C_0\norm{f}_{2,0}\\
&= \norm{u_0}_{1/2}+C_0C_B\norm{p}_{L^2(0,T)}\norm{u}_{\infty,1/2}+C_0\norm{f}_{2,0},
\end{split}
\end{equation}
which implies
\begin{equation*}
(1-C_0C_B\norm{p}_{L^2(0,T)})\norm{u}_{\infty,1/2}\leq \norm{u_0}_{1/2}+C_0\norm{f}_{2,0}.
\end{equation*}
If $C_0C_B\norm{p}_{L^2(0,T)}\leq1/2$, then we have inequality (\ref{a5}). Otherwise, to get the conclusion, we proceed subdividing the interval $[0,T]$ into smaller subintervals, as explained above.
\end{proof}

We now consider the following bilinear control problem with a specific source
\begin{equation}\label{v}
\left\{\begin{array}{ll}
v'(t)+A v(t)+p(t)Bv(t)+p(t)B\varphi_1=0,&t\in[0,T]\\\\
v(0)=v_0.
\end{array}\right.
\end{equation}
Let us denote by $v(\cdot;v_0,p)$ the solution of \eqref{v} associated to the initial condition $v_0$ and control $p$. The result that follows provides an estimate of $\sup_{t\in[0,T]}\norm{v(t;v_0,p)}_{1/2}$ in terms of the initial condition $v_0$.

\begin{prop}\label{prop38}
Let $T>0$. Let $A$ and $B$ satisfy hypotheses \eqref{ipA} and \eqref{ipB}, respectively. Let $v_0\in D(A^{1/2})$ and $p\in L^2(0,T)$ be such that 
\begin{equation}\label{NT}
\norm{p}_{L^2(0,T)}\leq N_T\norm{v_0}
\end{equation}
with $N_T$ a positive constant.

Then, the solution of \eqref{v} satisfies
\begin{equation}\label{unifv}
\sup_{t\in[0,T]}\norm{v(t;v_0,p)}^2_{1/2}\leq C_{1,1}(T,\norm{v_0}_{1/2})\norm{v_0}^2_{1/2},
\end{equation}
where 
\begin{multline}\label{C1j}
C_{1,1}(T,\norm{v_0}_{1/2}):=e^{C_BN_T\left(\frac{5}{2}C_BN_T\norm{v_0}_{1/2}+2\sqrt{T}\right)\norm{v_0}_{1/2}+T}\cdot\\
\cdot\left(1+\frac{5}{2}C_B^2(1+\lambda_1^{1/2})^2N_T^2+\frac{3}{2}C^2_BN_T^2\left(C^2_B(1+\lambda_1^{1/2})^2N_T^2+1\right)\norm{v_0}^2_{1/2}\right)
\end{multline}
\end{prop}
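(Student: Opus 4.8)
The plan is to establish \eqref{unifv} by an energy argument followed by Gr\"onwall's inequality, carried out in two stages so as to decouple the $X$-norm of $v$ from its $D(A^{1/2})$-norm. First I would invoke Proposition \ref{propa24} with source term $f=pB\varphi_1$ (which lies in $L^2(0,T;X)$ since $\varphi_1\in D(A^{1/2})\subset D(B)$ and $p\in L^2(0,T)$) to obtain the unique strict solution $v\in W(D(A);X)\cap C([0,T];D(A^{1/2}))$ of \eqref{v}. This regularity, in particular $v\in L^2(0,T;D(A))$ and $v'\in L^2(0,T;X)$, is what legitimizes testing the equation against both $v$ and $Av$, together with the absolute continuity identities $\frac{d}{dt}\norm{v}^2=2\langle v',v\rangle$ and $\frac{d}{dt}\norm{A^{1/2}v}^2=2\langle v',Av\rangle$ for a.e.\ $t$, the latter being the classical Lions--Magenes regularity underlying Proposition \ref{lio-mag}.

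In the first stage I would test \eqref{v} with $v$, using $\langle Av,v\rangle=\norm{A^{1/2}v}^2\geq0$, to get $\frac12\frac{d}{dt}\norm{v}^2+\norm{A^{1/2}v}^2=-p\langle Bv,v\rangle-p\langle B\varphi_1,v\rangle$. Bounding the right-hand side by Cauchy--Schwarz and \eqref{BA12} (so $\norm{Bv}\leq C_B\norm{v}_{1/2}$ and $\norm{B\varphi_1}\leq C_B(1+\lambda_1^{1/2})$), and using Young's inequality, the factor $\norm{A^{1/2}v}$ hidden inside $\norm{v}_{1/2}$ is absorbed into the dissipation $\norm{A^{1/2}v}^2$ on the left. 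What remains is a Bernoulli-type inequality for $g=\norm{v}^2$, namely $g'\leq(2C_B|p|+C_B^2p^2)g+2C_B(1+\lambda_1^{1/2})|p|g^{1/2}$, which linearizes under the substitution $h=g^{1/2}=\norm{v}$. Gr\"onwall's inequality then yields an explicit bound $\sup_{[0,T]}\norm{v(t)}\leq M\norm{v_0}$, where $M$ is exponential in $\int_0^T(C_B|p|+\tfrac12 C_B^2p^2)$; the crucial input is \eqref{NT}, which with Cauchy--Schwarz gives $\norm{p}_{L^1(0,T)}\leq\sqrt{T}\,N_T\norm{v_0}$ and $\norm{p}_{L^2(0,T)}^2\leq N_T^2\norm{v_0}^2$, so that $M$ depends only on $T,N_T,C_B,\lambda_1$ and $\norm{v_0}$.

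In the second stage I would test \eqref{v} with $Av$, obtaining $\frac12\frac{d}{dt}\norm{A^{1/2}v}^2+\norm{Av}^2=-p\langle Bv,Av\rangle-p\langle B\varphi_1,Av\rangle$. Estimating each term by Young's inequality in the form $|p|C_B\norm{v}_{1/2}\norm{Av}\leq\frac14\norm{Av}^2+p^2C_B^2\norm{v}_{1/2}^2$, and analogously for the $\varphi_1$ term, both copies of $\norm{Av}^2$ are absorbed on the left. Writing $\norm{v}_{1/2}^2=\norm{v}^2+\norm{A^{1/2}v}^2$ and replacing $\norm{v}^2$ by the bound $M^2\norm{v_0}^2$ from the first stage converts the estimate into a linear inequality for $G=\norm{A^{1/2}v}^2$, of the form $G'\leq 2C_B^2p^2\,G+\psi(t)$ with explicit forcing $\psi(t)=2C_B^2p^2\big(M^2\norm{v_0}^2+(1+\lambda_1^{1/2})^2\big)$. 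A second application of Gr\"onwall's inequality, again using \eqref{NT} to control $\int_0^Tp^2$ and $\int_0^T\psi$, bounds $\sup_{[0,T]}\norm{A^{1/2}v(t)}^2$; adding this to the first-stage bound for $\sup\norm{v}^2$ gives \eqref{unifv}.

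The main obstacle is the genuine coupling between the two energy levels created by the fact that $B$ is only dominated by $A^{1/2}$: the term $\langle Bv,Av\rangle$ forces the absorption of a full power of $\norm{Av}$, while $\norm{Bv}$ can be controlled only by $\norm{v}_{1/2}$ and not by $\norm{v}$, so a single energy estimate does not close. The two-stage scheme resolves this by producing a self-contained $X$-bound and feeding it into the $D(A^{1/2})$-estimate. The price is that the second-stage forcing contains the term $2C_B^2p^2M^2\norm{v_0}^2$, whose time integral is of order $N_T^2\norm{v_0}^2\cdot\norm{v_0}^2$; this quartic dependence on $\norm{v_0}$ is precisely what produces the explicit factor $\norm{v_0}_{1/2}^2$ inside the constant $C_{1,1}$ in \eqref{C1j}, both in the exponent and in the last summand of the prefactor. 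Beyond this structural point, deriving the exact numerical constants in \eqref{C1j} is a matter of careful bookkeeping of the Young parameters and of the two Gr\"onwall exponents, which I would carry out but not reproduce in detail.
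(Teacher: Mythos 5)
Your proposal is correct in substance and follows the same skeleton as the paper's proof: a first energy estimate at the $X$-level obtained by testing \eqref{v} with $v$, absorbing the $\norm{A^{1/2}v}$ hidden in $\norm{Bv}\leq C_B\norm{v}_{1/2}$ into the dissipation and applying Gronwall; then a second estimate for $\norm{A^{1/2}v}^2$ whose forcing is controlled by the first-stage bound, again closed by Gronwall; finally \eqref{NT} converts every occurrence of $\norm{p}_{L^2(0,T)}$ into $N_T\norm{v_0}$. There are two genuine, though minor, deviations. First, in the second stage the paper multiplies the equation by $v'$ rather than by $Av$, using the absolute continuity of $t\mapsto\langle Av(t),v(t)\rangle$ and absorbing three copies of $\frac13\norm{v'}^2$; your pairing with $Av$ is equally legitimate (the strict-solution regularity $v\in H^1(0,T;X)\cap L^2(0,T;D(A))$ supports both), and if you take Young in the form $ab\leq\frac34 a^2+\frac13 b^2$ with $b=\norm{Av}$ you arrive at literally the same differential inequality $\frac{d}{dt}\norm{A^{1/2}v}^2\leq\frac32 C_B^2|p|^2\left(\norm{v}^2+(1+\lambda_1^{1/2})^2+\norm{A^{1/2}v}^2\right)$, hence the same constants; with your stated choice $\frac14\norm{Av}^2+C_B^2p^2(\cdots)$ you would instead get $2C_B^2$ in place of $\frac32 C_B^2$. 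Second, and more importantly for the literal statement: in the first stage the paper does not linearize via $h=\norm{v}$; it bounds $|p|\norm{B\varphi_1}\norm{v}\leq\frac{C_B^2}{2}(1+\lambda_1^{1/2})^2|p|^2+\frac12\norm{v}^2$ (this is the origin of the additive $T$ in the exponent of \eqref{C1j}) and runs Gronwall on $\norm{v}^2$ directly. Your Bernoulli substitution is a valid alternative, but it produces a bound of the form $e^{\cdots}\left(\norm{v_0}+C_B(1+\lambda_1^{1/2})\norm{p}_{L^1(0,T)}\right)^2$, which after expanding the square is not dominated by the paper's expression (for small $T$ it carries a factor $2$ in front of $\norm{v_0}^2$ against the paper's $e^T$); so if the target is the exact constant $C_{1,1}$ of \eqref{C1j}, rather than a constant of the same shape and strength, you should treat the $B\varphi_1$ term by Young as the paper does. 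Apart from this bookkeeping point, your structural remarks --- why a single energy estimate cannot close when $B$ is only $A^{1/2}$-bounded, and where the quadratic and quartic powers of $\norm{v_0}_{1/2}$ inside $C_{1,1}$ originate --- agree with the paper's argument.
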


\begin{proof}
For the sake of compactness, sometimes we will denote the solution of \eqref{v}, by omitting the reference to the initial condition and the control, as $v(\cdot)$. We perform energy estimates of the equation satisfied by $v$ by taking first the scalar product with $v(t)$
\begin{equation*}
\langle v^\prime(t),v(t) \rangle+\langle Av(t),v(t) \rangle+p(t)\langle Bv(t)+B\varphi_1,v(t) \rangle=0,
\end{equation*}
from which we have that
\begin{equation}\label{barC}
\begin{split}
\frac{1}{2}&\frac{d}{dt}\norm{v(t)}^2+\norm{A^{1/2}v(t)}^{2}\leq |p(t)|\norm{Bv(t)}\norm{v(t)}+|p(t)|\norm{B\varphi_1}\norm{v(t)}\\
&\leq C_B|p(t)|\left(\norm{v(t)}^2+\norm{A^{1/2}v(t)}^2\right)^{1/2}\norm{v(t)}+C_B(1+\lambda_1^{1/2})|p(t)|\norm{v(t)}\\
&\leq C_B|p(t)|\norm{v(t)}^2+\frac{1}{2}\norm{A^{1/2}v(t)}^2+\frac{C^2_B}{2}|p(t)|^2\norm{v(t)}^2+\frac{C^2_B}{2}(1+\lambda_1^{1/2})^2|p(t)|^2+\frac{1}{2}\norm{v(t)}^2.
\end{split}
\end{equation}
Therefore, from the above inequality it follows that
\begin{equation*}
\frac{1}{2}\frac{d}{dt}\norm{v(t)}^2\leq \left(C_B|p(t)|+\frac{C^2_B}{2}|p(t)|^2+\frac{1}{2}\right)\norm{v(t)}^2+\frac{C^2_B}{2}(1+\lambda_1^{1/2})^2|p(t)|^2.
\end{equation*}
We integrate from $0$ to $t$ and thanks to Gronwall's Lemma we deduce that
\begin{equation}\label{supv}
\sup_{t\in[0,T]}\norm{v(t;v_0,p)}^2\leq\left(\norm{v_0}^2+C_B^2(1+\lambda_1^{1/2})^2\norm{p}^2_{L^2(0,T)}\right)e^{C_2(T)},
\end{equation}
where $C_2(T):=2C_B\sqrt{T}\norm{p}_{L^2(0,T)}+C^2_B\norm{p}_{L^2(0,T)}^2+T$.

Now, we multiply the equation in \eqref{v} by $v^\prime(t)$ and we obtain
\begin{equation*}
\langle v'(t),v'(t) \rangle+\langle Av(t),v'(t) \rangle+p(t)\langle Bv(t)+B\varphi_1,v'(t) \rangle=0.
\end{equation*}
We recall that under our assumptions on $A$ the function
\begin{equation*}
t\mapsto \langle Av(t),v(t)\rangle
\end{equation*}
is absolutely continuous on $[0,T]$ and
\begin{equation*}
\frac{d}{dt}\langle Av(t),v(t)\rangle=2\langle Av(t),v'(t)\rangle.
\end{equation*}
Therefore, we have
\begin{equation*}
\begin{split}
\norm{v'(t)}^2+\frac{1}{2}\frac{d}{dt}\norm{A^{1/2}v(t)}^{2}&\leq |p(t)|\norm{Bv(t)}\norm{v'(t)}+|p(t)|\norm{B\varphi_1}\norm{v'(t)}\\
&\leq C_B|p(t)|\left(\norm{v(t)}+\norm{A^{1/2}v(t)}\right)\norm{v'(t)}+C_B|p(t)|(1+\lambda_1^{1/2})\norm{v'(t)}\\
&\leq \frac{3}{4}C^2_B|p(t)|^2\norm{v(t)}^2+\frac{1}{3}\norm{v'(t)}^2+\frac{3}{4}C^2_B|p(t)|^2\norm{A^{1/2}v(t)}^2\\
&\quad+\frac{1}{3}\norm{v'(t)}^2+\frac{3}{4}C^2_B(1+\lambda_1^{1/2})^2|p(t)|^2+\frac{1}{3}\norm{v'(t)}^2,
\end{split}
\end{equation*}
that gives
\begin{equation*}
\frac{d}{dt}\norm{A^{1/2}v(t)}^{2}\leq\left(\frac{3}{2}C^2_B|p(t)|^2\norm{v(t)}^2+\frac{3}{2}C^2_B(1+\lambda_1^{1/2})^2|p(t)|^2\right)+\frac{3}{2}C^2_B|p(t)|^2\norm{A^{1/2}v(t)}^2.
\end{equation*}
By Gronwall's Lemma and using the previous energy estimate \eqref{supv}, we deduce that
\begin{equation*}
\begin{split}
&\norm{A^{1/2}v(t)}^{2}\leq\left(\norm{A^{1/2}v_0}^{2}+\frac{3}{2}C^2_B\int_0^t|p(s)|^2\norm{v(s)}^2ds+\frac{3}{2}C^2_B(1+\lambda_1^{1/2})^2\int_0^t|p(s)|^2ds\right)e^{\frac{3}{2}C^2_B\int_0^t|p(s)|^2ds}\\
&\quad\leq \left(\norm{A^{1/2}v_0}^2+\frac{3}{2}C^2_B\norm{p}^2_{L^2(0,T)}\sup_{t\in[0,T]}\norm{v(t)}^2+\frac{3}{2}C^2_B(1+\lambda_1^{1/2})^2\norm{p}^2_{L^2(0,T)}\right)e^{\frac{3}{2}C^2_B\norm{p}^2_{L^2(0,T)}}\\
&\quad\leq \left(\norm{A^{1/2}v_0}^2+\frac{3}{2}C^2_B\norm{p}^2_{L^2(0,T)}\left(\norm{v_0}^2+C_B^2(1+\lambda_1^{1/2})^2\norm{p}^2_{L^2(0,T)}\right)e^{C_2(T)}\right.\\
&\qquad\qquad\qquad\qquad\qquad\qquad\qquad\qquad\qquad\qquad\qquad\left.+\frac{3}{2}C^2_B(1+\lambda_1^{1/2})^2\norm{p}^2_{L^2(0,T)}\right)e^{\frac{3}{2}C^2_B\norm{p}^2_{L^2(0,T)}}\\
&\quad\leq\left(\norm{A^{1/2}v_0}^2+\frac{3}{2}C^2_B\norm{p}^2_{L^2(0,T)}\norm{v_0}^2+\frac{3}{2}C^4_B(1+\lambda_1^{1/2})^2\norm{p}^4_{L^2(0,T)}\right.\\
&\qquad\qquad\qquad\qquad\qquad\qquad\qquad\qquad\qquad\qquad\qquad\left.+\frac{3}{2}C^2_B(1+\lambda_1^{1/2})^2\norm{p}^2_{L^2(0,T)}\right)e^{C_3(T)}
\end{split}
\end{equation*}
with $C_3(T):=\frac{3}{2}C^2_B\norm{p}^2_{L^2(0,T)}+C_2(T)$.

Taking the supremum over the interval $[0,T]$, we have that
\begin{multline}\label{supA1/2v}
\sup_{t\in[0,T]}\norm{A^{1/2}v(t)}^{2}\leq\left(\norm{A^{1/2}v_0}^2+\frac{3}{2}C^2_B\norm{p}^2_{L^2(0,T)}\norm{v_0}^2+\frac{3}{2}C^4_B(1+\lambda_1^{1/2})^2\norm{p}^4_{L^2(0,T)}\right.\\
\left.+\frac{3}{2}C^2_B(1+\lambda_1^{1/2})^2\norm{p}^2_{L^2(0,T)}\right)e^{C_3(T)}.
\end{multline}
Finally, combining \eqref{supv} and \eqref{supA1/2v}, we find that
\begin{equation*}
\begin{split}
\sup_{t\in[0,T]}\norm{v(t)}^2_{1/2}&\leq \sup_{t\in[0,T]}\norm{v(t)}^2+\sup_{t\in[0,T]}\norm{A^{1/2}v(t)}^2\\
&\leq e^{C_3(T)}\left(\norm{v_0}^2_{1/2}+\frac{5}{2}C_B^2(1+\lambda_1^{1/2})^2\norm{p}^2_{L^2(0,T)}+\frac{3}{2}C^2_B\norm{p}^2_{L^2(0,T)}\norm{v_0}^2\right.\\
&\,\,\,\qquad\qquad\qquad\qquad\qquad\qquad\qquad\qquad\qquad\qquad\left.+\frac{3}{2}C^4_B(1+\lambda_1^{1/2})^2\norm{p}^4_{L^2(0,T)}\right)
\end{split}
\end{equation*}
and using estimate \eqref{NT} we conclude that
\begin{equation*}
\sup_{t\in[0,T]}\norm{v(t)}^2_{1/2}\leq e^{C_3(T)}\left(1+\frac{5}{2}C_B^2(1+\lambda_1^{1/2})^2N_T^2+\frac{3}{2}C^2_BN_T^2\left(C^2_B(1+\lambda_1^{1/2})^2N_T^2+1\right)\norm{v_0}^2_{1/2}\right)\norm{v_0}^2_{1/2}.
\end{equation*}
Thus, we get \eqref{unifv} with $C_{1,1}(T,\norm{v_0})$ defined in \eqref{C1j}.
\end{proof}

We now turn our attention to the following control problem
\begin{equation}\label{w}
\left\{\begin{array}{ll}
w'(t)+Aw(t)+p(t)Bv(t)=0,&t\in[0,T]\\\\
w(0)=0
\end{array}\right.
\end{equation}
where $v$ solves \eqref{v}. We will denote by $w(\cdot;0,p)$ the solution of \eqref{w}. In the result that follows we give a quadratic estimate of $w(\cdot;0,p)$ in terms of the initial condition of the problem solved by $v$.

\begin{prop}\label{prop39}
Let $T>0$ and let $A$ and $B$ satisfy hypotheses \eqref{ipA} and \eqref{ipB}, respectively. Let $p\in L^2(0,T)$ satisfy \eqref{NT} with $N_T=:N(T)$ and $v_0\in D(A^{1/2})$ be such that
\begin{equation}\label{v0}
N(T)\norm{v_0}_{1/2}\leq 1.
\end{equation}
Then, it holds that
\begin{equation}\label{wT}
\norm{w(T;0,p)}_{1/2}\leq K(T)\norm{v_0}_{1/2}^2,
\end{equation}
where
\begin{equation}\label{K}
K(T)^2:=2e^{C_4(T)}C^2_BN(T)^2C_5(T)
\end{equation}
with
\begin{equation*}
C_4(T):=C_B\left(\frac{5}{2}C_B+2\sqrt{T}\right)+2T,
\end{equation*}
\begin{equation*}
C_5(T)=1+\frac{5}{2}C^2_B(1+\lambda_1^{1/2})^2N(T)^2+\frac{3}{2}C^2_B\left(C^2_B(1+\lambda_1^{1/2})^2N(T)^2+1\right).
\end{equation*}
\end{prop}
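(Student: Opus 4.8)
The plan is to view \eqref{w} as an instance of the linear Cauchy problem \eqref{controlsystem} with zero initial datum and source term $f=-pBv\in L^2(0,T;X)$, and to bound $\norm{w(T)}_{1/2}$ by reproducing, on the simpler equation for $w$, the two energy estimates already performed for $v$ in the proof of Proposition \ref{prop38}. The key point is that the right-hand side of those estimates is controlled by the quantity $\int_0^T|p(s)|^2\norm{Bv(s)}^2\,ds$, for which Proposition \ref{prop38} provides exactly the bound we need.

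First I would carry out the energy estimates directly on \eqref{w}. Pairing the equation with $w(t)$ gives
\[
\frac12\frac{d}{dt}\norm{w}^2+\norm{A^{1/2}w}^2=-p\langle Bv,w\rangle\le \frac12|p|^2\norm{Bv}^2+\frac12\norm{w}^2;
\]
dropping the nonnegative term $\norm{A^{1/2}w}^2$ and invoking Gronwall's lemma together with $w(0)=0$ yields $\norm{w(T)}^2\le e^{T}\int_0^T|p|^2\norm{Bv}^2\,ds$. Pairing instead with $w'(t)$ and using $\frac{d}{dt}\langle Aw,w\rangle=2\langle Aw,w'\rangle$ gives
\[
\norm{w'}^2+\frac12\frac{d}{dt}\norm{A^{1/2}w}^2\le|p|\norm{Bv}\norm{w'}\le\frac12\norm{w'}^2+\frac12|p|^2\norm{Bv}^2,
\]
so that, after absorbing $\norm{w'}^2$ and integrating, $\norm{A^{1/2}w(T)}^2\le\int_0^T|p|^2\norm{Bv}^2\,ds$. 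Adding the two bounds and using $e^T+1\le 2e^T$ produces
\[
\norm{w(T)}_{1/2}^2\le 2e^{T}\int_0^T|p(s)|^2\norm{Bv(s)}^2\,ds.
\]

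It then remains to estimate $I:=\int_0^T|p|^2\norm{Bv}^2\,ds$. Using the domination \eqref{BA12}, $\norm{Bv(s)}\le C_B\norm{v(s)}_{1/2}$, and then the control bound \eqref{NT} in the form $\norm{p}_{L^2(0,T)}\le N(T)\norm{v_0}\le N(T)\norm{v_0}_{1/2}$, I would factor out the supremum of $v$ to get $I\le C_B^2N(T)^2\norm{v_0}_{1/2}^2\sup_{t\in[0,T]}\norm{v(t)}_{1/2}^2$. Since the hypotheses of Proposition \ref{prop38} hold with $N_T=N(T)$, that result bounds the supremum by $C_{1,1}(T,\norm{v_0}_{1/2})\norm{v_0}_{1/2}^2$, whence $I\le C_B^2N(T)^2C_{1,1}(T,\norm{v_0}_{1/2})\norm{v_0}_{1/2}^4$.

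The main obstacle — really the only nontrivial bookkeeping — is to turn the unwieldy constant $C_{1,1}$ defined in \eqref{C1j} into the clean factors $C_4(T)$ and $C_5(T)$. Here the smallness hypothesis \eqref{v0}, i.e.\ $N(T)\norm{v_0}_{1/2}\le 1$, is essential: it lets me replace every occurrence of $N(T)^2\norm{v_0}_{1/2}^2$ and $N(T)\norm{v_0}_{1/2}$ by $1$. In the exponent of \eqref{C1j} this gives $C_BN(T)\bigl(\tfrac52 C_BN(T)\norm{v_0}_{1/2}+2\sqrt T\bigr)\norm{v_0}_{1/2}+T\le C_B(\tfrac52 C_B+2\sqrt T)+T$, while in the polynomial prefactor the last, quadratic-in-$\norm{v_0}_{1/2}$ term collapses so that the whole prefactor is bounded by $C_5(T)$. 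Multiplying by the leftover $2e^T$ and noting that $e^T\cdot e^{C_B(\frac52 C_B+2\sqrt T)+T}=e^{C_4(T)}$, I arrive at $\norm{w(T)}_{1/2}^2\le 2e^{C_4(T)}C_B^2N(T)^2C_5(T)\norm{v_0}_{1/2}^4=K(T)^2\norm{v_0}_{1/2}^4$, which is \eqref{wT}.
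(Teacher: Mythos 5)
Your proof is correct and follows essentially the same route as the paper: the two energy estimates (pairing the equation for $w$ with $w$ and with $w'$), Gronwall's lemma, the bound from Proposition \ref{prop38} combined with \eqref{NT}, and finally the smallness hypothesis \eqref{v0} to absorb the $\norm{v_0}_{1/2}$-dependent parts of $C_{1,1}$ into $C_4(T)$ and $C_5(T)$. The only differences are cosmetic (you estimate at $t=T$ rather than taking a supremum, and you apply $\norm{Bv}\leq C_B\norm{v}_{1/2}$ later), and your explicit bookkeeping of how $C_{1,1}$ collapses to $e^{C_4(T)}C_5(T)$ fills in a step the paper leaves implicit.
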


\begin{proof} 
For the sake of compactness, sometimes we will denote the solution of \eqref{w} by $w(\cdot)$. First, we multiply the equation in \eqref{w} by $w(t)$, and we get
\begin{equation*}
\langle w'(t),w(t)\rangle+\langle Aw(t),w(t)\rangle+p(t)\langle Bv(t),w(t)\rangle=0,
\end{equation*}
that implies
\begin{equation*}
\begin{split}
\frac{1}{2}\frac{d}{dt}\norm{w(t)}^2&\leq|p(t)|\norm{Bv(t)}\norm{w(t)}\\
&\leq C_B|p(t)|\norm{v(t)}_{1/2}\norm{w(t)}\\
&\leq \frac{C^2_B}{2}|p(t)|^2\norm{v(t)}^2_{1/2}+\frac{1}{2}\norm{w(t)}^2.
\end{split}
\end{equation*}
Applying Gronwall's Lemma we obtain
\begin{equation*}
\norm{w(t)}^2\leq C^2_Be^t\int_0^t|p(s)|^2\norm{v(s)}_{1/2}^2ds,
\end{equation*}
and therefore
\begin{equation}\label{supw}
\sup_{t\in[0,T]}\norm{w(t)}^2\leq C_B^2e^T\norm{p}^2_{L^2(0,T)}\sup_{t\in[0,T]}\norm{v(t)}_{1/2}^2.
\end{equation}

Now, we multiply the equation in \eqref{w} by $w^\prime(t)$,
\begin{equation*}
\langle w'(t),w'(t)\rangle+\langle Aw(t),w'(t)\rangle+p(t)\langle Bv(t),w'(t)\rangle=0,
\end{equation*}
from which we deduce
\begin{equation*}
\begin{split}
\norm{w'(t)}^2+\frac{1}{2}\frac{d}{dt}\norm{A^{1/2}w(t)}^2&\leq |p(t)|\norm{Bv(t)}\norm{w'(t)}\\
&\leq C_B|p(t)|\norm{v(t)}_{1/2}\norm{w'(t)}\\
&\leq \frac{C^2_B}{2}|p(t)|^2\norm{v(t)}_{1/2}^2+\frac{1}{2}\norm{w'(t)}^2.
\end{split}
\end{equation*}
Therefore, it holds that
\begin{equation*}
\frac{d}{dt}\norm{A^{1/2}w(t)}^2\leq C^2_B|p(t)|^2\norm{v(t)}_{1/2}^2,
\end{equation*}
that yields the following estimate
\begin{equation}\label{supA1/2w}
\sup_{t\in[0,T]}\norm{A^{1/2}w(t)}^2\leq C^2_B\norm{p}^2_{L^2(0,T)}\sup_{t\in[0,T]}\norm{v(t)}_{1/2}^2.
\end{equation}

Combining \eqref{supw} and \eqref{supA1/2w} we have
\begin{equation*}
\begin{split}
\sup_{t\in[0,T]}\norm{w(t)}^2_{1/2}&\leq \sup_{t\in[0,T]}\norm{w(t)}^2+\sup_{t\in[0,T]}\norm{A^{1/2}w(t)}^2\\
&\leq 2e^TC^2_B\norm{p}^2_{L^2(0,T)}\sup_{t\in[0,T]}\norm{v(t)}_{1/2}^2
\end{split}
\end{equation*}
and thanks to the estimates of $\sup_{t\in[0,T]}\norm{v(t)}_{1/2}^2$ given by \eqref{unifv} and of $\norm{p}_{L^2(0,T)}$ given by \eqref{NT}, we deduce that
\begin{equation}
\sup_{t\in[0,T]}\norm{w(t)}^2_{1/2}\leq 2e^TC^2_BC_{1,1}(T,\norm{v_0}_{1/2})N(T)^2\norm{v_0}^4_{1/2}.
\end{equation}
Finally, thanks to hypothesis \eqref{v0}, we obtain the claim.
\end{proof}

\section{Main result}\label{MainResult}
Let $T>0$. In a separable Hilbert space $(X,\langle\cdot,\cdot\rangle,\norm{\cdot})$, we consider the control problem
\begin{equation}\label{u}
\begin{cases}
u'(t)+Au(t)+p(t)Bu(t)=0,& t\in [0,T]\\
u(0)=u_0
\end{cases}
\end{equation}
where $A:D(A)\subset X\to X$ is a densely defined linear operator satisfying \eqref{ipA}, $B:D(B)\subset X\to X$ is an unbounded linear operator which satisfies \eqref{ipB} and $p\in L^2(0,T)$ is a bilinear control. We denote by $u(\cdot;u_0,p)$ the solution of \eqref{u} associated to the initial condition $u_0$ and control $p$. We call the ``ground state solution" of problem \eqref{u} the function $\psi_1(t)=e^{-\lambda_1 t}\varphi_1$, where $\lambda_1\geq0$ is the first eigenvalue of $A$ and $\varphi_1$ is the associated eigenfunction.

For any $0\leq s_0\leq s_1$ we also introduce the linear control problem
\begin{equation}\label{ys0s1}
\begin{cases}
y'(t)+Ay(t)+p(t)B\varphi_1=0&t\in[s_0,s_1]\\
y(s_0)=y_0
\end{cases}
\end{equation}
and we denote by $y(\cdot;y_0,s_0,p)$ its solution associated to the initial condition $y_0$, attained at time $s_0$, and control $p$.

We recall that the pair $\{A,B\}$ is called $1$-null controllable in time $T$ if there exists a constant $N_T>0$ such that for any $y_0\in X$ there exists a control $p\in L^2(0,T)$ with
\begin{equation*}
\norm{p}_{L^2(0,T)}\leq N_T\norm{y_0}
\end{equation*}
such that $y(T;y_0,0,p)=0$. If $\{A,B\}$ is $1$-null controllable in time $T$, we call the constant $N(T)$ defined in \eqref{controlcost} the control cost.

We now state our main controllability result.
\begin{thm}\label{teo-contr-B-unb}
Let $A:D(A)\subset X\to X$ be a densely defined linear operator that satisfies \eqref{ipA}. Let $B:D(B)\subset X\to X$ be a linear unbounded operator such that \eqref{ipB} holds. Let $\{A,B\}$ be $1$-null controllable in any $T>0$ with control cost $N(\cdot)$ such that there exist $\nu,T_0>0$ for which
\begin{equation}\label{bound-control-cost}
N(\tau)\leq e^{\nu/\tau},\quad\forall\,0<\tau\leq T_0.
\end{equation}

Then, for any $T>0$, there exists a constant $R_{T}>0$ such that, for any $u_0\in B_{R_{T}, 1/2}(\varphi_1)$, there exists a control $p\in L^2(0,T)$ for which system \eqref{u} is locally controllable to the the ground state solution in time $T$, that is, $u(T;u_0,p)=\psi_1(T)$.

Moreover, the following estimate holds
\begin{equation}\label{boundL2norm-p}
\norm{p}_{L^2(0,T)}\leq \frac{e^{-\pi^2\Gamma_0/T}}{e^{2\pi^2\Gamma_0/(3T)}-1},
\end{equation}
where
\begin{equation}\label{Gamma0}
\Gamma_0:=2\nu+\max\{\log D, 0\}
\end{equation}
\begin{equation*}
R_T:=e^{-6\Gamma_0/T_1}
\end{equation*}
with
\begin{equation*}
D:=2\sqrt{2}C_Be^{C_B\left(\frac{5}{4}C_B+1\right)+1}\left(\max\left\{1+\frac{3}{2}C^2_B,\frac{C^2_B}{2}(1+\lambda_1^{1/2})^2(5+3C^2_B)\right\}\right)^{1/2}
\end{equation*}
\begin{equation}\label{Talpha}
T_1=\min\{\frac{6}{\pi^2}T,1,T_0\}.
\end{equation}
\end{thm}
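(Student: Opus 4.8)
The plan is to construct the control by a constructive ``staircase'' iteration over a sequence of shrinking time subintervals accumulating at $T$, using $1$-null controllability of the linearized problem on each step and pushing the genuinely nonlinear part into a quadratic remainder controlled by Proposition \ref{prop39}.

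First I would reduce the target problem to a null-controllability one for an error variable. Setting $v=u-\psi_1$, a direct computation shows that $v$ solves an equation of the form \eqref{v}: the source $p(t)B\psi_1(t)=e^{-\lambda_1 t}p(t)B\varphi_1$ is a scalar multiple of $B\varphi_1$, so that $\norm{B\psi_1(t)}\le\norm{B\varphi_1}$ and Propositions \ref{prop38}--\ref{prop39} apply verbatim, while the harmless factor $e^{-\lambda_1 t}$ is absorbed into the control when invoking $1$-null controllability. Reaching $\psi_1(T)$ is then equivalent to steering $v$ to $0$ at time $T$ from $v(0)=u_0-\varphi_1$, whose $D(A^{1/2})$-norm is small by the hypothesis $u_0\in B_{R_T,1/2}(\varphi_1)$. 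The key structural observation is the exact splitting $v=\eta+w$, where $\eta$ solves the \emph{linear} controlled equation $\eta'+A\eta+pB\varphi_1=0$ with $\eta(0)=v(0)$ and $w$ solves \eqref{w}; by uniqueness this holds identically, so if $p$ is chosen to drive the linear part to $\eta(T)=0$, then $v(T)=w(T)$ is purely the quadratic remainder estimated in Proposition \ref{prop39}. Crucially, this choice of $p$ comes directly from the $1$-null controllability hypothesis and requires no inner fixed point.

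Next I would set up the iteration. I partition the final portion of $[0,T]$ into intervals $I_k=[t_{k-1},t_k]$ of length $\tau_k=T_1/k^2$, so that $\sum_{k\ge1}\tau_k=T_1\pi^2/6\le T$ by the choice \eqref{Talpha} of $T_1$, the constraints $T_1\le1$ and $T_1\le T_0$ guaranteeing that each $\tau_k$ lies in the range where both \eqref{bound-control-cost} and the bound $K(\tau)\le e^{\Gamma_0/\tau}$ are valid. The first computation to carry out is exactly this last bound: combining \eqref{K} and \eqref{C1j} with \eqref{bound-control-cost} and using $\tau\le1$ yields $K(\tau)\le D\,N(\tau)^2\le e^{\Gamma_0/\tau}$, which is precisely what the constants $D$ and $\Gamma_0$ in \eqref{Gamma0} are designed to encode. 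On each $I_k$ I apply the one-step mechanism above: writing $\epsilon_{k-1}:=\norm{v(t_{k-1})}_{1/2}$, I use $1$-null controllability to pick $p_k$ on $I_k$ with $\norm{p_k}_{L^2(I_k)}\le N(\tau_k)\epsilon_{k-1}$ driving the linearized flow to zero, so that Proposition \ref{prop39} gives the quadratic contraction
\begin{equation*}
\epsilon_k=\norm{v(t_k)}_{1/2}\le K(\tau_k)\,\epsilon_{k-1}^2\le e^{\Gamma_0/\tau_k}\,\epsilon_{k-1}^2=e^{\Gamma_0 k^2/T_1}\,\epsilon_{k-1}^2.
\end{equation*}
The candidate control $p$ is the concatenation of the $p_k$ extended by $0$; on the possibly nonempty leftover initial segment I take $p=0$, under which the analytic semigroup keeps $\norm{v(t)}_{1/2}\le\epsilon_0$.

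Finally I would prove convergence and the norm bound. Solving the recursion with $\epsilon_0=\norm{v(0)}_{1/2}\le R_T=e^{-6\Gamma_0/T_1}$ and using $\Gamma_0\ge2\nu$ yields the super-exponential decay $\epsilon_k\le e^{-\frac{\Gamma_0}{T_1}(k^2+4k+6)}$, so $v(t_k)\to0$; since $v\in C([0,T];D(A^{1/2}))$ by Corollary \ref{cor-lio-mag} and $t_k\to T$, continuity forces $v(T)=0$, i.e.\ $u(T)=\psi_1(T)$. Along the way I must verify at each step that the smallness hypothesis $N(\tau_k)\epsilon_{k-1}\le1$ of Proposition \ref{prop39} holds, which follows from the same decay. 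For the control cost I insert the decay into $\norm{p_k}_{L^2(I_k)}\le e^{\nu k^2/T_1}\epsilon_{k-1}$ and sum a geometric-type series to obtain \eqref{boundL2norm-p}. Here lies the main obstacle: the control cost $N(\tau_k)\le e^{\nu k^2/T_1}$ blows up as $\tau_k\to0$, and convergence and smallness of $\sum_k\norm{p_k}_{L^2(I_k)}$ are recovered only because the coefficient $-\Gamma_0/T_1$ in the exponent of $\epsilon_k$ beats the coefficient $+\nu/T_1$ of the cost, which is exactly the content of the relation $\Gamma_0\ge2\nu$ built into \eqref{Gamma0}. Balancing this competition through the precise interval lengths $\tau_k=T_1/k^2$ and the threshold $R_T$ is the delicate quantitative heart of the argument.
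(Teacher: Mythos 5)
For the stationary case $\lambda_1=0$ your construction coincides, step by step, with the paper's proof: the same interval lengths $T_1/k^2$ summing to $\pi^2 T_1/6\le T$, the same use of $1$-null controllability on each step combined with Proposition \ref{prop39}, the same bound $K(\tau)\le D\,N(\tau)^2\le e^{\Gamma_0/\tau}$ behind \eqref{Gamma0}, the same solved recursion $\norm{v(t_k)}_{1/2}\le e^{-(k^2+4k+6)\Gamma_0/T_1}$, the same verification of the smallness condition $N(\tau_k)\norm{v(t_{k-1})}_{1/2}\le e^{-(2k+3)\Gamma_0/T_1}\le 1$, and the same geometric summation giving \eqref{boundL2norm-p} (placing the staircase at the end of $[0,T]$ instead of at the beginning, as the paper does, is immaterial here). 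The genuine gap is in your opening reduction, i.e. in the case $\lambda_1>0$, which is not vacuous under \eqref{ipA} (it occurs, e.g., in the paper's Dirichlet applications). If $v=u-\psi_1$, then $v$ solves $v'+Av+p\,Bv+p(t)e^{-\lambda_1 t}B\varphi_1=0$, which is \emph{not} of the form \eqref{v}: the coefficient of $B\varphi_1$ is $p(t)e^{-\lambda_1 t}$ while the coefficient of $Bv$ is $p(t)$, whereas Propositions \ref{prop38}--\ref{prop39} require one and the same scalar function in both places (their hypotheses \eqref{NT} and \eqref{v0} are formulated on that single function). In particular your ``exact splitting'' is false: if $\eta$ solves $\eta'+A\eta+pB\varphi_1=0$, $\eta(0)=v(0)$, and $w:=v-\eta$, then $w'+Aw+pBv=p\,(1-e^{-\lambda_1 t})B\varphi_1\neq 0$, so $w$ does not solve \eqref{w}. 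The spurious term is \emph{linear} in $p$, hence of size $N(\tau_k)\norm{v(t_{k-1})}_{1/2}$ with $N(\tau_k)=e^{\nu k^2/T_1}$ blowing up; it cannot be absorbed into the quadratic remainder, and the contraction $\epsilon_k\le K(\tau_k)\epsilon_{k-1}^2$ collapses.

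The other reading of your ``absorption'' — take the comparison equation with source $\tilde p\,B\varphi_1$, $\tilde p:=pe^{-\lambda_1 t}$, obtain $\tilde p$ from $1$-null controllability, and set $p=e^{\lambda_1 t}\tilde p$ — does restore the exact splitting, but then the coefficient multiplying $Bv$ is $e^{\lambda_1 t}\tilde p$, and since your staircase sits near $t=T$ the propositions apply only with the inflated effective cost $e^{\lambda_1 T}N(\tau_k)$. All constants then degrade: $\nu$ must be replaced by roughly $\nu+\lambda_1 T\,T_1$, $D$ acquires factors $e^{\lambda_1 T}$, hence $\Gamma_0$, $R_T$ and the control bound come out exponentially worse in $T$ than the explicit quantities \eqref{Gamma0} and \eqref{boundL2norm-p} claimed in the statement — so the theorem as stated is not obtained, and the admissible neighbourhood $R_T$ would even shrink as $T$ grows. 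The paper avoids this trap entirely: it proves the case $\lambda_1=0$ first, and then reduces the general case by the operator shift $A_1:=A-\lambda_1 I$ (which satisfies \eqref{ipA}, has first eigenvalue $0$, and inherits $1$-null controllability with cost satisfying \eqref{bound-control-cost}) combined with the substitution $u(t)=e^{-\lambda_1 t}z(t)$. Because \emph{every} term of the bilinear equation scales by the same factor, $z'+A_1z+pBz=0$ transforms exactly into $u'+Au+pBu=0$ with the \emph{same} control $p$, and $u(T)-\psi_1(T)=e^{-\lambda_1 T}\left(z(T)-\varphi_1\right)=0$. This conjugation at the level of the full bilinear problem — rather than at the level of the error equation, where it de-synchronizes the two occurrences of $p$ — is the step your argument is missing.
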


\begin{oss}
\emph{Let us notice that if $A:D(A)\subseteq X\to X$ satisfies
\begin{equation*}
\exists\,\sigma>0\,:\,\langle Ax,x\rangle\geq-\sigma\norm{x}^2,\,\forall\,x\in D(A)
\end{equation*}
instead of item (b) of \eqref{ipA}, and
\begin{equation*}
\exists\,\lambda>\sigma\,:\, (\lambda I+A)^{-1}:X\to X\text{ is compact}
\end{equation*}
instead of (c), then it is always possible to apply Theorem \ref{teo-contr-B-unb}, and prove the controllability of $u$ to the associated ground state solution $\psi_1(t)=e^{\sigma t}\varphi_1$ in any positive time $T>0$, if $D((A+\sigma I)^\frac{1}{2})\hookrightarrow D(B)$. }

\emph{Indeed, by the change of variable $z(t;u_0,p)=e^{-\sigma t}u(t;u_0,p)$, one has that $z$ solves the following problem
\begin{equation}\label{pb-Asigma}
\begin{cases}
z'(t)+A_\sigma z(t)+p(t)Bz(t)=0\\
z(0)=u_0
\end{cases}
\end{equation}
where $A_\sigma=A+\sigma I$ satisfies \eqref{ipA}, and where $\sigma=-\lambda_1$}. 

\emph{We can then apply Theorem \eqref{teo-contr-B-unb} and deduce that the solution of \eqref{pb-Asigma} is controllable in time $T$ to the first eigensolution $\psi_1^\sigma(t)\equiv\varphi_1$. Finally, from this latter result we get
\begin{equation*}
u(T)-\psi_1(T)=e^{\sigma T}z(T)-e^{\sigma T}\varphi_1=e^{\sigma T}(z(T)-\varphi_1)=0
\end{equation*}
which implies the controllability of $u$ to the ground state solution $\psi_1$.}
\end{oss} 

\subsection{Proof of Theorem \ref{teo-contr-B-unb}}

Our aim is to show the controllability of system \eqref{u} to the ground state solution $\psi_1(t)=e^{-\lambda_1 t}\varphi_1$, that is the solution of \eqref{u} when $p=0$ and $u_0=\varphi_1$. We recall that $\{\varphi_k\}_{k\in\NN^*}$ is a basis of $X$ of orthonormal eigenfunctions of the operator $A$, associated to the eigenvalues $\{\lambda_k\}_{k\in\NN^*}$: $A\varphi_k=\lambda_k\varphi_k$ for all $k\in\NN^*$. The proof of Theorem \ref{teo-contr-B-unb} is divided into two parts: we first consider the case $\lambda_1=0$ and prove the controllability result to the corresponding stationary eigensolution $\psi_1(t)\equiv\varphi_1$. Then, we recover the result for the case $\lambda_1>0$ in the second part.

\subsubsection{Case $\lambda_1=0$}

We define the constant
\begin{equation}\label{Tf}
T_f:=\min\{T,\frac{\pi^2}{6},\frac{\pi^2}{6}T_0\},
\end{equation}
where $T>0$ and $T_0$ is defined in \eqref{bound-control-cost}. In what follows we construct a control $p\in L^2(0,T_f)$ which drives the solution of \eqref{u} to $\psi_1$ in time $T_f$.

Set 
\begin{equation}\label{T1}
T_1:=\frac{6}{\pi^2}T_f.
\end{equation}
It is easy to see that $0<T_1\leq 1$. We now define the sequence $\{T_j\}_{j\in\NN^*}$ by
\begin{equation}\label{Tj}
T_j:=\frac{T_1}{j^2},
\end{equation}
and the time steps
\begin{equation}\label{taun}
\tau_n=\sum_{j=1}^n T_j,\qquad\forall\, n\in\NN,
\end{equation}
with the convention that $\sum_{j=1}^0T_j=0$. Notice that $\sum_{j=1}^\infty T_j=\frac{\pi^2}{6}T_1=T_f$.

Set $v:=u-\varphi_1$. Then, for any $0\leq s_0\leq s_1$, $v$ solves the following bilinear control problem
\begin{equation}\label{eq-v}
\begin{cases}
v'(t)+Av(t)+p(t)Bv(t)+p(t)B\varphi_1=0&t\in[s_0,s_1]\\
v(s_0)=v_0
\end{cases}
\end{equation}
with $v_0=u(s_0)-\varphi_1$. We denote by $v(\cdot; v_0,s_0,p)$ the solution of \eqref{eq-v} associated to the initial condition $v_0$ attained at $s_0$ and control $p$. Observe that showing the controllability of $u$ to $\varphi_1$ in time $T_f$ is equivalent to prove null controllabiliy for the solution $v$ of \eqref{eq-v}: $v(T_f;u_0-\varphi_1,0,p)=0$. 

We follow the strategy of the proof of \cite[Theorem 1.1]{acue} which consists first of obtaining an estimate of the solution of \eqref{eq-v} at $T_1$ by the square of the norm of the initial condition thanks to the construction of a suitable control $p_1\in L^2(0,T_1)$. Then, the same procedure is iterated in consecutive time steps $[\tau_{n-1},\tau_n]$ in which we build a control $p_n\in L^2(\tau_{n-1},\tau_n)$ such that, setting
\begin{equation}\label{qn-vn}
\begin{array}{ll}
q_n:=\sum_{j=1}^np_j(t)\chi_{[\tau_{n-1},\tau_n]}(t),\\
v_n:=v(\tau_n;v_0,0,q_n),
\end{array}
\end{equation}
it holds that
\begin{equation}\label{properties}
\begin{array}{ll}
1.& \norm{p_n}_{L^2(\tau_{n-1},\tau_n)}\leq N(T_n)\norm{v_{n-1}},\\
2.& y(\tau_n;v_{n-1},\tau_{n-1},p_n)=0,\\
3.& \norm{v(\tau_n;v_{n-1},\tau_{n-1},p_n)}_{1/2}\leq e^{(\sum_{j=1}^n 2^{n-j}j^2-2^n 6)\Gamma_0/T_1},\\
4.& \norm{v(\tau_n;v_{n-1},\tau_{n-1},p_n)}_{1/2}\leq\prod_{j=1}^nK(T_j)^{2^{n-j}}\norm{v_0}^{2^n}_{1/2},
\end{array}
\end{equation}
where $y(\cdot;v_{n-1},\tau_{n-1},p_n)$ is the solution of \eqref{ys0s1} in $[\tau_{n-1},\tau_n]$ with initial condition $v_{n-1}$ and control $p_n$. We recall that $K(\cdot)$ is defined in \eqref{K}.

\textbf{First step}: we consider problem \eqref{eq-v} with $[s_0,s_1]=[0,T_1]$. Since by hypothesis the pair $\{A,B\}$ is $1$-null controllable in any time $T>0$, then for any $v_0\in D(A^{1/2})$ there exists a control $p_1\in L^2(0,T_1)$ such that
\begin{equation*}
\norm{p_1}_{L^2(0,T_1)}\leq N(T_1)\norm{v_0}\quad\text{and}\quad y(T_1;v_0,0,p_1)=0
\end{equation*}
with $N(\cdot)$ the control cost defined in \eqref{bound-control-cost} and $y(\cdot;v_0,0,p_1)$ the solution of \eqref{ys0s1} in $[s_0,s_1]=[0,T_1]$. Therefore, 1. and 2. of \eqref{properties} are satisfied. We apply now Proposition \ref{prop38} with $N_T:=N(T_1)$ and we obtain 
\begin{equation*}
\sup_{t\in[0,T_1]}\norm{v(t)}^2_{1/2}\leq C_{1,1}(T_1,\norm{v_0}_{1/2})\norm{v_0}^2_{1/2}
\end{equation*}
with $C_{1,1}$ defined in \eqref{C1j}.

In order to prove 3. and 4. of \eqref{properties} we introduce the function $w(t):=v(t;v_0,0,p_1)-y(t;v_0,0,p_1)$. It is easy to see that $w$ solves \eqref{w} with $T=T_1$ and $p=p_1$. We then apply Proposition \ref{prop39} and we deduce that if
\begin{equation}\label{condition-on-v_0}
N(T_1)\norm{v_0}_{1/2}\leq 1
\end{equation}
then
\begin{equation}\label{estimate-w}
\norm{w(T_1;0,p_1)}_{1/2}=\norm{v(T_1;v_0,0,p_1)}_{1/2}\leq K(T_1)\norm{v_0}^2_{1/2}
\end{equation}
where $K(\cdot)$ is defined in \eqref{K}. Observe that, thanks to the definition of $T_1$ and to \eqref{bound-control-cost}, we infer that there exists a constant $\Gamma_0>\nu$ such that
\begin{equation}\label{boundK}
K(\tau)\leq e^{\Gamma_0/\tau},\quad 0<\tau\leq T_1.
\end{equation}
Observe that, a possible choice of $\Gamma_0$ is given by \eqref{Gamma0}.

Let $R_T=e^{-6\Gamma_0/T_1}$ with $T_1$ defined as in \eqref{T1}. Let us prove that if $v_0\in B_{R_T,1/2}(0)$ then \eqref{condition-on-v_0} is satisfied:
\begin{equation*}
N(T_1)\norm{v_0}\leq N(T_1)\norm{v_0}_{1/2}\leq e^{\nu/T_1}e^{-6\Gamma_0/T_1}\leq e^{-5\Gamma_0/T_1}\leq 1
\end{equation*}
where we have used that $\Gamma_0>\nu$, so that \eqref{estimate-w} holds.

Thus, from \eqref{estimate-w} we deduce that
\begin{equation*}
\norm{v(T_1;v_0;0,p_1)}_{1/2}\leq K(T_1)\norm{v_{0}}^2_{1/2}\leq e^{\Gamma_0/T_1}e^{-12\Gamma_0/T_1}=e^{-11\Gamma_0/T_1}
\end{equation*}
which proves 3. and 4. of \eqref{properties}.

\textbf{Iterative step}: for the sake of completeness we report the proof of \cite[Section 3.1.2]{acue} adapted to the current choice of functional setting. 

Suppose that for every $j=1,\dots,n-1$ we have built controls $p_j\in L^2(\tau_{j-1},\tau_j)$ such that \eqref{properties} is satisfied. At the step $n-1$ we have construct $p_{n-1}\in L^2(\tau_{n-2},\tau_{n-1})$ such that
\begin{equation}\label{propertiesn-1}
\begin{array}{ll}
1.& \norm{p_{n-1}}_{L^2(\tau_{n-2},\tau_{n-1})}\leq N(T_{n-1})\norm{v_{n-2}},\\
2.& y(\tau_{n-1};v_{n-2},\tau_{n-2},p_{n-1})=0,\\
3.& \norm{v(\tau_{n-1};v_{n-2},\tau_{n-2},p_{n-1})}_{1/2}\leq e^{(\sum_{j=1}^{n-1} 2^{n-1-j}j^2-2^{n-1} 6)\Gamma_0/T_1},\\
4.& \norm{v(\tau_{n-1};v_{n-2},\tau_{n-2},p_{n-1})}_{1/2}\leq\prod_{j=1}^{n-1}K(T_j)^{2^{n-1-j}}\norm{v_0}^{2^{n-1}}_{1/2}.
\end{array}
\end{equation}
Let us prove that there exists $p_n\in L^2(\tau_{n-1},\tau_n)$ which verifies \eqref{properties}. First, we define $q_{n-1}$ and $v_{n-1}$ as in \eqref{qn-vn}. Consider then the problem
\begin{equation}\label{vn-1}
\begin{cases}
v'(t)+Av(t)+p(t)Bv(t)+p(t)B\varphi_1=0&t\in[\tau_{n-1},\tau_n]\\
v(\tau_{n-1})=v_{n-1}
\end{cases}
\end{equation}
where control $p$ has to be properly chosen. We apply the change of variables $s=t-\tau_{n-1}$ so that we shift the problem into the interval $[0,T_n]$. By introducing the new variables $\tilde{v}(s)=v(s+\tau_{n-1})$ and $\tilde{p}(t)=p(s+\tau_{n-1})$, problem \eqref{vn-1} becomes
\begin{equation}\label{tildevn-1}
\begin{cases}
\tilde{v}'(t)+A\tilde{v}(t)+p(t)B\tilde{v}(t)+\tilde{p}(t)B\varphi_1=0&t\in[0,T_n]\\
\tilde{v}(0)=v_{n-1}.
\end{cases}
\end{equation}
Since $\{A,B\}$ is $1$-null controllable in any positive time, there exists $\tilde{p}_n\in L^2(0,T_n)$ such that
\begin{equation*}
\norm{\tilde{p}_n}_{L^2(0,T_n)}\leq N(T_n)\norm{v_{n-1}}\quad\text{and}\quad \tilde{y}(T_n;v_{n-1},0,\tilde{p}_n)=0
\end{equation*}
with $\tilde{y}(\cdot;v_{n-1},0,\tilde{p}_n)$ solves \eqref{eq-v} on $[0,T_n]$. Moreover, since $v_{n-1}=v(\tau_{n-1};v_0,0,q_{n-1})=v(\tau_{n-1};v_{n-2},\tau_{n-2},p_{n-1})$, we deduce that
\begin{equation}\label{NTnvn-1}
\begin{split}
N(T_n)\norm{v_{n-1}}_{1/2}&\leq e^{\nu n^2/T_1}e^{\left(\sum_{j=1}^{n-1}2^{n-1-j}j^2-2^{n-1}6\right)\Gamma_0/T_1}\\
&\leq e^{\left(n^2+(-(n-1)^2-4(n-1)+2^{n-1}6-6-2^{n-1}6\right)\Gamma_0/T_1}\\
&=e^{-\left(2n+3\right)\Gamma_0/T_1}\leq1
\end{split}
\end{equation}
from 3. of \eqref{propertiesn-1}. Observe that we have used that $\nu\leq \Gamma_0$ and the identity
\begin{equation*}
\sum_{j=0}^n\frac{j^2}{2^j}=2^{-n}\left(-n^2-4n+6(2^n-1)\right),\quad n\geq0.
\end{equation*}
We now choose $\tilde{p}=\tilde{p}_n$ in \eqref{tildevn-1} and we keep denoting by $\tilde{v}$ the corresponding solution. Define $w=\tilde{v}-\tilde{y}$ and observe that $w$ solves \eqref{w} with $v=\tilde{v}$, $T=T_n$ and control $p=\tilde{p}_n$. Thanks to \eqref{NTnvn-1}, we can apply Proposition \ref{prop39} with $T=T_n$ and we deduce that
\begin{equation*}
\norm{w(T_n;0,\tilde{p}_n)}_{1/2}=\norm{\tilde{v}(T_n;v_{n-1},0,\tilde{p}_n)}_{1/2}\leq K(T_n)\norm{v_{n-1}}_{1/2}^2.
\end{equation*}
We define $p_n(t):=\tilde{p}_n(t-\tau_{n-1})$. By shifting back the problem into the interval $[\tau_{n-1},\tau_n]$ we obtain
\begin{equation*}
\norm{p_n}_{L^2(\tau_{n-1},\tau_n)}\leq N(T_n)\norm{v_{n-1}}_{1/2}\quad\text{and}\quad y(\tau_n;v_{n-1},\tau_{n-1},p_n)=0
\end{equation*}
and 
\begin{equation}\label{vnKvn-1}
\norm{v(\tau_n;v_{n-1},\tau_{n-1},p_n)}_{1/2}\leq K(T_n)\norm{v_{n-1}}^2_{1/2}.
\end{equation}
Thus, the first two items of \eqref{properties} are proved. Using 3. of \eqref{propertiesn-1} and \eqref{boundK}, we deduce that
\begin{equation*}
\begin{split}
\norm{v(\tau_n;v_{n-1},\tau_{n-1},p_n)}_{1/2}&\leq e^{\Gamma_0 n^2/T_1}\left[e^{\left(\sum_{j=1}^{n-1}2^{n-1-j}j^2-2^{n-1}6\right)\Gamma_0/T_1}\right]^2\\
&=e^{\left(\sum_{j=1}^n2^{n-j}j^2-2^n6\right)\Gamma_0/T_1}.
\end{split}
\end{equation*}
Therefore, item 3. of \eqref{properties} is verified. Finally, using again \eqref{vnKvn-1} and 4. of \eqref{propertiesn-1} we get
\begin{equation*}
\begin{split}
\norm{v(\tau_n;v_{n-1},\tau_{n-1},p_n)}_{1/2}&\leq K(T_n)\left[\prod_{j=1}^{n-1}K(Tj)^{2^{n-1-j}}\norm{v_0}^{2^{n-1}}_{1/2}\right]^{2}\\
&=\prod_{j=1}^nK(T_j)^{2^{n-j}}\norm{v_0}^{2^n}_{1/2}.
\end{split}
\end{equation*}
The induction argument is then concluded.

We can now complete the proof of our theorem for the case $\lambda_1=0$. Notice that for all $n\in\NN^*$ we have that
\begin{equation*}
\begin{split}
\norm{v(\tau_n;v_{n-1},\tau_{n-1},p_n)}_{1/2}&\leq \prod_{j=1}^nK(T_j)^{2^{n-j}}\norm{v_0}^{2^n}_{1/2}\leq \prod_{j=1}^n\left(e^{\Gamma_0j^2/T_1}\right)^{2^{n-j}}\norm{v_0}^{2^n}_{1/2}\\
&\leq e^{\Gamma_02^n/T_1\sum_{j=1}^n j^2/2^j}\norm{v_0}^{2^n}_{1/2}\leq \left(e^{6\Gamma_0/T_1}\norm{v_0}_{1/2}\right)^{2^n}.
\end{split}
\end{equation*}
The above inequality is equivalent to
\begin{equation*}
\norm{v(\tau_n;v_{0},0,q_n)}_{1/2}\leq\left(e^{6\Gamma_0/T_1}\norm{v_0}_{1/2}\right)^{2^n}
\end{equation*}
where $q_n$ is defined in \eqref{qn-vn}.

Taking the limit as $n\to +\infty$ we deduce that
\begin{equation*}
\norm{u(T_f;u_0,q_\infty)-\varphi_1}_{1/2}=\norm{v(T_f;v_0,0,q_\infty)}_{1/2}\leq0
\end{equation*}
where $T_f$ is defined in \eqref{Tf}. Indeed, by hypothesis $u_0\in B_{R_T,1/2}(\varphi_1)$, with $R_T$ defined in \eqref{Gamma0}, and so $\norm{v_0}<e^{-6\Gamma_0/T_1}$. This means that we have construct a control $p\in L^2_{loc}([0,+\infty))$, defined as follows
\begin{equation*}
p(t)=\begin{cases}
\sum_{n=1}^\infty p_n(t)\chi_{[\tau_{n-1},\tau_n]}&t\in(0,T_f]\\
0&t>T_f,
\end{cases}
\end{equation*}
that steers the solution $u$ of \eqref{u} to the ground state solution in time $T_f$, less or equal to $T$.

Moreover, we can give an upper bound of the $L^2$-norm of the control:
\begin{equation*}
\begin{split}
\norm{p}^2_{L^2(0,T)}&=\sum_{n=1}^\infty\norm{p_n}^2_{L^2(\tau_{n-1},\tau_n)}\leq \sum_{n=1}^\infty\left(N(T_n)\norm{v_{n-1}}_{1/2}\right)^2\\
&\leq \sum_{n=1}^\infty e^{-2(2n+3)\Gamma_0/T_1}\leq\frac{e^{-6\Gamma_0/T_1}}{e^{4\Gamma_0/T_1}-1}=\frac{e^{-\pi^2\Gamma_0/T_f}}{e^{2\pi^2\Gamma_0/3T_f}-1}
\end{split}
\end{equation*}
where we have used 1. of \eqref{properties} and \eqref{NTnvn-1}. Recalling that $T_f\leq T$, we obtain \eqref{boundL2norm-p}.

\subsubsection{Case $\lambda_1>0$}

If $\lambda_1>0$ the result is easily deducible from the previous case. Indeed, we introduce the operator 
\begin{equation*}
A_1:=A-\lambda_1 I,
\end{equation*}
Observe that
\begin{itemize}
\item $A_1$ satisfies \eqref{ipA},
\item $A_1$ has the same eigenfunctions of $A$, $\{\varphi_j\}_{j\in\NN^*}$, and the first eigenvalues of $A_1$ is equal to $\mu_1=\lambda_1-\lambda_1=0$,
\item $\{A_1,B\}$ is $1$-null controllable with associated control cost $N_1(\cdot)$ that satisfies \eqref{bound-control-cost}.
\end{itemize}
Hence, once proved the exact controllability in time $T$ of the following problem
\begin{equation*}
\left\{\begin{array}{ll}
z'(t)+A_1u(t)+p(t)Bz(t)=0,& t\in [0,T]\\\\
z(0)=u_0
\end{array}\right.
\end{equation*} 
to the associated ground state solution $\tilde{\psi_1}=e^{-\mu_1 t}\varphi_1=\varphi_1$, we introduce the function $u(t):=z(t)e^{-\lambda_1 t}$ that is the solution of
\begin{equation}\label{eq-u-theo}
\left\{\begin{array}{ll}
u'(t)+Au(t)+p(t)Bu(t)=0,& t\in [0,T]\\\\
u(0)=u_0
\end{array}\right.
\end{equation} 
and satisfies
\begin{equation*}
\norm{u\left(T\right)-\psi_1\left(T\right)}=\norm{e^{-\lambda_1T}z\left(T\right)-e^{-\lambda_1T}\varphi_1}=e^{-\lambda_1T}\norm{z\left(T\right)-\varphi_1}=0.
\end{equation*}
Therefore, we have shown that \eqref{eq-u-theo} is exacly controllable to the ground state solution $\psi_1(t)=e^{-\lambda_1 t}\varphi_1$ in time $T$.

\section{Semi-global results}\label{SemiGlobal}
In this section we present two semi-global results for the exact controllability to the ground state solution of problem \eqref{u}. In the first one, Theorem \ref{teoglobal}, we show that the solution of \eqref{u} with initial condition $u_0$ lying in a suitable strip (see condition \eqref{ipu0}), reaches the ground state in finite time $T_R$.
\begin{thm}\label{teoglobal}
Let $A:D(A)\subset X\to X$ be a densely defined linear operator such that \eqref{ipA} holds, and let $B:D(B)\subset X\to X$ be an unbounded linear operator that verifies \eqref{ipB}. Let $\{A,B\}$ be a $1$-null controllable pair with control cost that satisfies \eqref{bound-control-cost}. Then there exists a constant $r_1>0$ such that for any $R>0$ there exists $T_{R}>0$ such that for all $u_0\in D(A^{1/2})$ that satisfy
\begin{equation}\label{ipu0}
\begin{array}{l}
\left|\langle u_0,\varphi_1\rangle_{1/2}-1\right|< r_1,\\\\
\norm{u_0-\langle u_0,\varphi_1\rangle_{1/2}\;\varphi_1}_{1/2}\leq R,
\end{array}
\end{equation}
problem \eqref{u} is exactly controllable to the ground state solution $\psi_1(t)=e^{-\lambda_1 t}\varphi_1$ in time $T_{R}$.
\end{thm}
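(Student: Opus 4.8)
The plan is to reach the small $D(A^{1/2})$-ball around $\varphi_1$ in which the local result Theorem \ref{teo-contr-B-unb} applies by first letting the free dynamics act, and only then switching on the local control. Since $e^{-tA}$ preserves the ground-state direction and exponentially damps every higher mode, a sufficiently long passive phase squeezes any datum obeying \eqref{ipu0} into the local ball, after which Theorem \ref{teo-contr-B-unb} finishes the job exactly. As in the proof of Theorem \ref{teo-contr-B-unb} (case $\lambda_1>0$), I would first reduce to $\lambda_1=0$ by passing to $A_1=A-\lambda_1 I$ and $z(t)=e^{\lambda_1 t}u(t)$, so that controllability of $u$ to $\psi_1$ becomes controllability of $z$ to $\varphi_1$ for a problem whose first eigenvalue is $0$; in this gauge $\langle u_0,\varphi_1\rangle_{1/2}=\langle u_0,\varphi_1\rangle$, and writing $c_k:=\langle u_0,\varphi_k\rangle$ the hypothesis \eqref{ipu0} reads $|c_1-1|<r_1$ together with $\sum_{k\ge2}(1+\lambda_k)|c_k|^2=\norm{u_0-c_1\varphi_1}_{1/2}^2\le R^2$. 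Thus I carry out the argument assuming $\lambda_1=0<\lambda_2$ and $\psi_1(t)\equiv\varphi_1$.

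Next I would fix, once and for all and \emph{independently of $R$}, the length $T'>0$ of the final local phase; by Theorem \ref{teo-contr-B-unb} this determines a fixed radius $R_\ast:=R_{T'}=e^{-6\Gamma_0/T_1}>0$ (depending only on $\nu,T_0,C_B,\lambda_1$). I then set $r_1:=R_\ast/2$, which is legitimate because in the statement $r_1$ is chosen before $R$. Along the passive phase $p\equiv0$ on $[0,t_0]$ one has $u(t)=e^{-tA}u_0\in C([0,t_0];D(A^{1/2}))$ by Corollary \ref{cor-lio-mag}, and because the ground eigenvalue is simple,
\begin{equation*}
\norm{u(t_0)-\varphi_1}_{1/2}^2=|c_1-1|^2+\sum_{k\ge2}(1+\lambda_k)|c_k|^2e^{-2\lambda_k t_0}\le r_1^2+e^{-2\lambda_2 t_0}R^2 .
\end{equation*}
Given $R>0$ I choose $t_0=t_0(R)$ so large that $e^{-2\lambda_2 t_0}R^2<R_\ast^2/4$, i.e.\ $t_0>\tfrac1{2\lambda_2}\log(4R^2/R_\ast^2)$ (and $t_0=0$ if this is negative); then $\norm{u(t_0)-\varphi_1}_{1/2}^2<R_\ast^2/4+R_\ast^2/4<R_\ast^2$, so $u(t_0)\in B_{R_\ast,1/2}(\varphi_1)$.

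Since \eqref{u} is autonomous, Theorem \ref{teo-contr-B-unb} applied on the window $[t_0,t_0+T']$ of length $T'$ yields a control $\tilde p\in L^2(t_0,t_0+T')$ steering the datum $u(t_0)\in B_{R_\ast,1/2}(\varphi_1)$ exactly to $\psi_1(t_0+T')=\varphi_1$. Concatenating $p\equiv0$ on $[0,t_0]$ with $\tilde p$ on $[t_0,t_0+T']$ produces $p\in L^2(0,T_R)$, with $T_R:=t_0+T'$, for which the (well-posed, by Proposition \ref{propa24}) solution satisfies $u(T_R;u_0,p)=\psi_1(T_R)$; undoing the reduction gives the claim for general $\lambda_1$, with $T_R$ allowed to depend on $R$ (indeed $T_R\sim(2\lambda_2)^{-1}\log R$).

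The energy estimates here are routine; the two points needing care are the following. First is the order of quantifiers: $r_1$ must be fixed \emph{uniformly} in $R$, which is exactly why one fixes $T'$ (hence $R_\ast$) first and only then sets $r_1=R_\ast/2$, letting all the $R$-dependence fall on the damping time $t_0$. Second — and this is the genuine structural input rather than a computation — the contraction of the orthogonal part rests on $\lambda_2>\lambda_1$, i.e.\ on the simplicity of the ground eigenvalue / spectral gap underlying the $1$-null controllability hypothesis; without it the $\varphi_2$-mode, bounded only by the possibly large $R$, would survive the passive phase and the free flow could never enter the local ball.
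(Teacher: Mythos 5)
Your proposal is correct and takes essentially the same route as the paper, whose proof simply defers to the strategy of \cite[Section 5]{acue}: a passive phase ($p\equiv 0$) in which the free dynamics, through the spectral gap $\lambda_2>\lambda_1$, exponentially damps the component orthogonal to $\varphi_1$ until the state enters the ball where Theorem \ref{teo-contr-B-unb} applies, followed by a local phase of fixed length --- indeed your $T_R=t_0+T'$ has exactly the structure of the paper's $T_R=1+\frac{1}{\lambda_2}\log\left(R^2/r_1^2\right)$, with all $R$-dependence placed in the waiting time and $r_1$ fixed from the local radius. The only point glossed over (by the paper as much as by your gauge reduction) is that for $\lambda_1>0$ the literal quantity $\langle u_0,\varphi_1\rangle_{1/2}=(1+\lambda_1)\langle u_0,\varphi_1\rangle$ differs from the coefficient $\langle u_0,\varphi_1\rangle$ that your argument actually needs to be close to $1$; this is a normalization defect of the statement itself (it disappears when $\lambda_1=0$, the case relevant to the paper's applications), not a gap in your proof.
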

Our second semi-global result, Theorem \ref{teoglobal0} below, ensures the exact controllability of all initial states $u_0\in D(A^{1/2})\setminus \varphi_1^\perp$  to the evolution of their orthogonal projection along the ground state defined by
\begin{equation}\label{exactphi1}
\phi_1(t)=\langle u_0,\varphi_1\rangle_{1/2}\; \psi_1(t), \quad\forall\, t \geq 0,
\end{equation}
where $\psi_1$ is the ground state solution.

\begin{thm}\label{teoglobal0}
Let $A:D(A)\subset X\to X$ be a densely defined linear operator such that \eqref{ipA} holds, and let $B:D(B)\subset X\to X$ be an unbounded linear operator that verifies \eqref{ipB}. Let $\{A,B\}$ be a $1$-null controllable pair with control cost that satisfies \eqref{bound-control-cost}.

Then, for any $R>0$ there exists $T_R>0$ such that for all $u_0\in D(A^{1/2})$ satisfying
\begin{equation}\label{cone}
\norm{u_0-\langle u_0,\varphi_1\rangle_{1/2}\;\varphi_1}_{1/2}\leq R \left|\langle u_0,\varphi_1\rangle_{1/2}\,\right|
\end{equation}
system \eqref{u} is exactly controllable to $\phi_1$, defined in \eqref{exactphi1}, in time $T_R$.
\end{thm}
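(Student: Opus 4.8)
The plan is to reduce the statement to the first semi-global result, Theorem \ref{teoglobal}, by exploiting the scaling invariance of equation \eqref{u}: for any fixed control $p$, the equation is linear and homogeneous in $u$, so $u\mapsto cu$ sends solutions to solutions for every constant $c$. The cone condition \eqref{cone} is precisely the scale-invariant counterpart of the strip condition \eqref{ipu0}, and this is the only idea needed.

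First I would set $\alpha:=\langle u_0,\varphi_1\rangle_{1/2}$. Since $\varphi_1$ is an eigenfunction of $A$, a direct computation using $A^{1/2}\varphi_1=\lambda_1^{1/2}\varphi_1$ and the self-adjointness of $A^{1/2}$ gives $\langle u_0,\varphi_1\rangle_{1/2}=(1+\lambda_1)\langle u_0,\varphi_1\rangle$, so the set $\varphi_1^\perp$ is the same for the $X$-- and the $D(A^{1/2})$--inner products, and the condition $u_0\notin\varphi_1^\perp$ is equivalent to $\alpha\neq0$. The excluded case $\alpha=0$ forces, through \eqref{cone}, $\norm{u_0}_{1/2}\leq 0$, i.e. $u_0=0$, which is steered to $\phi_1\equiv0$ by $p=0$ and is thus trivial. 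Assuming $\alpha\neq0$, I would normalize by defining $\tilde u_0:=u_0/\alpha\in D(A^{1/2})$.

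Next I would check that $\tilde u_0$ meets the hypotheses of Theorem \ref{teoglobal}. By construction $\langle\tilde u_0,\varphi_1\rangle_{1/2}=1$, so the first requirement in \eqref{ipu0} holds trivially, since $|1-1|=0<r_1$ for any $r_1>0$. For the second, \eqref{cone} yields
\[
\norm{\tilde u_0-\langle\tilde u_0,\varphi_1\rangle_{1/2}\,\varphi_1}_{1/2}=\frac{1}{|\alpha|}\norm{u_0-\alpha\varphi_1}_{1/2}\leq\frac{1}{|\alpha|}\,R|\alpha|=R.
\]
Hence Theorem \ref{teoglobal}, applied with this same $R$, provides a time $T_R>0$ and a control $p\in L^2(0,T_R)$ for which the solution of \eqref{u} starting from $\tilde u_0$ satisfies $\tilde u(T_R;\tilde u_0,p)=\psi_1(T_R)$.

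Finally I would undo the normalization. By the scaling invariance noted above, the solution of \eqref{u} with datum $u_0=\alpha\tilde u_0$ and the \emph{same} control $p$ is $u(\cdot;u_0,p)=\alpha\,\tilde u(\cdot;\tilde u_0,p)$, whence
\[
u(T_R;u_0,p)=\alpha\,\psi_1(T_R)=\langle u_0,\varphi_1\rangle_{1/2}\,\psi_1(T_R)=\phi_1(T_R),
\]
which is exactly the claimed controllability to $\phi_1$ in time $T_R$. Because $\alpha$ never enters the estimates, the time $T_R$ is the one furnished by Theorem \ref{teoglobal} and serves uniformly over the whole cone \eqref{cone}. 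I do not expect a genuine obstacle: the entire content is the homogeneity of \eqref{u} in $u$ for fixed $p$; the single point deserving a line of care is verifying that dividing by $\alpha$ converts the cone condition \eqref{cone} into the strip condition \eqref{ipu0}.
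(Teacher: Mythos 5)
Your proposal is correct and follows essentially the same route as the paper: the paper proves Theorem \ref{teoglobal0} by the strategy of \cite[Section 5]{acue}, which is exactly your reduction --- normalize $u_0$ by $\langle u_0,\varphi_1\rangle_{1/2}$ to place it in the strip \eqref{ipu0}, apply Theorem \ref{teoglobal}, and scale back using the homogeneity of \eqref{u} in $u$ for fixed $p$, so that $T_R$ is the time furnished by the strip theorem. Your side remarks (the identity $\langle u_0,\varphi_1\rangle_{1/2}=(1+\lambda_1)\langle u_0,\varphi_1\rangle$ and the trivial case $\langle u_0,\varphi_1\rangle_{1/2}=0$ forcing $u_0=0$) are also accurate.
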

To prove Theorems \ref{teoglobal} and \ref{teoglobal0}, one may follows the strategies described in \cite[Section 5]{acue}. The only difference with respect to \cite[Section 5]{acue} is that, in the current setting, $u_0\in D(A^{1/2})$ and this yields to the following definition of the controllability time
\begin{equation*}
T_R:=1+\frac{1}{\lambda_2}\log\left(\frac{R^2}{r^2_1}\right).
\end{equation*}

\section{Applications}\label{Applications}
In this section we discuss applications of Theorem \ref{teo-contr-B-unb} to parabolic equations. Let us first recall a result from \cite{acue} which provides sufficient conditions for a pair of linear operators $A$ and $B$, to be $j$-null controllable with control cost that fulfils \eqref{bound-control-cost}.
\begin{thm}\label{Thm-suff-cond}
Let $A:D(A)\subset X\to X$ be such that
\begin{equation}\label{ipA-sigma}
\begin{array}{ll}
(a) & A \mbox{ is self-adjoint},\\
(b) &\exists\,\sigma>0\,:\,\langle Ax,x\rangle \geq-\sigma \norm{x}^2,\,\, \forall\, x\in D(A),\\
(c) &\exists\,\lambda>\sigma\,:\,(\lambda I+A)^{-1}:X\to X \mbox{ is compact}
\end{array}
\end{equation}
and suppose that there exists a constant $\alpha>0$ for which the eigenvalues of $A$ verifies the gap condition
\begin{equation}\label{gap}
\sqrt{\lambda_{k+1}-\lambda_1}-\sqrt{\lambda_k-\lambda_1}\geq \alpha,\quad\forall\, k\in \NN^*.
\end{equation}
Let $B:D(B)\subset X\to X$ be a linear operator such that there exist $b,q>0$ for which
\begin{equation}\label{ipB-suff-cond}
\begin{array}{l}
\langle B\varphi_j,\varphi_j\rangle\neq0\quad\mbox{and}\quad\left|\lambda_k-\lambda_j\right|^q|\langle B\varphi_j,\varphi_k\rangle|\geq b,\quad\forall\,k\neq j.
\end{array}
\end{equation}
Then, the pair $\{A,B\}$ is $j$-null controllable in any time $T>0$ with control cost $N(T)$ that satisfies \eqref{bound-control-cost}.
\end{thm}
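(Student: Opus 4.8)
The plan is to reduce $j$-null controllability to a moment problem and to solve it by means of a biorthogonal family, following the classical moment method. Writing the solution of the linear system \eqref{newlin} by Duhamel's formula together with the spectral representation \eqref{semigr}, and projecting the terminal condition $y(T)=0$ onto each eigenfunction $\varphi_k$, I would obtain the scalar identities
\begin{equation*}
\langle y_0,\varphi_k\rangle = \langle B\varphi_j,\varphi_k\rangle\int_0^T e^{\lambda_k s}p(s)\,ds,\qquad\forall\,k\in\NN^*.
\end{equation*}
After the time reversal $\tilde p(s)=p(T-s)$ (an isometry of $L^2(0,T)$) and the substitution $\tilde p(s)=e^{\lambda_1 s}h(s)$ (which costs only the bounded factor $e^{|\lambda_1|T}$), these turn into a moment problem for $h$ with respect to the shifted exponentials $e^{-\mu_k s}$, where $\mu_k:=\lambda_k-\lambda_1\geq0$:
\begin{equation*}
\int_0^T e^{-\mu_k s}h(s)\,ds = d_k,\qquad d_k:=\frac{\langle y_0,\varphi_k\rangle}{\langle B\varphi_j,\varphi_k\rangle}\,e^{-\lambda_k T}.
\end{equation*}
Here the hypothesis $\langle B\varphi_j,\varphi_j\rangle\neq0$ makes the $k=j$ moment meaningful, while the lower bound in \eqref{ipB-suff-cond} yields $1/|\langle B\varphi_j,\varphi_k\rangle|\leq |\lambda_k-\lambda_j|^q/b$, so the coefficients $d_k$ grow at most polynomially in $k$ before the decaying factor $e^{-\lambda_k T}$.

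The key analytic ingredient is the existence of a family $\{\theta_k\}_k\subset L^2(0,T)$ biorthogonal to $\{e^{-\mu_k s}\}_k$, namely $\int_0^T e^{-\mu_m s}\theta_k(s)\,ds=\delta_{mk}$, together with a sharp bound of the form
\begin{equation*}
\norm{\theta_k}_{L^2(0,T)}\leq C\,e^{C_2/T}\,e^{C_3\sqrt{\mu_k}},
\end{equation*}
valid under the gap condition \eqref{gap}, which is precisely the Fattorini--Russell condition $\sqrt{\mu_{k+1}}-\sqrt{\mu_k}\geq\alpha$. This construction (entire functions of exponential type, Paley--Wiener theory) is the technically deepest step, and I would invoke it from the literature rather than reprove it. Setting $h:=\sum_k d_k\theta_k$ then solves the moment problem, provided the series converges in $L^2(0,T)$, and the control is recovered by undoing the two substitutions.

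It remains to estimate $\norm{h}_{L^2(0,T)}\leq\sum_k|d_k|\,\norm{\theta_k}_{L^2(0,T)}$ and to read off the control cost. Applying the Cauchy--Schwarz inequality together with $\sum_k|\langle y_0,\varphi_k\rangle|^2=\norm{y_0}^2$, the whole estimate reduces to controlling
\begin{equation*}
\sum_k |\lambda_k-\lambda_j|^{2q}\,e^{-2\mu_k T}\,e^{2C_3\sqrt{\mu_k}},
\end{equation*}
where I have also used $e^{-2\lambda_k T}=e^{-2\lambda_1 T}e^{-2\mu_k T}$. The elementary balancing $-\mu_k T+C_3\sqrt{\mu_k}\leq C_3^2/(4T)$ extracts the characteristic fast-control factor $e^{C_3^2/T}$, after which the residual sum $\sum_k|\lambda_k-\lambda_j|^{2q}e^{-\mu_k T}$ converges (the gap condition forces $\mu_k\geq\alpha^2(k-1)^2$) and grows only polynomially in $1/T$ as $T\to0^+$. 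Collecting the factors $e^{C_2/T}$, $e^{C_3^2/(2T)}$ and this polynomial yields $\norm{p}_{L^2(0,T)}\leq e^{\nu/T}\norm{y_0}$ for $T$ small, which is exactly the control cost bound \eqref{bound-control-cost}. The main obstacle, beyond citing the biorthogonal family, is carrying out this summation so that the $T$-dependence collapses precisely to the exponential form $e^{\nu/\tau}$ and not to anything worse.
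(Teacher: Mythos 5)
Your proposal is correct and is essentially the same argument by which this theorem is actually established: the paper itself gives no proof of Theorem \ref{Thm-suff-cond} but recalls it from \cite{acue}, where it is proved by exactly your moment-method reduction --- projecting the terminal condition onto the eigenbasis, solving the resulting moment problem with a biorthogonal family to $\{e^{-(\lambda_k-\lambda_1)s}\}_{k}$ whose $L^2$ bounds hold under the gap condition \eqref{gap}, and using the balancing $-(\lambda_k-\lambda_1)T+C\sqrt{\lambda_k-\lambda_1}\leq C^2/(4T)$ together with the polynomial lower bound from \eqref{ipB-suff-cond} to collapse the cost to the form \eqref{bound-control-cost}. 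The only step you leave implicit --- passing from small-time controllability to controllability in any time $T>0$ --- is immediate, since a control achieving $y(\min\{T,T_0\})=0$, extended by zero, keeps the free dynamics at the origin up to time $T$.
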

In particular, for accretive operators ($\sigma=0$), hypothesis \eqref{gap} can be replaced by
\begin{equation}\label{gap-no-l1}
\sqrt{\lambda_{k+1}}-\sqrt{\lambda_k}\geq \alpha,\quad\forall\, k\in \NN^*,
\end{equation}
see \cite[Remark 6.1]{acue}.

In order to verify the property of $1$-null controllability required in Theorem \ref{teo-contr-B-unb}, we will check the validity of the assumptions of Theorem \ref{Thm-suff-cond} for $j=1$.
\subsection{Fokker--Planck equation}\label{ex1}
The Fokker-Planck equation describes the evolution of the probability density $u(t,x)$ of a real-valued random variable $X_t$, which is associated with an Ito stochastic differential equation driven by a standard Wiener process $W_t$
\begin{equation}\label{SDE}
\begin{cases}
dX_t=\nu(t,X_t)dt+\sigma(t,X_t)dW_t\\
X(t=0)=X_0
\end{cases}
\end{equation}
with drift $\nu(t,x)$, diffusion coefficient $D(t,x)=\sigma^2(t,x)/2$ and initial condition $X_0$. We recall that given any $a\leq b$, denoting by $u(t,x)$ the probability density associated to $X_t$, the following identity holds
\begin{equation*}
\mathbb{P}(a\leq X_t\leq b )=\int_a^b u(t,x)dx.
\end{equation*}
Under suitable assumptions on the coefficients $\nu$ and $\sigma$, the equation satisfied by $u(t,x)$, named after A. D. Fokker and M. Planck, is the following one
\begin{equation*}
\begin{cases}
\frac{\partial}{\partial t}u(t,x)=\frac{\partial^2}{\partial x^2}\left(D(t,x)u(t,x)\right)-\frac{\partial}{\partial x}\left(\nu(t,x)u(t,x)\right),&(t,x)\in[0,T]\times \RR\\
u(0,x)=u_0(x)
\end{cases}
\end{equation*}
where $u_0$ is the density associated to $X_0$.

Physically, the probability density $u(t,x)$ can be interpreted as a quantity proportional to the number of particles in a flow of an abstract substance. For instance, it can reflect the concentration of this substance at the point $x$ at time $t$.

Let us first recall that \eqref{SDE} admits a strong solution (see \cite[Definition 9.1]{baldi}) which is pathwise unique (see \cite[Definition 9.4]{baldi}) under the following assumptions
\begin{equation*}\label{BaldiH1}
\text{(H1)}
\begin{cases}
\mbox{1. } (t,x) \mapsto \nu(t,x) \mbox{ and } (t,x) \mapsto \sigma(t,x) \mbox{ are measurable functions on } [0,T] \times \mathbb{R}\\

\mbox{2. }  \exists\,M>0\,:\,|\nu(t,x)|\leqslant M(1+|x|) \mbox{ and }  |\sigma(t,x)|\leqslant M(1+|x|)\,, \ \forall \ (t,x) \in [0,T] \times \mathbb{R}\\

\mbox{3. }  \exists\,L>0\,:\,|\nu(t,x)-\nu(t,y)|\leqslant L|x-y| \mbox{ and } |\sigma(t,x)-\sigma(t,y)|\leqslant L|x-y|\,, \ \forall \ (t,x) \in [0,T] \times \mathbb{R}
\end{cases}
\end{equation*}
(see \cite[Theorem 9.2]{baldi}).

We are interested in studying the possibility to find a drift $\nu$ such that the probability density $u$ reaches the associated ground state solution in finite time. Moreover, as suggested by the results of the previous sections, we would like to take a drift of the form
\begin{equation*}
\nu(t,x)=p(t)\mu(x),\qquad (t,x)\in[0,T]\times \RR.
\end{equation*}
However, the sublinear growth assumption $2.$ of (H1) on $\nu$ requires an essential bound on the scalar control $p$. Since our controls $p$ are only locally square integrable, we need to weaken (H1). Thus, in Appendix \ref{AppendixB}, we adapt the proof of the existence and uniqueness of a strong solution of \eqref{SDE} under the following weaker assumptions:
\begin{equation*}\label{ABCUH2}
\text{(H2)}
\begin{cases}
\mbox{1. } (t,x) \mapsto \sigma(t,x) \mbox{ is a measurable function on } [0,T] \times \RR\\

\mbox{2. }  p \in L^2_{loc}(\mathbb{R})\\

\mbox{3. }  \exists\,M>0\,:\,|\mu(x)|\leqslant M(1+|x|) \mbox{ and}  |\sigma(t,x)|\leqslant M(1+|x|)\,, \ \forall \ (t,x) \in [0,T] \times \RR\\

\mbox{4. }  \exists\,L>0\,:\,|\mu(x)-\mu(y)|\leqslant L|x-y| \,, \; |\sigma(t,x)-\sigma(t,y)|\leqslant L|x-y|\,, \; \forall \ t \in [0,T]\,, \;  \forall \ x,y  \in \RR
\end{cases}
\end{equation*}

From now on we will consider a constant diffusion $\sigma(t,x)\equiv\sqrt{2}$ and drift of the form $\nu(t,x)=p(t)\mu(x)$, where $p\in L^2(0,T)$ and $\mu:[0,1]\to \RR$ is at least Lipschitz continuous. Then, by extending $\mu(\cdot)$ outside the interval $[0,1]$ as follows
\begin{equation}\label{extension-mu}
\tilde{\mu}(x)=\begin{cases}
\mu(0)&x\leq0\\
\mu(x)&0<x<1\\
\mu(1)& x\geq1
\end{cases}
\end{equation}
it is clear that $\tilde{\mu}$ satisfies assumption (H2).

The Fokker-Planck equation can be studied also on bounded domains under suitable boundary conditions such as perfectly reflecting, partially reflective, and non reflecting boundary conditions. We now describe such conditions for the equation
\begin{equation*}
\begin{cases}
\frac{\partial}{\partial t}u(t,x)=\frac{\partial^2}{\partial x^2}u(t,x)-p(t)\frac{\partial}{\partial x}\left(\mu(x)u(t,x)\right),&(t,x)\in[0,T]\times [0,1]\\
u(0,x)=u_0(x)
\end{cases}
\end{equation*}
with their physical aspects and the mathematical difficulties they generate.

\vskip 4mm

$\bullet$ The perfectly reflecting boundary conditions are given by:
\begin{equation}
\frac{\partial}{\partial x}u(t,1)-p(t)\mu(1)u(t,1)=\frac{\partial}{\partial x}u(t,0)-p(t)\mu(0)u(t,0)=0
\end{equation}
Let us note that, for such boundary conditions, the total mass is conserved in the interval $[0,1]$. Thus, if the initial data $u_0$ has a total mass $1$, then through time, the probability density $u$ satisfies
\begin{equation}\label{prob=1}
\int_0^1 u(t,x)dx=1\qquad\forall\,t\in (0,T),
\end{equation}
that is, the probability to find particles in the interval $[0,1]$ is equal to 1. Indeed, by using the equation solved by $u$ we get
\begin{equation*}
\int_0^1 \frac{\partial}{\partial t}u(t,x)dx=\int_0^1\left(\frac{\partial^2}{\partial x^2}u(t,x)-p(t)\frac{\partial}{\partial x}\left(\mu(x)u(t,x)\right)\right)dx=0,
\end{equation*}
hence 
\begin{equation*}
\frac{\partial}{\partial t}\int_0^1 u(t,x)dx=0\qquad\forall\,t\in(0,T)
\end{equation*}
which implies condition \eqref{prob=1}. However, these perfect boundary conditions have a great impact on the functional frame in which one can set up the abstract formulation of the Fokker-Planck equation: the domain of the abstract operator $A$ will have to include the dependence on time and control $p$, that is, one should have to handle $D(A(t,p))=\{u(t,\cdot) \in H^2(0,1), u_x(t,1)=p(t)\mu(1)u(t,1)\,, u_x(t,0)=p(t)\mu(0)u(t,0)\}$ for all $t \in [0,T]$. As far as we know such mathematical difficulties have not yet been studied in bilinear control, the strongest difficulty being that the domain depends itself on the scalar control $p$.

\vskip 4mm

$\bullet$ The partially reflecting boundary conditions (i.e. reflecting only the diffusive part of the process, the Brownian motion), are given by
$$
\frac{\partial}{\partial x}u(t,1)=\frac{\partial}{\partial x}u(t,0)=0.
$$
which leads to the following problem
\begin{equation}\label{FP-Neumann}
\left\{\begin{array}{ll}
\frac{\partial}{\partial t}u(t,x)-\frac{\partial^2}{\partial x^2} u(t,x)+p(t)\frac{\partial}{\partial x}\left(\mu(x)u(t,x)\right)=0&(t,x)\in[0,T]\times[0,1]\\\\
\frac{\partial}{\partial x}u(t,1)=\frac{\partial}{\partial x}u(t,0)=0&t\in[0,T]\\\\
u(0,x)=u_0(x)&x\in[0,1]
\end{array}\right.
\end{equation}
However, when considering such boundary conditions, the total mass is, in general, no more conserved through time. Indeed, considering again the equation solved by $u$, this time we find that
\begin{equation*}
\int_0^1 \frac{\partial}{\partial t}u(t,x)dx=\int_0^1\left(\frac{\partial^2}{\partial x^2}u(t,x)-p(t)\frac{\partial}{\partial x}\left(\mu(x)u(t,x)\right)\right)dx=p(t)[\mu(1)u(t,1)-\mu(0)u(t,0)].
\end{equation*}

\vskip 4mm

$\bullet$ The absorbing (Dirichlet) boundary conditions
$$
u(t,1)=u(t,0)=0
$$
leads to following problem
\begin{equation}\label{FP-Dirichlet}
\left\{\begin{array}{ll}
\frac{\partial}{\partial t}u(t,x)-\frac{\partial^2}{\partial x^2} u(t,x)+p(t)\frac{\partial}{\partial x}\left(\mu(x)u(t,x)\right)=0&(t,x)\in[0,T]\times[0,1]\\\\
u(t,1)=u(t,0)=0&t\in[0,T]\\\\
u(0,x)=u_0(x)&x\in[0,1].
\end{array}\right.
\end{equation}
The total density is again in general, not preserved 
\begin{equation*}
\int_0^1 \frac{\partial}{\partial t}u(t,x)dx=\int_0^1\left(\frac{\partial^2}{\partial x^2}u(t,x)-p(t)\frac{\partial}{\partial x}\left(\mu(x)u(t,x)\right)\right)dx=\frac{\partial}{\partial x}u(t,1)-\frac{\partial}{\partial x}u(t,0).
\end{equation*}
In view of the mathematical difficulties generated by the perfectly reflecting boundary conditions, we shall consider in the present paper, only the partially reflecting boundary conditions, and the absorbing ones.

\vskip 2mm

Let us start by studying local controllability to the ground state for problem \eqref{FP-Neumann}, where our bilinear control will be the time dependent part of the drift $p(\cdot)$. We set $I=(0,1)$. We recast the problem in the general setting of \eqref{u} by introducing the operators $A$ and $B$ defined by
\begin{equation*}
D(A)=\left\{\varphi\in H^2(I)\,:\, \frac{\partial}{\partial x}\varphi(0)=\frac{\partial}{\partial x}\varphi(1)=0\right\},\quad A\varphi=-\frac{d^2\varphi}{dx^2}
\end{equation*}
\begin{equation*}
D(B)=\left\{\varphi\in L^2(I)\,:\,\frac{d}{dx}(\mu\varphi)\in L^2(I)\right\},\quad B\varphi=\frac{d}{dx}\left(\mu\varphi\right)
\end{equation*}
where $\mu$ is a real-valued function in $H^2(I)$ to be chosen later on in order to fulfill the rank condition \eqref{ipB-suff-cond}.

$A$ satisfies all the properties in \eqref{ipA} and its eigenvalues and eigenfunctions have the following explicit expressions
\begin{equation*}
\begin{array}{lll}
\lambda_0=0,&\varphi_0=1\\\\
\lambda_k=(k\pi)^2,& \varphi_k(x)=\sqrt{2}\cos(k\pi x),& \forall\, k\geq1.
\end{array}
\end{equation*}
It is straightforward to prove that the eigenvalues fulfill the required gap property. Indeed, 
\begin{equation*}
\sqrt{\lambda_{k+1}}-\sqrt{\lambda_k}=(k+1)\pi-k\pi=\pi,\qquad \forall k\in \NN \,,
\end{equation*}
so that \eqref{gap-no-l1} is satisfied. 

Observe that, in this context, we have an explicit description of the spaces $D(A^{s/2})$, see \cite[Section 4.3.3]{tri} for a general result. For example, for $s=1$
\begin{equation*}
D(A^{1/2})=H^1(I).
\end{equation*}
In order to apply Theorem \ref{teo-contr-B-unb}, we have to check that $D(A^{1/2})\hookrightarrow D(B)$. This is easily proved as follows:
\begin{equation*}
||\varphi||_{D(B)}=||(\mu\varphi)_x||_{L^2(I)}\leq C_\mu(||\varphi_x||_{L^2(I)}+||\varphi||_{L^2(I)})\leq C||\varphi||_{D(A^{1/2})}
\end{equation*}
for all $\varphi$ in $D(A^{1/2})$.

\vskip 2mm

To prove local controllability of \eqref{FP-Neumann} to the ground state solution $\psi_0(t,x)\equiv 1$, we want to use Theorem \ref{Thm-suff-cond} for $j=0$ (note also that, due to the Neumann boundary conditions, one has to consider $j=0$ instead of $j=1$ and for $k$ varying in $\mathbb{N}$ instead of $\mathbb{N}^{\ast}$) in order to apply Theorem \ref{teo-contr-B-unb}. Thus, we have to check that \eqref{ipB-suff-cond} is satisfied for all $k \in \mathbb{N}$. 
By definition of $B$, we have $B\varphi_0=\mu'$, hence one can observe that the choice $\mu(x)=x$ for all $x \in I$ leads to $\langle B\varphi_0,\varphi_k\rangle=0$, which is not suitable for our purposes.

Let us first examine more precisely $\langle B\varphi_0,\varphi_k\rangle$ for all $k \in \mathbb{R}^{\ast}$:
\begin{equation*}
\begin{split}
\langle B\varphi_0,\varphi_k\rangle&=\sqrt{2}\int_0^1\mu'(x)\cos(k\pi x)dx=\sqrt{2}\left.\mu'(x)\frac{\sin (k\pi x)}{k\pi}\right|^1_0-\sqrt{2}\int_0^1\mu''(x)\frac{\sin (k\pi x)}{k\pi}dx\\
&=\left.\sqrt{2}\mu''(x)\frac{\cos(k\pi x)}{(k\pi)^2}\right|^1_0-\sqrt{2}\int_0^1\mu'''(x)\frac{\cos(k\pi x)}{(k\pi)^2}dx\\
&=\frac{\sqrt{2}}{(k\pi)^2}\left[\mu''(1)(-1)^k-\mu''(0)\right]-\sqrt{2}\int_0^1\mu'''(x)\frac{\cos(k\pi x)}{(k\pi)^2}dx,\qquad\forall\,k\geq1
\end{split}
\end{equation*}
By the Riemann-Lebesgue Lemma, the last integral term on the right-hand side of the above identity converges to $0$ as $k$ goes to $+\infty$. Thus, if we choose $\mu$ such that $\mu''(1)\neq\pm \mu''(0)$, then there exists $k_1 \geq 1$, such that
\begin{equation*}
\exists\,b>0\,:\,\lambda_k|\langle B\varphi_0,\varphi_k\rangle|\geq b\,, \quad \mbox{ for all } k > k_1
\end{equation*}
Furthermore, if $\mu(1)\neq \mu(0)$ we deduce that
\begin{equation*}
\langle B\varphi_0,\varphi_0\rangle=\int_0^1\mu'(x)dx\neq0.
\end{equation*}
Hence, if $\mu''(1)\neq\pm \mu''(0)$ and $\mu(1)\neq \mu(0)$, we have only to check that $\langle B\varphi_0,\varphi_k\rangle  \neq 0$ for a finite range of $k \in \{1, \ldots, k_1\}$.
To sum up, every function $\mu$ such that
\begin{equation*}
\left\{\begin{array}{l}
\mu(1)\neq \mu(0)\\\\
\mu''(1)\neq \pm\mu''(0)\\\\
\langle \mu\varphi_0,\varphi_k\rangle\neq 0,\qquad k=1,\dots,k_1
\end{array}
\right.
\end{equation*}
is suitable for our controllability purposes. For instance, any power $\mu(x)=x^n$, $x\in [0,1]$, with $n>2$ satisfies the above conditions at the boundary. Moreover, once extending $\mu$ as in \eqref{extension-mu}, it clearly satisfies (H2)\footnote{We observe that by multiplying $\mu$ by a cut-off function, it is possible to construct a smooth extension of $\mu$ which satisfies (H2).}. So, one has just to check that $\langle \mu\varphi_0,\varphi_k\rangle\neq 0$ for $k=1,\dots,k_1$. For example, for $n=3$ one easily prove that
\begin{equation*}
\langle \mu\varphi_0,\varphi_k\rangle=\begin{cases}
6\sqrt{2}\frac{(-1)^k}{(k\pi)^2}& k\geq1\\\\
\sqrt{2}&k=0.
\end{cases}
\end{equation*}
Another suitable choice is $\mu(x)=\sin(\alpha x)$ for all $x \in I$, where $\alpha$ is as any positive real number chosen in $[0,\infty) \backslash \pi \mathbb{N}$. In this case one can check that $ \mu''(1)=-\alpha^2\sin(\alpha) \neq 0 =\mu''(0)$, $\mu(1)= \sin(\alpha)\neq 0= \mu(0)$, and
$$
\langle B\varphi_0,\varphi_k\rangle=\frac{\sqrt{2}}{\alpha^2-(k\pi)^2}(-1)^k\alpha^2 \sin(\alpha).
$$
Thus, any choice for $\mu$ of the above forms meets all the required properties, and also the assumptions in (H2) for the well-posedness of the stochastic differential equation.

Hence, thanks to Theorems \ref{Thm-suff-cond} and \ref{teo-contr-B-unb}, is possible to build a control $p\in L^2(0,T)$ such that the solution of the Fokker--Planck equation \eqref{FP-Neumann}, with initial condition in a neighbourhood of $\varphi_0=1$, partially reflecting boundary conditions and drift $\nu(t,x)=p(t)\mu(x)$, is controllable to $\psi_0=1$ in time $T$. This means that the probability to find the particle in the interval $[0,1]$ at time $T$ is equal to 1 (then the event happens almost surely) even if we are in presence of non-perfectly reflecting walls. This is due to the appropriate choice of the drift.

We move now to the Fokker-Planck equation with absorbing (Dirichlet) boundary conditions. In this case the eigenvalues and eigenfunctions of the Laplacian are the following
\begin{equation*}
\lambda_k=(k\pi)^2,\quad \varphi_k(x)=\sqrt{2}\sin(k\pi x),\quad \forall k\in\NN^*.
\end{equation*}
Since we have that
\begin{equation*}
\int_0^1\varphi_1(x)dx=\sqrt{2}\int_0^1 \sin(\pi x)dx=\frac{2\sqrt{2}}{\pi}
\end{equation*}
controlling the solution to the ground state means that we are forcing some mass to remain in the interval $[0,1]$ after time $T$ (in the sense that with probability equal to $\frac{2\sqrt{2}}{\pi}\cong 0.9$ we find a particle in the interval $[0,1]$), even though we are in presence of absorbing boundary conditions. 

In order to apply Theorem \ref{Thm-suff-cond} for $j=1$, and deduce the controllability of \eqref{FP-Dirichlet} to the ground state by applying Theorem \ref{teo-contr-B-unb}, we have to verify the rank condition of the unbounded operator $B$. Let us compute the following scalar product
\begin{equation*}
\begin{split}
\langle (\mu\varphi_1)',\varphi_k\rangle&=\sqrt{2}\int_0^1 (\mu\varphi_1)'(x)\sin(k\pi x)dx\\
&=\sqrt{2}\left(-\left.(\mu\varphi_1)'(x)\frac{\cos(k\pi x)}{k\pi}\right|^1_0+\int_0^1\left(\mu\varphi_1\right)''(x)\frac{\cos(k\pi x)}{k\pi}dx\right)\\
&=\frac{2}{k}\left(\mu(1)(-1)^{k}+\mu(0)\right)+\frac{\sqrt{2}}{k\pi}\int_0^1\left(\mu\varphi_1\right)''(x)\cos(k\pi x)dx.
\end{split}
\end{equation*}
Thus, if $\mu(1)\neq\pm \mu(0)$ and $\langle (\mu\varphi_1)',\varphi_k\rangle \neq0,\,\forall\,k\in\NN^*$, there exists a constant $b$ such that
\begin{equation*}
\lambda_k^{1/2}|\langle B\varphi_1,\varphi_k\rangle|\geq b,\qquad\forall\,k\in\NN^*.
\end{equation*}
Hence, any potential $\mu(x)=x^n$, with $n\geq 1$, extended to the real line as in the previous example, is admissible for having exact controllability of \eqref{FP-Dirichlet} to the ground state. 
\begin{oss}
\emph{If we consider $\mu(x)=x$, we can directly check that the Fourier coefficients of $B\varphi_1$ do not vanish for every $k\in\NN$}
\begin{equation*}
\langle (\mu\varphi_1)',\varphi_k\rangle=\left\{\begin{array}{ll}
\frac{(-1)^k2k}{k^2-1},&k\geq2\\\\
\frac{1}{2}&k=1.
\end{array}\right.
\end{equation*}
\emph{and the lower bound \eqref{ipB-suff-cond} is satisfied with $q=\frac{1}{2}$ and $b=2\pi$}.
\end{oss}


\subsection{Diffusion equation with Neumann boundary conditions}\label{ex4}
Now, we consider a diffusion equation with a controlled potential subject to Neumann boundary conditions. Let $I=(0,1)$ and consider the following problem
\begin{equation}\label{40}
\left\{\begin{array}{ll}
u_t(t,x)-\partial^2_{x}u(t,x)+p(t)\mu(x)\left(u_x(t,x)+u(t,x)\right)=0 & (t,x)\in[0,T]\times I \\
u_x(t,0)=0,\quad u_x(t,1)=0 &t\in[0,T]\\
u(0,x)=u_0(x). & x\in I
\end{array}\right.
\end{equation}

Let $X=L^2(I)$, we rewrite \eqref{40} in abstract form by defining the operators $A$ and $B$ as
\begin{equation}\nonumber
D(A)=\{ \varphi\in H^2(I): \varphi^\prime(0)=0\,,\varphi^\prime(1)=0\},\quad A\varphi=-\frac{d^2\varphi}{dx^2}
\end{equation}
\begin{equation}\nonumber
D(B)=H^1(I),\quad B\varphi=\mu\left(\frac{d}{dx}\varphi+\varphi\right).
\end{equation}
where $\mu$ is a real-valued function in $H^2(I)$.

Operator $A$ satisfies the assumptions in \eqref{ipA} and it is possible to compute explicitly its eigenvalues and eigenfunctions:
\begin{equation*}
\begin{array}{lll}
\lambda_0=0,&\varphi_0=1\\
\lambda_k=(k\pi)^2,& \varphi_k(x)=\sqrt{2}\cos(k\pi x),& \forall\, k\geq1.
\end{array}
\end{equation*}

Since the eigenvalues are the same of those in Example \ref{ex1} for $k\geq1$, the gap condition is satisfied for all $k\geq0$.

Furthermore, we have that
\begin{equation*}
\norm{B\varphi}=\norm{\mu(\varphi_x+\varphi)}\leq C_\mu(\norm{A^{1/2}\varphi}+\norm{\varphi})\leq C\norm{\varphi}_{D(A^{1/2})},
\end{equation*}
thus, also hypothesis \eqref{BA12} is verified.

Let us compute the scalar product $\langle B\varphi_0,\varphi_k\rangle$ to find a lower bound of the Fourier coefficients of $B\varphi_0$:
\begin{equation*}\begin{split}
\langle \mu\left(\varphi_0^\prime+\varphi_0\right),\varphi_k\rangle&=\sqrt{2}\int_0^1 \mu(x)\cos(k\pi x)dx\\
&=\sqrt{2}\left(\left.\mu(x)\frac{\sin(k\pi x)}{k\pi}\right|^1_0-\int_0^1\mu'(x)\frac{\sin(k\pi x)}{k\pi}dx\right)\\
&=\sqrt{2}\left(\left.\mu'(x)\frac{\cos(k\pi x)}{(k\pi)^2}\right|^1_0-\int^1_0\mu''(x)\frac{\cos(k\pi x)}{(k\pi)^2}dx\right)\\
&=\frac{\sqrt{2}}{(k\pi)^2}\left(\mu'(1)(-1)^k-\mu'(0)\right)-\frac{\sqrt{2}}{(k\pi)^2}\int^1_0\mu''(x)\cos(k\pi x)dx.
\end{split}
\end{equation*}
Thus, reasoning as Example \ref{ex1}, if $\langle B\varphi_0,\varphi_k\rangle\neq0$ $\forall k \in\NN$ and $\mu\rq{}(1)\pm\mu\rq{}(0)\neq 0$, then we have that 
\begin{equation}\label{lb3}
\exists\,\, C>0\mbox{ such that } |\langle B\varphi_0,\varphi_k\rangle|\geq Ck^{-2}=C\lambda_k^{-1},\quad \forall k\in\NN^*
\end{equation}
and therefore hypothesis \eqref{ipB-suff-cond} is fulfilled. 
\begin{oss}
\emph{An example of a suitable function $\mu$ for problem \eqref{40} that satisfies the above hypothesis, is $\mu(x)=x^2$, for which}
\begin{equation*}
\langle B\varphi_0,\varphi_k\rangle=\frac{2\sqrt{2}(-1)^k}{(k\pi)^2},\quad \forall \ k \geq 1,\,\, \mbox{ \emph{and} }\quad \langle B\varphi_0,\varphi_0\rangle=1/3.
\end{equation*}
\end{oss}
Applying Theorem \ref{teo-contr-B-unb}, it follows that system \eqref{40} is exactly controllable to $\psi_0$.

\subsection{Degenerate parabolic equation with Dirichlet boundary conditions}
Let $T>0$, $I=(0,1)$, $X=L^2(I)$ and consider the following degenerate bilinear control system
\begin{equation}\label{eq-ex-deg-D}
\left\{
\begin{array}{ll}
u_t-\left(x^{\alpha} u_x\right)_x+p(t)\mu(x)u_x=0,& (t,x)\in [0,T]\times I
\vspace{.1cm}\\
u(t,1)=0,\quad\left\{\begin{array}{ll} u(t,0)=0,& \mbox{ if }\alpha\in[0,1)\vspace{.1cm},\\
\left(x^{\alpha}u_x\right)(t,0)=0,& \mbox{ if }\alpha\in[1,2),\end{array}\right.
\vspace{.1cm}\\
u(0,x)=u_0(x),&t\in x\in I.
\end{array}
\right.
\end{equation}
with $\mu(x)=x$ and $\alpha\in[0,2)$ the degeneracy parameter. We recall that when $\alpha\in[0,1)$ problem \eqref{eq-ex-deg-D} is called \emph{weakly degenerate}, while for $\alpha\in[1,2)$ \emph{strongly degenerate}.

Let $\alpha\in[0,1)$ and we define the weighted Sobolev spaces
\begin{equation}\label{sob-sp-w}
\begin{array}{l}
H^1_{\alpha}(I)=\left\{u\in X: u \mbox{ is absolutely continuous on } [0,1], x^{\alpha/2}u_x\in X\right\}\vspace{.1cm}\\
H^1_{\alpha,0}(I)=\left\{u\in H^1_\alpha(I):\,\, u(0)=0,\,\,u(1)=0\right\}\vspace{.1cm}\\
H^2_\alpha(I)=\left\{u\in H^1_\alpha(I): x^{\alpha}u_x\in H^1(I)\right\},
\end{array}
\end{equation}
and the linear operator $A:D(A)\subset X\to X$ by
\begin{equation*}
\left\{\begin{array}{l}
\forall u\in D(A),\quad Au:=-(x^{\alpha}u_x)_x,\vspace{.1cm}\\
D(A):=\{u\in H^1_{\alpha,0}(I),\,\, x^{\alpha}u_x\in H^1(I)\}.
\end{array}\right.
\end{equation*}
For $\alpha\in[1,2)$, we define the spaces
\begin{equation}\label{sob-sp-s}
\begin{array}{l}
H^1_{\alpha}(I)=\left\{u\in X: u \mbox{ is absolutely continuous on } (0,1],\,\, x^{\alpha/2}u_x\in X\right\}\vspace{.1cm}\\
H^1_{\alpha,0}(I):=\left\{u\in H^1_{\alpha}(I):\,\,u(1)=0\right\},\vspace{.1cm}\\
H^2_\alpha(I)=\left\{u\in H^1_\alpha(I):\,\, x^{\alpha}u_x\in H^1(I)\right\}
\end{array}
\end{equation}
and the linear degenerate operator $A:D(A)\subset X\to X$ by
\begin{equation*}
\left\{\begin{array}{l}
\forall u\in D(A),\quad Au:=-(x^{\alpha}u_x)_x,\vspace{.1cm}\\
D(A):=\left\{u\in H^1_{\alpha,0}(I):\,\, x^{\alpha}u_x\in H^1(I)\right\}\vspace{.1cm}\\
\qquad\,\,\,\,\,=\left\{u\in X:\,\,u \mbox{ is absolutely continuous in (0,1] },\,\, x^{\alpha}u\in H^1_0(I),\right.\vspace{.1cm}\\
\qquad\qquad\,\,\,\left.x^{\alpha}u_x\in H^1(I)\mbox{ and } (x^{\alpha}u_x)(0)=0\right\}.
\end{array}\right.
\end{equation*}
In both cases of weak and strong degeneracy it can be proved that $D(A)$ is dense in X and $A:D(A)\subset X\to X$ is a self-adjoint accretive operator (see \cite{cmp} and \cite{cmvn}, respectively). Therefore, $-A$ is the infinitesimal generator of an analytic semigroup of contractions on $X$. Furthermore, it has been showed (see, for instance, \cite[Appendix]{acf}) that $A$ has a compact resolvent. 

Let $\alpha\in [0,2)$ and define 
\begin{equation*}
\nu_\alpha:=\frac{|1-\alpha|}{2-\alpha},\qquad k_\alpha:=\frac{2-\alpha}{2}.
\end{equation*}
Given $\nu\geq0$, we denote by $J_\nu$ the Bessel function of the first kind and order $\nu$ and by $j_{\nu,1}<j_{\nu,2}<\dots<j_{\nu,k}<\dots$ the sequence of all positive zeros of $J_\nu$. It is possible to prove that the eigenvalues and eigenfunctions of $A$ are given by
\begin{equation}\label{13}
\lambda_{\alpha,k}=k^2_{\alpha}j^2_{\nu_\alpha,k},
\end{equation}
\begin{equation}\label{14}
\varphi_{\alpha,k}(x)=\frac{\sqrt{2k_\alpha}}{|J'_{\nu_\alpha}(j_{\nu_\alpha,k})|}x^{(1-\alpha)/2}J_{\nu_\alpha}\left(j_{\nu_\alpha,k}x^{k_\alpha}\right)
\end{equation}
for every $k\in\NN^*$. Moreover, the family $\left(\varphi_{\alpha,k}\right)_{k\in\NN^*}$ is an orthonormal basis of $X$, see \cite{g}.
Hereafter, we shall denote the eigenfunctions $\left(\varphi_{\alpha,k}\right)_{k\in\NN^*}$ by $\left(\varphi_{k}\right)_{k\in\NN^*}$, and the eigenvalues $\left(\lambda_{\alpha,k}\right)_{k\in\NN^*}$ by $\left(\lambda_{k}\right)_{k\in\NN^*}$.

It has been proved (see \cite[page 135, Proposition 7.8]{kl} and \cite[Corollary 1]{cmv2}) that the gap condition is satisfied for all $\alpha\in [0,2)$. More precisely, it is proved that:
\begin{itemize}
\item if $\alpha\in [0,1)$, $\nu_{\alpha}=\frac{1-\alpha}{2-\alpha}\in\left(0,\frac{1}{2}\right]$, the sequence $\left( j_{\nu_\alpha,k+1}-j_{\nu_\alpha,k}\right)_{k\in\NN^*}$ is nondecreasing and converges to $\pi$. Therefore, 
\begin{equation*}
\sqrt{\lambda_{k+1}}-\sqrt{\lambda_k}=k_{\alpha}\left( j_{\nu_\alpha,k+1}-j_{\nu_\alpha,k}\right)\geq k_{\alpha}\left( j_{\nu_\alpha,2}-j_{\nu_\alpha,1}\right)\geq  \frac{7}{16}\pi>0,
\end{equation*}
\item if $\nu_{\alpha}\geq \frac{1}{2}$, the sequence $\left( j_{\nu_\alpha,k+1}-j_{\nu_\alpha,k}\right)_{k\in\NN^*}$ is nonincreasing and converges to $\pi$. Thus,
\begin{equation*}
\sqrt{\lambda_{k+1}}-\sqrt{\lambda_k}=k_{\alpha}\left( j_{\nu_\alpha,k+1}-j_{\nu_\alpha,k}\right)\geq k_{\alpha}\pi\geq \frac{(2 - \alpha)\pi}{2}> 0.
\end{equation*}
\end{itemize}
Therefore, the hypothesis \eqref{gap} is fulfilled in both weak and strong degenerate problems with different constants.
We now define the unbounded linear operator $B$ by
\begin{equation*}
\begin{split}
B: D(B)=H^1(I)\subset X&\to X\\
\varphi&\mapsto \mu(x)\varphi^\prime
\end{split}
\end{equation*}
where we recall that $\mu(x)=x$. We observe that $D(A^{1/2})\subset D(B)$ and
\begin{equation*}
||\varphi||_{D(B)}=||C\mu\varphi'||\leq C||x^{\alpha/2}\varphi'||\leq C ||\varphi||_{D(A^{1/2})},\quad \forall\,\varphi\in D(A^{1/2}).
\end{equation*}
Hence \eqref{BA12} holds true. Finally, we need to prove the validity of \eqref{ipB-suff-cond}. To this purpose, we study the Fourier coefficients of $B\varphi_1$:
\begin{equation*}
\begin{split}
\langle B\varphi_1,\varphi_k\rangle&=\int_0^1x\varphi_1^\prime(x)\varphi_k(x)dx=-\frac{1}{\lambda_k}\int_0^1x\varphi_1^\prime(x)\left(x^\alpha \varphi^\prime_k(x)\right)^\prime dx\\
&=-\frac{1}{\lambda_k}\left[\left.x\varphi_1^\prime(x) x^\alpha \varphi_k^\prime(x)\right|^1_0-\int_0^1(x\varphi_1^\prime)^\prime(x) x^\alpha \varphi_k^\prime(x)dx\right]\\
&=-\frac{1}{\lambda_k}\left[\varphi_1^\prime(1)\varphi_k^\prime(1)-\int_0^1(x\varphi_1^\prime)^\prime(x) x^\alpha \varphi_k^\prime(x)dx\right]
\end{split}
\end{equation*}
where we used \cite[Proposition 2.5, Property (2.14)]{FABCL}) to prove that
$$
\lim_{x \leftarrow 0^+}\big(x\varphi_1^\prime(x) x^\alpha \varphi_k^\prime(x)\big)=0, 
$$
so that, we have
\begin{equation*}
\begin{split}
\langle B\varphi_1,\varphi_k\rangle&=-\frac{1}{\lambda_k}\biggl[\varphi_1^\prime(1)\varphi_k^\prime(1)-\left.\left(x\varphi_1^{\prime}\right)^\prime(x)x^\alpha\varphi_k(x)\right|^1_0\\
&\quad\left.+\int_0^1\Big((x\varphi_1^\prime)^\prime x^\alpha\Big)^{\prime}(x)\varphi_k(x)dx\right]
\end{split}
\end{equation*}
We claim that for all $\alpha \in [0,2)$, the following property holds:
$$
\left.\left(x\varphi_1^{\prime}(x)\right)^{\prime}x^\alpha\varphi_k(x)\right|_{x=0} \mbox{ is defined and is equal to } 0
$$ 
Let  us consider $x \in (0,1)$. We have
\begin{equation}\label{Rvarphi1}
(x\varphi_1^{\prime})^{\prime}(x)=x^{1-\alpha} (x^{\alpha}\varphi_1^{\prime})^{\prime}(x) + (1-\alpha)\varphi_1^{\prime}(x)=-
\lambda_1 x^{1-\alpha}\varphi_1 + (1-\alpha)\varphi_1^{\prime}(x).
\end{equation}
Thus, we obtain
$$
x^{\alpha}(x\varphi_1^{\prime})^{\prime}(x)\varphi_k(x)=-\lambda_1x \varphi_1(x)\varphi_k(x)+ (1-\alpha)(x^{\alpha}\varphi_1^{\prime}(x))\varphi_k(x)
$$
Observe that
$$
\big| x \varphi_1(x)\varphi_k(x) \big| \leqslant \dfrac{1}{2}\big(x\varphi_1^2(x) + x\varphi_k^2(x)\big),\quad \ \forall \ x \in (0,1].
$$
Since $\varphi_1$ and  $\varphi_k$ are in $H^1_{\alpha}(I)$, we can use \cite[Proposition 2.5, Property (2.12)]{FABCL}) in the particular case $a(x)=x^{\alpha}$ for $x \in [0,1]$, and successively with $u=\varphi_1$, and $u=\varphi_k$. Hence, we have 
$$
\lim_{x \leftarrow 0^+}\big(x\varphi_1^2(x) + x\varphi_k^2(x)\big)=0,
$$
so that 
$$
\lim_{x \leftarrow 0^+}\big(x \varphi_1(x)\varphi_k(x)\big)=0.
$$
Since $\varphi_1 \in D(A)$ and  $\varphi_k$ is in $H^1_{\alpha}(I)$, we can use \cite[Proposition 2.5, Property (2.15)]{FABCL}), in the particular case $a(x)=x^{\alpha}$ for $x \in [0,1]$, $\phi= \varphi_k$, and $u=\varphi_1$, noticing in addition that we have $\phi(0)=0$ if
$\alpha \in [0,1)$. Hence, we deduce
$$
\lim_{x \leftarrow 0^+}\big(x^{\alpha}\varphi_1^{\prime}(x))\varphi_k(x)\big)=0.
$$
Therefore, we have proved that for all $\alpha \in [0,2)$
$$
\left.\left(x\varphi_1^{\prime}(x)\right)^{\prime}x^\alpha\varphi_k(x)\right|_{x=0} \mbox{ is defined and is equal to } 0.
$$
We easily prove that since $\varphi_1(1)=\varphi_k(1)=0$, we have
$$
\left.\left(x\varphi_1^{\prime}(x)\right)^{\prime}x^\alpha\varphi_k(x)\right|_{x=1}= 0.
$$
Thus, we obtain that
$$
\left.\left(x\varphi_1^{\prime}\right)^\prime(x)x^\alpha\varphi_k(x)\right|^1_0=0,
$$
so that using this property in our previous computations for $\langle B\varphi_1,\varphi_k\rangle$, we get
\begin{equation*}
\begin{split}
\langle B\varphi_1,\varphi_k\rangle&=-\frac{1}{\lambda_k}\left[\varphi_1^\prime(1)\varphi_k^\prime(1)+\int_0^1\Big((x\varphi_1^\prime)^\prime x^\alpha\Big)^{\prime}(x)\varphi_k(x)dx\right].
\end{split}
\end{equation*}
Now we use the identity \eqref{Rvarphi1} which yields 
\begin{equation*}
\begin{split}
\Big(x^{\alpha}(x\varphi_1^\prime)^\prime\Big)^{\prime}(x)\varphi_k(x)&=\Big(-\lambda_1x\varphi_1(x) +(1-\alpha)x^{\alpha}\varphi_1^{\prime}(x)
\Big)^{\prime}(x)\varphi_k(x)\\
&=\Big[-(2-\alpha)\lambda_1\varphi_1(x)\varphi_k(x) - \lambda_1x\varphi_1^{\prime}(x)\varphi_k(x)\Big].
\end{split}
\end{equation*}
Using this identity in our previous computations and the property that the family $\left(\varphi_{k}\right)_{k\in\NN^*}$ is an orthonormal basis of $X$, we obtain
\begin{equation*}
\langle B\varphi_1,\varphi_k\rangle=-\frac{\varphi_1^\prime(1)\varphi_k^\prime(1)}{\lambda_k-\lambda_1}.
\end{equation*}
We have proved in \cite[page 12, formula (2.43)]{cu} that
\begin{equation*}
\varphi_1^\prime(1)\varphi_k^\prime(1)=\frac{2k_{\alpha}^3j_{\nu_\alpha,1}j_{\nu_\alpha,k}}{|J'_{\nu_\alpha}(j_{\nu_\alpha,1})||J'_{\nu_\alpha}(j_{\nu_\alpha,k})|}J'_{\nu_\alpha}(j_{\nu_\alpha,1})J'_{\nu_\alpha}(j_{\nu_\alpha,k}).
\end{equation*}
The above identity, together with \eqref{13}, imply that there exists a constant $C>0$ such that
\begin{equation*}
|\langle B\varphi_1,\varphi_k\rangle|\geq C\lambda_k^{-1/2},\quad\forall\,k>1.
\end{equation*}
For $k=1$, $\langle B\varphi_1,\varphi_1\rangle$ is given by
\begin{equation*}
\langle B\varphi_1,\varphi_1\rangle=\int_0^1x\varphi_1^\prime(x)\varphi_1(x)dx=\frac{1}{2}\int_0^1x\frac{d}{dx}\left(\varphi_1^2(x)\right)dx=\frac{1}{2}\left[\left.x\varphi_1^2(x)\right|^1_0-\int_0^1\varphi_1^2(x)dx\right]=-\frac{1}{2},
\end{equation*}
where we used once again \cite[Proposition 2.5, Property (2.12)]{FABCL}) with $u=\varphi_1$. Thus, we have $\langle B\varphi_1,\varphi_1\rangle\neq0.$

Therefore, we proved that all the hypothesis of Theorem \eqref{teo-contr-B-unb} are satisfied. Applying such result, we deduce that
for any $T>0$ and initial condition $u_0\in D(A^{1/2})$ sufficiently close to the ground state $\varphi_1$, there exists a control $p\in L^2(0,T)$ that steers the solution of \eqref{eq-ex-deg-D} to $\psi_1(T,x)=e^{-\lambda_1T}\varphi_1(x)$ in time $T$, by the iterative constructive process we set up in our abstract results.

\subsection{Degenerate parabolic equation with Neumann boundary conditions}
In this section we study the controllability of the following degenerate problem
\begin{equation}\label{eq-ex-deg-N}
\left\{
\begin{array}{ll}
u_t-\left(x^{\alpha} u_x\right)_x+p(t)\mu(x)(u_x+u)=0,& (t,x)\in [0,T]\times I\\
(x^\alpha u_x)(t,0)=0,\quad u_x(t,1)=0, & t\in [0,T]\\
u(0,x)=u_0(x),&t\in x\in I.
\end{array}
\right.
\end{equation}
where $T>0$, $I=(0,1)$ and $\mu(x)=x^{2-\alpha}$. The control $p\in L^2(0,T)$ is a real valued function and $\mu$ represents an admissible potential.

Recalling the definitions of the weighted Sobolev spaces $H^1_\alpha(I)$ and $H^2_\alpha(I)$ in \eqref{sob-sp-w} and \eqref{sob-sp-s} for weak and strong degeneracy respectively, we define the second order linear operator
\begin{equation*}
\begin{cases}
\forall\, u \in D(A), \quad  Au:=- (x ^\alpha  u_x)_x , \\
D(A) :=  \{ u \in H^2_{\alpha} (0,1) , (x^\alpha u_x) (0)=0, u_x (1)=0 \} .
\end{cases} 
\end{equation*}
In \cite[Proposition 2.1, Proposition 2.2]{cmu} it is proved that for any $\alpha\in [0,2)$ the operator $A$ is self-adjoint accretive and has a dense domain. Therefore, $-A$ is the infinitesimal generator of an analytic semigroup of contraction. Moreover, in \cite[Proposition 3.1]{cmu} the authors showed that the eigenvalues and eigenfunctions of the operator $A$, when $\alpha\in [0,1)$, are given by
\begin{equation}\label{EXDEGeq-vp-w-vp0}
\lambda _{\alpha,0} = 0,\quad \varphi _{\alpha,0} (x)=1
\end{equation}
and for all $m\geq 1$
\begin{equation}\label{EXDEGeq-vp-w-vpm} 
\lambda _{\alpha,m} = \kappa _\alpha ^2 \, j_{-\nu_\alpha - 1, m} ^2 ,
\end{equation}
\begin{equation}\label{EXDEGeq-fp-w-fpm} 
\varphi _{\alpha,m} (x) = K_{\alpha,m}  x^{\frac{1-\alpha}{2}}  J_{-\nu_\alpha} \left( j_{-\nu_\alpha + 1, m}  x^{\frac{2-\alpha}{2}}\right), 
\end{equation}
where 
$$ \kappa _\alpha := \frac{2-\alpha}{2}, \quad \nu_\alpha := \frac{1-\alpha}{2-\alpha},$$
$J_{-\nu_\alpha}$ is the Bessel$^\prime$s function of order $-\nu_\alpha$, $(j_{-\nu_\alpha + 1, m}) _{m\geq 1}$ are the positive zeros of the Bessel$^\prime$s function $J_{-\nu_\alpha +1}$ and $K_{\alpha,m}$ are positive constants. 

For $\alpha\in[1,2)$, the eigenvalues and eigenfunctions of $A$ are defined by
\begin{equation}\label{EXDEGeq-vp-s-vp0}
\lambda _{\alpha,0} = 0,\quad \varphi _{\alpha,0} (x)=1
\end{equation}
and for all $m\geq 1$
\begin{equation}\label{EXDEGeq-vp-s-vpm} 
\lambda _{\alpha,m} = \kappa _\alpha ^2 \, j_{\nu_\alpha + 1, m} ^2 ,
\end{equation}
\begin{equation}\label{EXDEGeq-fp-s-fpm} 
\varphi _{\alpha,m} (x) = K_{\alpha,m}  x^{\frac{1-\alpha}{2}} J_{\nu_\alpha} \left( j_{\nu_\alpha + 1, m}\,  x^{\frac{2-\alpha}{2}}\right) ,
\end{equation}
where
$$ \kappa _\alpha := \frac{2-\alpha}{2}, \quad \nu_\alpha := \frac{\alpha -1}{2-\alpha},$$
$J_{\nu_\alpha}$ is the Bessel$^\prime$s function of order $\nu_\alpha$, $(j_{\nu_\alpha +1, m}) _{m\geq 1}$ are the positive zeros of the Bessel$^\prime$s function $J_{\nu_\alpha +1}$ and $K_{\alpha,m}$ are positive constants (see \cite[Proposition 3.2]{cmu}).

Observe that, in the same paper \cite[Propositions 3.1 and 3.2]{cmu} it is proved that the eigenvalues $\{\lambda_{\alpha,m}\}_{m\in\NN}$ satisfy the following gap condition
\begin{equation*}
\forall\, \alpha\in [0,2),\,\sqrt{\lambda_{\alpha,m+1}}-\sqrt{\lambda_{\alpha,m}}\geq\frac{2-\alpha}{2}\pi.
\end{equation*}

We introduce the operator $B:D(B)=:H^1(I)\subset X\to X$ defined by
\begin{equation*}
B\varphi=\mu\left(\varphi'+\varphi\right),\quad\forall\varphi\in D(B),
\end{equation*}
where $\mu(x)=x^{2-\alpha}$. In order to have that $D(A^{1/2})\hookrightarrow D(B)$ (that is, hypothesis \eqref{BA12}), we need to restrict the analysis to the case $\alpha\in[0,4/3]$. Indeed,
\begin{equation*}
\norm{B\varphi}=\norm{\mu(\varphi'+\varphi)}\leq C\left(\norm{x^{\alpha/2}\varphi'}+\norm{\varphi}\right)=C\left(\norm{A^{1/2}\varphi}+\norm{\varphi}\right)\leq C\norm{\varphi}_{D(A^{1/2})},\quad\forall \varphi\in D(A^{1/2})
\end{equation*}
is satisfied if and only if $\alpha\in[0,4/3]$.

Furthermore, it is possible to prove that also hypothesis \eqref{ipB-suff-cond} holds true. We compute the scalar product $\langle B\varphi_{\alpha,0},\varphi_{\alpha,m}\rangle$, for all $m\in\NN$:
$$ \langle B\varphi_{\alpha,0} , \varphi _{\alpha,0} \rangle = \int _0 ^1 x^{2-\alpha}  dx = \frac{1}{3-\alpha} ,$$
and, for all $m\geq 1$ we have that
\begin{equation*}
\begin{split}
\langle B\varphi_{\alpha,0} , \varphi _{\alpha,m}\rangle&=\int_0^1\mu(x)\left(\varphi_{\alpha,0}^\prime+\varphi_{\alpha,0}\right)\varphi_{\alpha,m}dx
= \int _0 ^1 \mu (x) \varphi _{\alpha,m}(x)  dx 
\\
&= \frac{1}{\lambda _{\alpha,m}} \int _0 ^1 \mu (x) (-x^\alpha \varphi _{\alpha,m}^\prime)^\prime(x)  dx
\\
&= \frac{1}{\lambda _{\alpha,m}}  \left( [-x^\alpha \mu (x) \varphi _{\alpha,m}^\prime(x) ] _0 ^1 + \int _0 ^1 x^\alpha \mu^\prime(x) \varphi _{\alpha,m}^\prime(x) \right).
\end{split}
\end{equation*}
Recalling that $\mu (x)=x^{2-\alpha}$, we obtain
\begin{equation*}
\begin{split}
\int _0 ^1 x^\alpha \mu^\prime(x) \varphi _{\alpha,m}^\prime (x) 
&= (2-\alpha) \int _0 ^1 x \varphi _{\alpha,m}^\prime (x) 
\\
&= (2-\alpha) \left[x \varphi _{\alpha,m}  (x)\right] _0 ^1 - (2-\alpha) \int _0 ^1 \varphi _{\alpha,m} (x)  dx
\\
&= (2-\alpha) \left[x \varphi _{\alpha,m}  (x)\right] _0 ^1 - (2-\alpha) \langle \varphi _{\alpha,0} , \varphi _{\alpha,m} \rangle  .
\end{split}
\end{equation*}
Since the eigenfunctions are orthogonal, we have that
$$ \langle \varphi _{\alpha,0} , \varphi _{\alpha,m} \rangle  = 0 ,$$
hence
$$ \langle B\varphi_{\alpha,0} , \varphi _{\alpha,m} \rangle 
= \frac{1}{\lambda _{\alpha,m}}  \left( \left[-x^2 \varphi _{\alpha,m}^\prime(x) \right] _0 ^1
+ (2-\alpha) \left[x \varphi _{\alpha,m}  (x)\right] _0 ^1 \right) .$$
From the Neumann boundary conditions satisfied by $\varphi _{\alpha,m}$, we know that
$x \varphi _{\alpha,m}^\prime(x) \to 0$ as $x\to 0$ and $x\to 1$, thus
$$  \left[-x^2 \varphi _{\alpha,m}^\prime(x) \right] _0 ^1 = 0 .$$
Furthermore, in \cite[Lemma 5.1 and Lemma 5.2]{cmu} it is shown that $\varphi _{\alpha,m}$ has a finite limit as $x\to 0$, therefore
$$ x \varphi _{\alpha,m}  (x) \to 0, \quad \text{ as } x \to 0,$$
and moreover  that $\vert \varphi _{\alpha,m} (1)\vert =\sqrt{2-\alpha}$, which implies
$$ \vert (2-\alpha) [x \varphi _{\alpha,m}  (x)] _0 ^1 \vert = (2-\alpha)^{3/2}.$$
Therefore,
$$ \vert \langle B\varphi_{\alpha,0} , \varphi _{\alpha,m} \rangle\vert = \frac{(2-\alpha)^{3/2}}{\lambda _{\alpha,m}} ,$$
that is condition \eqref{ipB-suff-cond} with $q=1$.

By applying Theorem \ref{teo-contr-B-unb}, we conclude that problem \eqref{eq-ex-deg-N}, for $\alpha\in[0,4/3]$, is exactly controllable to the ground state solution $\psi_0\equiv 1$ in any time $T>0$. 

Let us now show an application of Theorem \ref{teoglobal0} to Example 5.4. We recall that $\lambda_0=0$ is the first eigenvalue of $A$, and is associated to the eigenfunction $\varphi_0\equiv 1$. We set
\begin{equation}\label{exactphi0}
\phi_0(t)=\langle u_0,\varphi_1\rangle_{1/2}\; \psi_0(t)=\int_0^1 u_0(x)dx, \quad\forall\, t \geq 0.
\end{equation}
Theorem \ref{teoglobal0} applied to Example 5.4 gives:
\vskip 1mm

for any $R>0$ there exists $T_R>0$ such that for all $u_0\in D(A^{1/2})$ satisfying 
\begin{equation}\label{NV5.4}
\int_0^1 u_0(x)dx \neq 0,
\end{equation}
and 
\begin{equation}\label{cone5.4}
\left(\dfrac{\int_0^1u_0^2(x)dx -\big(\int_0^1u_0(x)dx\big)^2 + \int_0^1x^{\alpha}|u^{\prime}_{0}|^2(x)dx}
{\left|\int_0^1u_0(x)dx\right|}
\right)^{1/2} \leq R
\end{equation}
system \eqref{eq-ex-deg-N} is exactly controllable to $\phi_0$, defined in \eqref{exactphi0}, in time $T_R$.

\section*{Acknowledgement}
P. Cannarsa and C. Urbani acknowledge support from the MIUR Excellence Department Project awarded to the Department of Mathematics, University of Rome Tor Vergata, CUP E83C18000100006.

C. Urbani acknowledges support from Accademia Nazionale dei Lincei, Postdoc scholarship ``Beniamino Segre" and from Università Italo Francese (Vinci Project 2018).

\begin{appendices}
\section{}\label{AppendixB}
In this section we will prove the existence of strong solutions of the stochastic differential equation 
\begin{equation}\label{SDE-Appendix}
\begin{cases}
dX_t=\nu(t,X_t)dt+\sigma(t,X_t)dB_t\\
X(t=0)=X_0
\end{cases}
\end{equation}
under assumptions (H2) which are weaker than assumptions (H1) in \cite[Theorem 9.2]{baldi}. We develop the computations for a drift $\nu$ of the form 
$$
\nu(t,x)=p(t)\mu(x),\,\,t \in [0,T],\, x \in \RR .
$$
with $p\in L^2(0,T)$, which is the case of interest to this paper. A general drift $\nu$ satisfying (H2) can be treated in a similar way.

Let $(\Omega, \mathcal{F},\mathbb{P})$ be a probability space. We recall that the process $(X_t)_{t\in[0,T]}$ is a solution of \eqref{SDE-Appendix} if
\begin{enumerate}
\item $(\Omega, \mathcal{F},(\mathcal{F}_t)_{t\in[0,T]},(B_t)_{t\in[0,T]},\mathbb{P})$ is a standard Brownian motion
\item $(X_t)_{t\in[0,T]}$ is adapted to the filtration $(\mathcal{F}_t)_{t\in[0,T]}$
\item for every $t\in[0,T]$ :
\begin{equation*}
X_t=X_0+\int_0^tp(s)\mu(X_s)ds+\int_0^t \sigma(s,X_s)dB_s.
\end{equation*}
\end{enumerate}
We now prove the following preliminary result which adapts \cite[Theorem 9.1]{baldi} to the current setting.
\begin{thm}\label{teo91}
Let $X$ be a solution of
\begin{equation*}
X_t=X_0+\int_0^tp(s)\mu(X_s)ds+\int_0^t\sigma(s,X_s)dB_s
\end{equation*}
where $\mu,p$ and $\sigma$ satisfy hypotheses 1.,2. and 3. of (H2) (see page 20), and $X_0$ be a $\mathcal{F}_0$-measurable r.v. of $L^2$. Then,
\begin{equation}\label{1estim-app}
\mathbb{E}\left(\sup_{0\leq s\leq T}|X_s|^2\right)\leq c(T,M)(1+\mathbb{E}(|X_0|^2))
\end{equation}
\begin{equation}\label{2estim-app}
\mathbb{E}\left(\sup_{0\leq s\leq t}|X_s-X_0|^2\right)\leq c(T,M)t(1+\mathbb{E}(|X_0|^2)).
\end{equation}
\end{thm}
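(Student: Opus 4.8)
The plan is to establish both bounds by the classical energy method for stochastic differential equations, combining the It\^o isometry, Doob's maximal inequality and Gronwall's lemma; the only novelty with respect to \cite[Theorem 9.1]{baldi} is that the drift coefficient $p(t)\mu(x)$ is merely square-integrable in time, so the sublinear growth of the drift is no longer uniform in $t$. First I would write, for $s\in[0,t]$,
$$
X_s-X_0=\int_0^s p(r)\mu(X_r)\,dr+\int_0^s\sigma(r,X_r)\,dB_r,
$$
and, using $(a+b)^2\leq 2a^2+2b^2$ together with the monotonicity of the supremum, bound $\sup_{0\leq s\leq t}|X_s-X_0|^2$ by twice the square of the drift integral plus twice the supremum of the squared stochastic integral. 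For the drift term the crucial step is to apply the Cauchy--Schwarz inequality \emph{in the time variable}, which exploits exactly hypothesis 2 of (H2), namely $p\in L^2(0,T)$:
$$
\left(\int_0^t|p(r)|\,|\mu(X_r)|\,dr\right)^2\leq \norm{p}_{L^2(0,T)}^2\int_0^t|\mu(X_r)|^2\,dr,
$$
after which hypothesis 3 of (H2), in the form $|\mu(x)|^2\leq 2M^2(1+|x|^2)$, turns this into $2M^2\norm{p}_{L^2(0,T)}^2\int_0^t(1+|X_r|^2)\,dr$. This is precisely where (H2) replaces (H1): the $L^\infty$-in-time control of the drift is traded for an $L^2$-in-time one, and $\norm{p}_{L^2(0,T)}$ is absorbed into the constant $c(T,M)$ (here $p$ is regarded as fixed data).

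For the stochastic term, Doob's maximal inequality followed by the It\^o isometry and the sublinear growth of $\sigma$ gives
$$
\mathbb{E}\left[\sup_{0\leq s\leq t}\Big|\int_0^s\sigma(r,X_r)\,dB_r\Big|^2\right]\leq 4\,\mathbb{E}\int_0^t|\sigma(r,X_r)|^2\,dr\leq 8M^2\int_0^t\big(1+\mathbb{E}|X_r|^2\big)\,dr.
$$
Combining these with the elementary decomposition $|X_s|^2\leq 2|X_0|^2+2|X_s-X_0|^2$ and setting $g(t):=\mathbb{E}[\sup_{0\leq s\leq t}|X_s|^2]$, I would obtain, using $\mathbb{E}|X_r|^2\leq g(r)$, an inequality of the form
$$
g(t)\leq 2\,\mathbb{E}|X_0|^2+C\int_0^t\big(1+g(r)\big)\,dr,
$$
where $C$ depends only on $T$, $M$ and $\norm{p}_{L^2(0,T)}$. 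Gronwall's lemma then yields \eqref{1estim-app}.

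The one genuinely delicate point, and the step I expect to cause the most trouble, is that Gronwall's argument is legitimate only once $g(t)$ is already known to be finite; a priori $g$ could be infinite and the inequality would be vacuous. I would handle this by the standard localization procedure: introduce the stopping times $\tau_n=\inf\{t\geq 0:|X_t|\geq n\}$, run the entire argument for the stopped process $X_{s\wedge\tau_n}$ --- for which all the quantities involved are bounded and the It\^o isometry applies verbatim --- obtain the bound with a constant independent of $n$, and finally let $n\to\infty$ via Fatou's lemma to recover \eqref{1estim-app} for $X$ itself. Once \eqref{1estim-app} is available, the second estimate \eqref{2estim-app} follows at once from the very same computation: one keeps the prefactor $t$ explicit, replaces $1+\mathbb{E}|X_r|^2$ under the integral by its uniform bound $c(T,M)(1+\mathbb{E}|X_0|^2)$ coming from \eqref{1estim-app}, and integrates over $[0,t]$, which produces exactly the desired factor $t$.
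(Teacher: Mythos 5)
Your proposal is correct and follows essentially the same route as the paper's proof: Cauchy--Schwarz in time to exploit $p\in L^2(0,T)$ for the drift, Doob's inequality and the It\^o isometry for the stochastic integral, Gronwall's lemma applied to the process stopped at the exit time from a ball (so that the relevant quantity is finite), and Fatou's lemma to remove the localization; the treatment of \eqref{2estim-app} by reinserting the uniform bound \eqref{1estim-app} is also the standard argument the paper delegates to Baldi. The only cosmetic difference is that the paper works with the stopped process from the very first line, whereas you present the formal estimate first and then patch it with the stopping-time argument --- which you correctly identify as the delicate step.
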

\begin{proof}
We follow the proof of \cite[Theorem 9.1]{baldi}. Fixed any $R>0$, we define $X_R(t):=X_{t\land \tau_R}$ where $\tau_R:=\inf\{t\,:\, 0\leq t\leq T,\, |X_t|\geq R\}$ is the exit time of the process $X$ from the open ball of radius $R$. If $|X_t|<R$ for every $t\in[0,T]$ then we set $\tau_R=T$. We have that
\begin{equation*}
\begin{split}
X_R(t)&=X_0+\int_0^{t\land \tau_R}p(r)\mu(X_r)dr+\int_0^{t\land \tau_R}\sigma(r,X_r)dB_r\\
&=X_0+\int_0^{t}p(r)\mu(X_r)\mathbbm{1}_{\{r<\tau_R\}}dr+\int_0^{t}\sigma(r,X_r)\mathbbm{1}_{\{r<\tau_R\}}dB_r\\
&=X_0+\int_0^{t}p(r)\mu(X_R(r))\mathbbm{1}_{\{r<\tau_R\}}dr+\int_0^{t}\sigma(r,X_R(r))\mathbbm{1}_{\{r<\tau_R\}}dB_r.
\end{split}
\end{equation*}
Therefore, we get
\begin{equation}\label{EsupX2}
\begin{split}
\mathbb{E}\left[\sup_{0\leq s\leq t}|X_R(s)|^2\right]&\leq 3\mathbb{E}\left[|X_0|^2\right]+3\mathbb{E}\left[\sup_{0\leq s\leq t}\left|\int_0^sp(r)\mu(X_R(r))\mathbbm{1}_{\{r<\tau_R\}}rd\right|^2\right]\\
&\quad+3\mathbb{E}\left[\sup_{0\leq s\leq t}\left|\int_0^s\sigma(r,X_R(r))\mathbbm{1}_{\{r<\tau_R\}}\right|^2\right],
\end{split}
\end{equation}
where we have used the inequality
\begin{equation*}
|x_1+\dots+x_m|^p\leq m^{p-1}(|x_1|^p+\dots+|x_m|^p)
\end{equation*}
which holds, in general, for any $x_1,\dots,x_m\in\RR^d$.

By H{\"o}lder's inequality and hypothesis 3. we obtain
\begin{equation}\label{Esuppmu2}
\begin{split}
\mathbb{E}\left[\sup_{0\leq s\leq t}\left|\int_0^sp(r)\mu(X_R(r))\mathbbm{1}_{\{r<\tau_R\}}dr\right|^2\right]&\leq \mathbb{E}\left[\sup_{0\leq s\leq t}\left(\left(\int_0^s|p(r)|^2dr\right)\left(\int_0^s|\mu(X_R(r))|^2\mathbbm{1}_{\{r<\tau_R\}}rd\right)\right)\right]\\
&\leq \norm{p}^2_{L^2(0,T)}M^2\mathbb{E}\left[\sup_{0\leq s\leq t}\int_0^s(1+|X_R(r)|)^2\mathbbm{1}_{\{r<\tau_R\}}dr\right].
\end{split}
\end{equation}
Whereas, by the $L^2$ inequalities for stochastic integrals (see \cite[Proposition 8.4]{baldi}) we deduce 
\begin{equation}\label{Esupsigma2}
\mathbb{E}\left[\sup_{0\leq s\leq t}\left|\int_0^s\sigma(r,X_R(r))\mathbbm{1}_{\{r<\tau_R\}}\right|^2\right]\leq c_2\mathbb{E}\left[\int_0^t|\sigma(r,X_R(r))|^2dr\right]\leq c_2M^2\mathbb{E}\left[\int_0^t\left(1+|X_R(r)|\right)^2dr\right],
\end{equation}
where in the last inequality we have used again hypothesis 3. Now, plugging \eqref{Esuppmu2} and \eqref{Esupsigma2} into \eqref{EsupX2}, we get
\begin{equation*}
\begin{split}
\mathbb{E}\left[\sup_{0\leq s\leq t}|X_R(s)|^2\right]&\leq 3\mathbb{E}\left[|X_0|^2\right]+3M^2\left(\norm{p}^2_{L^2(0,T)}+c_2\right)2\mathbb{E}\left[T^2+\int_0^t|X_R(r)|^2dr\right]\\
&\leq c_1(T,M)\left(1+\mathbb{E}\left[|X_0|^2\right]\right)+c_2(T,M)\int_0^t\mathbb{E}\left[|X_R(r)|^2\right]dr.
\end{split}
\end{equation*}
If we set $v(t):=\mathbb{E}\left[\sup_{0\leq s\leq t}|X_R(s)|^2\right]$, from the above estimate we obtain
\begin{equation*}
v(t)\leq c_1(T,M)\left(1+\mathbb{E}\left[|X_0|^2\right]\right)+c_2(T,M)\int_0^tv(r)dr.
\end{equation*}
Observe that, since $|X_R(t)|=|X_0|$ if $|X_0|>R$ and $|X_R(t)|\leq R$ otherwise, it holds that $|X_R(t)|\leq \max\{|X_0|,R\}$ and then $v(t)\leq \mathbb{E}\left[\max\{|X_0|^2,R^2\}\right]<+\infty$. Thus, $v$ is bounded and we can apply Gronwall's inequality:
\begin{equation*}
v(T)=\mathbb{E}\left[\sup_{0\leq s\leq T}|X_R(s)|^2\right]\leq c_1(T,M)\left(1+\mathbb{E}\left[|X_0|^2\right]\right)e^{Tc_2(T,M)}=c(T,M)\left(1+\mathbb{E}\left[|X_0|^2\right]\right).
\end{equation*}
Since the right-hand side does not depend on $R$, we can take the limit $R\to+\infty$.

Let us prove that $\tau_R\to T$ as $R\to+\infty$. Since $X$ is continuous, $\displaystyle\sup_{0\leq t\leq \tau_R}|X_t|^2=R^2$ on $\{\tau_R<T\}$. Therefore, we have that
\begin{equation*}
\mathbb{E}\left[\sup_{0\leq t\leq \tau_R}|X_t|^2\right]\geq R^2\mathbb{P}(\tau_R<T)
\end{equation*}
so that
\begin{equation*}
\mathbb{P}(\tau_R<T)\leq \frac{1}{R^2}\mathbb{E}\left[\sup_{0\leq t\leq \tau_R}|X_t|^2\right]\leq \frac{c(T,M)(1+\mathbb{E}(|X_0|^2)}{R^2}.
\end{equation*}
Hence, $\mathbb{P}(\tau_R<T)\to 0$ as $R\to+\infty$. Since $R\to \tau_R$ is increasing, $\displaystyle\lim_{R\to +\infty}\tau_R=T$ a.s. and 
\begin{equation*}
\sup_{0\leq s\leq T}|X_R(s)|^2\to\sup_{0\leq s\leq T}|X_s|^2\quad{a.s.,}\quad{for}\quad R\to+\infty.
\end{equation*}
By applying Fatou's lemma we conclude that \eqref{1estim-app} holds true. We refer to \cite[Theorem 9.1]{baldi} for the proof of \eqref{1estim-app}.
\end{proof}
Let $a,b\in\RR$ such that $a\leq b$. We recall the definition of the spaces $M^p([a,b])$, with $p\geq1$. First, we define the space $M^p_{loc}([a,b])$ as the space of the equivalence classes of real-valued \emph{progressively measurable} processes $X=(\Omega, \mathcal{F},(\mathcal{F}_t)_{a\leq t\leq b},(X_t)_{a\leq t\leq b},\mathbb{P})$ such that
\begin{equation*}
\int_a^b|X_s|^pds<+\infty\qquad \text{a.s.}
\end{equation*}
Then, by $M^p([a,b])$ we denote the subspace of $M^p_{loc}([a,b])$ of the processes such that
\begin{equation*}
\mathbb{E}\left[\int_a^b|X_s|^pds\right]<+\infty.
\end{equation*}
\begin{thm}
Let $X_0$ be a real-valued r.v., $\mathcal{F}_0$ measurable and square integrable. Then, under assumption (H2) (see page 20) there exists $X\in M^2([0,T])$ that satisfies
\begin{equation}\label{sol-sde}
X_t=X_0+\int_0^tp(t)\mu(X_s)ds+\int_0^t\sigma(s,X_s)dB_s.
\end{equation}
Moreover, if $X'$ is another solution of \eqref{sol-sde}, then
\begin{equation}
\mathbb{P}(X_t=X'_t\text{ for every }t\in[0,T])=1
\end{equation}
(pathwise uniqueness).
\end{thm}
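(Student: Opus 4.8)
The plan is to follow the classical Picard iteration scheme for stochastic differential equations, as in the proof of \cite[Theorem 9.2]{baldi}, but with the a priori bounds of Theorem \ref{teo91} replacing the ones valid under (H1), so as to accommodate a drift coefficient $p$ that is merely square integrable in time. First I would define the successive approximations $X^{(0)}_t:=X_0$ and
\[
X^{(n+1)}_t:=X_0+\int_0^t p(s)\mu(X^{(n)}_s)\,ds+\int_0^t\sigma(s,X^{(n)}_s)\,dB_s,\qquad n\geq0.
\]
An induction based on Theorem \ref{teo91} (applied to each iterate, whose coefficients satisfy hypotheses 1.--3. of (H2)) guarantees that every $X^{(n)}$ belongs to $M^2([0,T])$ and has continuous paths; indeed the drift integral is absolutely continuous in $t$ since, by Cauchy--Schwarz, $\int_0^T|p(s)\mu(X^{(n)}_s)|\,ds\leq\norm{p}_{L^2(0,T)}\big(\int_0^T|\mu(X^{(n)}_s)|^2\,ds\big)^{1/2}<+\infty$ almost surely.

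Next I would estimate the increments $X^{(n+1)}-X^{(n)}$. Setting $g_n(t):=\mathbb{E}\big[\sup_{0\leq s\leq t}|X^{(n)}_s-X^{(n-1)}_s|^2\big]$, I split the difference into its drift and diffusion parts. For the drift, the decisive step is to apply Cauchy--Schwarz in the time variable so that the whole $L^2$-norm of $p$ is absorbed into a constant:
\[
\Big|\int_0^s p(r)\big(\mu(X^{(n)}_r)-\mu(X^{(n-1)}_r)\big)\,dr\Big|^2\leq \norm{p}_{L^2(0,T)}^2\,L^2\int_0^s|X^{(n)}_r-X^{(n-1)}_r|^2\,dr,
\]
using the Lipschitz continuity of $\mu$ from hypothesis 4. of (H2). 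For the diffusion part, Doob's inequality together with the $L^2$ estimate for stochastic integrals \cite[Proposition 8.4]{baldi} and the Lipschitz continuity of $\sigma$ give a bound of the same form. Taking expectations and suprema, these two estimates combine into the recursive inequality $g_{n+1}(t)\leq C\int_0^t g_n(r)\,dr$, with $C=2L^2\big(\norm{p}_{L^2(0,T)}^2+c_2\big)$.

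Iterating this relation yields the factorial decay $g_{n+1}(T)\leq C^n T^n g_1(T)/n!$, so that $\sum_n g_n(T)^{1/2}<+\infty$ and the sequence $(X^{(n)})_n$ is Cauchy in the complete space of continuous adapted processes endowed with the norm $\big(\mathbb{E}[\sup_{0\leq t\leq T}|\cdot|^2]\big)^{1/2}$. Its limit $X$ lies in $M^2([0,T])$, has continuous paths, and, upon passing to the limit in the integral identity (the stochastic integral being continuous for this norm by \cite[Proposition 8.4]{baldi}, and the drift integral by the Cauchy--Schwarz bound above), satisfies \eqref{sol-sde}. Pathwise uniqueness then follows by running the same two estimates on the difference $X-X'$ of any two solutions: writing $h(t):=\mathbb{E}[\sup_{0\leq s\leq t}|X_s-X'_s|^2]$ one obtains $h(t)\leq C\int_0^t h(r)\,dr$, whence $h\equiv0$ by Gronwall's lemma, and the conclusion follows since both solutions have continuous trajectories.

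The only genuinely new point, compared with the classical argument under (H1), is the treatment of the drift: boundedness of $p$ is never available, so the contraction estimate must be closed using only $p\in L^2(0,T)$. The Cauchy--Schwarz step above is exactly what makes this possible, isolating $\norm{p}_{L^2(0,T)}^2$ as a harmless multiplicative constant in front of a clean time integral that feeds into the Gronwall iteration; this is where I expect the main care to be required, the remaining steps being routine adaptations of \cite[Theorem 9.2]{baldi}.
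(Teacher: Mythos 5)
Your proposal is correct and follows essentially the same route as the paper's own proof: Picard iteration, with Cauchy--Schwarz in time absorbing $\norm{p}_{L^2(0,T)}^2$ into the drift estimate, Doob's inequality plus the Lipschitz bound for the stochastic integral, factorial decay of the increments, and Gronwall's lemma for pathwise uniqueness. The only deviation is a routine one: where you invoke completeness of the space of continuous adapted processes under the norm $\left(\mathbb{E}\left[\sup_{t\in[0,T]}|\cdot|^2\right]\right)^{1/2}$ to get the limit and its membership in $M^2([0,T])$ at once, the paper obtains almost sure uniform convergence via Markov's inequality and Borel--Cantelli, passes to the limit by dominated convergence, and then recovers $X\in M^2([0,T])$ from Theorem \ref{teo91}.
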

\begin{proof}
We follow the proof of \cite[Theorem 9.2]{baldi}. We define recursively a sequence of processes by $X_0(t)=X_0$ and 
\begin{equation*}
X_{m+1}(t)=X_0+\int_0^tp(s)\mu(X_m(s))ds+\int_0^t\sigma(s,X_m(s))dB_s.
\end{equation*}
Our aim is to show that the sequence $(X_m)_m$ converges uniformly on $[0,T]$ to a process $X$ which is solution of \eqref{sol-sde}.

We first prove by induction that
\begin{equation}\label{induction-appA}
\mathbb{E}\left[\sup_{0\leq r\leq t}|X_{m+1}(r)-X_m(r)|^2\right]\leq \frac{(Rt)^{m+1}}{(m+1)!}
\end{equation}
with $R:=2(\norm{p}^2_{L^2(0,T)}+4)\max\{2M^2(1+\mathbb{E}[|X_0|^2]),L^2\}$, where $L,M>0$ are the constants in hypotheses 3. and 4..

For $m=0$ we have that
\begin{equation*}
\begin{split}
\sup_{0\leq r\leq t}|X_1(r)-X_0|^2&\leq 2\sup_{0\leq r\leq t}\left|\int_0^rp(s)\mu(X_0)ds\right|^2+2\sup_{0\leq r\leq t}\left|\int_0^r\sigma(s,X_0)dB_s\right|^2\\
&\leq 2\norm{p}^2_{L^2(0,T)}\sup_{0\leq r\leq t}\int_0^r\left|\mu(X_0)\right|^2ds+2\sup_{0\leq r\leq t}\left|\int_0^r\sigma(s,X_0)dB_s\right|^2\\
&\leq 4M^2\norm{p}^2_{L^2(0,T)}\left(1+|X_0|\right)^2t+2\sup_{0\leq r\leq t}\left|\int_0^r\sigma(s,X_0)dB_s\right|^2
\end{split}
\end{equation*}
where we have applied H{\"o}lder's inequality and hypothesis 3..

By using Doob's maximal inequality (see \cite[formula (7.23), page 195]{baldi}) and hypothesis 3. of (H2) it is possible to prove that
\begin{equation*}
\mathbb{E}\left[\sup_{0\leq r\leq t}\left|\int_0^r\sigma(s,X_s)dB_s\right|^2\right]\leq 4\mathbb{E}\left[\int_0^t|\sigma(s,X_s)|^2ds\right]\leq 8M^2\left(t+\mathbb{E}\left[\int_0^t|X_s|^2ds\right]\right).
\end{equation*}
Hence, we get that
\begin{equation*}
\begin{split}
\mathbb{E}\left[\sup_{0\leq r\leq t}|X_1(r)-X_0|^2\right]&\leq 4M^2\norm{p}_{L^2(0,T)}^2\left(1+\mathbb{E}\left[|X_0|^2\right]\right)t+16M^2\left(t+t\mathbb{E}\left[|X_0|^2\right]\right)\\
&=4M^2(\norm{p}^2_{L^2(0,T)}+4)\left(1+\mathbb{E}\left[|X_0|^2\right]\right)t \leq Rt.
\end{split}
\end{equation*}
Now, suppose that \eqref{induction-appA} holds till index $m-1$ and let us prove it for $m$. Observe that, thanks to H{\"o}lder's inequality and hypothesis 4., it holds
\begin{equation*}
\begin{split}
\sup_{0\leq r\leq t}|X_{m+1}(r)-X_m(r)|^2&\leq 2\sup_{0\leq r\leq t}\left|\int_0^rp(s)\left[\mu(X_{m}(s))-\mu(X_{m-1}(s))\right]ds\right|^2\\
&\quad+2\sup_{0\leq r\leq t}\left|\int_0^r\left[\sigma(s,X_{m}(s))-\sigma(s,X_{m-1}(s))\right]dB_s\right|^2\\
&\leq 2\norm{p}_{L^2(0,T)}\sup_{0\leq r\leq t}\int_0^r\left|\mu(X_{m}(s))-\mu(X_{m-1}(s))\right|^2ds\\
&\quad+2\sup_{0\leq r\leq t}\left|\int_0^r\left[\sigma(s,X_{m}(s))-\sigma(s,X_{m-1}(s))\right]dB_s\right|^2\\
&\leq 2L^2\norm{p}_{L^2(0,T)}\sup_{0\leq r\leq t}\int_0^r\left|X_{m}(s)-X_{m-1}(s)\right|^2ds\\
&\quad+2\sup_{0\leq r\leq t}\left|\int_0^r\left[\sigma(s,X_{m}(s))-\sigma(s,X_{m-1}(s))\right]dB_s\right|^2.
\end{split}
\end{equation*}
We now compute the expected value
\begin{equation*}
\begin{split}
\mathbb{E}\left[\sup_{0\leq r\leq t}|X_{m+1}(r)-X_m(r)|^2\right]&\leq 2L^2\norm{p}_{L^2(0,T)}\int_0^t\mathbb{E}\left[\left|X_{m}(s)-X_{m-1}(s)\right|^2\right]ds\\
&\quad+2\mathbb{E}\left[\sup_{0\leq r\leq t}\left|\int_0^r\left[\sigma(s,X_{m}(s))-\sigma(s,X_{m-1}(s))\right]dB_s\right|^2\right].
\end{split}
\end{equation*}
By using again Dobb's inequality, hypothesis 4. and the inductive step, we have
\begin{equation*}
\begin{split}
\mathbb{E}\left[\sup_{0\leq r\leq t}|X_{m+1}(r)-X_m(r)|^2\right]&\leq 2L^2\norm{p}_{L^2(0,T)}\int_0^t\mathbb{E}\left[\left|X_{m}(s)-X_{m-1}(s)\right|^2\right]ds\\
&\quad+8\mathbb{E}\left[\int_0^t\left|\sigma(s,X_{m}(s))-\sigma(s,X_{m-1}(s))\right|^2ds\right]\\
&\leq 2L^2\norm{p}_{L^2(0,T)}\int_0^t\mathbb{E}\left[\left|X_{m}(s)-X_{m-1}(s)\right|^2\right]ds\\
&\quad+8L^2\int_0^t\mathbb{E}\left[\left|X_{m}(s)-X_{m-1}(s)\right|^2\right]ds\\
&\leq 2L^2\left(\norm{p}^2_{L^2(0,T)}+4\right)\int_0^t\frac{(Rs)^m}{m!}ds\leq \frac{(Rt)^{m+1}}{(m+1)!}.
\end{split}
\end{equation*}
Thus, the proof of \eqref{induction-appA} is completed.

Now we apply Markov's inequality which gives
\begin{equation*}
\mathbb{P}\left(\sup_{0\leq t\leq T}|X_{m+1}(t)-X_m(t)|>\frac{1}{2^m}\right)\leq 2^{2m}\mathbb{E}\left[\sup_{0\leq t\leq T}|X_{m+1}(t)-X_m(t)|^2\right]\leq 2^{2m}\frac{(RT)^{m+1}}{(m+1)!}.
\end{equation*}
Since the left-hand side of the above inequality is summable, by the Borel-Cantelli lemma we get
\begin{equation*}
\mathbb{P}\left(\sup_{0\leq t\leq T}|X_{m+1}(t)-X_m(t)|>\frac{1}{2^m}\text{ for infinitely many indices $m$}\right)=0,
\end{equation*}
that is, for almost every $\omega$ we eventually have
\begin{equation*}
\sup_{0\leq t\leq T}|X_{m+1}(t)-X_m(t)|\leq \frac{1}{2^m}
\end{equation*}
and therefore, for fixed $\omega$ the series
\begin{equation*}
X_0+\sum_{k=0}^{m-1}|X_{k+1}(t)-X_k(t)|=X_m(t)
\end{equation*}
converges uniformly on $[0,T]$ a.s. Let $X_t=\displaystyle\lim_{m\to+\infty} X_m(t)$. Then, $X$ is continuous, being the uniform limit of continuous processes, and therefore $X\in M^2_{loc}([0,T])$.

We now prove that $X$ is solution of \eqref{sol-sde}. Recall that
\begin{equation}\label{sol-sde2}
X_{m+1}(t)=X_0+\int_0^tp(s)\mu(X_m(s))ds+\int_0^t\sigma(s,X_m(s))dB_s.
\end{equation}
Then, the left-hand side converges uniformly to $X$. From hypothesis 3. of (H2) we get that
\begin{equation*}
|p(t)||\mu(X_{m}(t))-\mu(X_t)|\leq L|p(t)|\left|X_{m}(t)-X_t\right|.
\end{equation*}
By Lebesgue's dominated convergence theorem, we deduce that
\begin{equation*}
\int_0^tp(s)\mu(X_m(s))ds\to\int_0^t p(s)\mu(X_s)ds\quad\text{a.s.}\quad\quad\text{as}\quad m\to+\infty.
\end{equation*}
Moreover, since by hypothesis 3. of (H2)
\begin{equation*}
|\sigma(s,X_m(s))-\sigma(s,X_s)|\leq L|X_m(s)-X_s|
\end{equation*}
we deduce that, uniformly on $[0,T]$ a.s.,
\begin{equation*}
\lim_{m\to+\infty} \sigma(t,X_m(t))=\lim_{m\to+\infty}\sigma(t,X_m)
\end{equation*}
and therefore
\begin{equation*}
\int_0^t\sigma(s,X_m(s))ds\to\int_0^t \sigma(X_s)ds\quad\text{a.s.}\quad\text{as}\quad m\to+\infty.
\end{equation*}
Since a.s. convergence implies convergence in probability, we can take the limit in probability in \eqref{sol-sde2} and obtain that
\begin{equation*}
X_t=X_0+\int_0^tp(s)\mu(X_s)ds+\int_0^t\sigma(s,X_s)dB_s,
\end{equation*}
that is, $X$ is solution of the stochastic differential equation. Moreover, $X\in M^2([0,T])$ thanks to Theorem \ref{teo91}.

We now prove uniqueness. Let $X_1$, $X_2$ be two solutions of \eqref{sol-sde}. Then, we have
\begin{equation*}
|X_1(t)-X_2(t)|\leq \left|\int_0^tp(s)\left(\mu(X_1(s))-\mu(X_2(s)\right)ds\right|+\left|\int_0^t\left(\sigma(s,X_1(s))-\sigma(s,X_2(s))\right)dB_s\right|.
\end{equation*}
We compute the mean value of the supremum over $[0,t]$ of the difference of the solutions
\begin{equation*}
\begin{split}
\mathbb{E}\left[\sup_{0\leq r\leq t}\left|X_1(r)-X_2(r)\right|^2\right]&\leq 2\mathbb{E}\left[\sup_{0\leq r\leq t}\left|\int_0^rp(s)\left(\mu(X_1(s))-\mu(X_2(s)\right)ds\right|^2\right]\\
&\quad+2\mathbb{E}\left[\sup_{0\leq r\leq t}\left|\int_0^t\left(\sigma(s,X_1(s))-\sigma(s,X_2(s))\right)dB_s\right|^2\right]\\
&\leq 2\norm{p}^2_{L^2(0,T)}\mathbb{E}\left[\int_0^t\left|\mu(X_1(s))-\mu(X_2(s)\right|^2ds\right]\\
&\quad +8\mathbb{E}\left[\int_0^t\left|\sigma(s,X_1(s))-\sigma(s,X_2(s))\right|^2ds\right]\\
&\leq \left(2L^2\norm{p}_{L^2(0,T)}^2+8L^2\right)\int_0^t\mathbb{E}\left[\left|X_1(s)-X_2(s)\right|^2\right]ds.
\end{split}
\end{equation*}
If we set $v(t)=\sup_{0\leq r\leq t}\left|X_1(r)-X_2(r)\right|^2$, then $v$ is bounded thanks to Theorem \ref{teo91} and it from the above inequality we deduce 
\begin{equation*}
v(t)\leq C\int_0^t v(s)ds,\qquad 0\leq t\leq T
\end{equation*}
with $C:=2L^2\left(\norm{p}_{L^2(0,T)}^2+4\right)$. From Gronwall's inequality we conclude that $v\equiv 0$ on $[0,T]$. Thus, the two solutions $X_1$ and $X_2$ coincide.
\end{proof}
\end{appendices}

\bibliographystyle{plain}
\bibliography{biblio}
\end{document}